\def\deg{{\rm deg}}
\newtheorem{theorem}{{\bf Theorem}}
\newtheorem{remark}{{\bf Remark}}
\newtheorem{definition}[theorem]{{\bf Definition}}
\newtheorem{corollary}[theorem]{{\bf Corollary}}
\newtheorem{proposition}[theorem]{{\bf Proposition}}
\newtheorem{lemma}[theorem]{{\bf Lemma}}
\newtheorem{example}{{\bf Example}}
\def\bfx{\boldsymbol{x}}
\def\OOO{\mathcal{O}}
\def\tOOO{\tilde{\mathcal{O}}}
\begin{document}

 \begin{frontmatter}
\title{A new method to compute the singularities of offsets to rational plane curves.}


\author[a]{Juan Gerardo Alc\'azar\fnref{proy,proy2}}
\ead{juange.alcazar@uah.es}
\author[b]{Jorge Caravantes\fnref{proy}}
\ead{jcaravan@mat.ucm.es}
\author[c]{ Gema M. Diaz-Toca\fnref{proy,proy3}}
\ead{gemadiaz@um.es}

\address[a]{Departamento de F\'{\i}sica y Matem\'aticas, Universidad de Alcal\'a,
E-28871 Madrid, Spain}
\address[b]{Departamento de \'Algebra, Universidad Complutense de Madrid, E-28040 Madrid, Spain}
\address[c]{Departamento de Matem\'atica Aplicada, Universidad de Murcia,  E-30100 Murcia, Spain}

 \fntext[proy]{
 Supported by the Spanish ``Ministerio de
 Econom\'ia y Competitividad" under the Project MTM2014-54141-P. }
 
 \fntext[proy2] {Member of the Research Group {\sc asynacs} (Ref. {\sc ccee2011/r34}) }

 \fntext[proy3]{Supported by the Research Group {\sc E078-04} of the University of Murcia}


\begin{abstract}
Given a planar curve defined by means of a real rational parametrization, we prove that the affine values of the parameter generating the real singularities of the offset are real roots of a univariate polynomial that can be derived from the parametrization of the original curve, without computing or making use of the implicit equation of the offset. By using this result, a finite set containing all the real singularities of the offset, and in particular all the real self-intersections of the offset, can be computed. We also report on experiments carried out in the computer algebra system Maple, showing the efficiency of the algorithm for moderate degrees. 
\end{abstract}

  \end{frontmatter}

\section{Introduction}\label{section-introduction}

Intuitively, the \emph{offset curves} to a given curve are ``parallel" curves to the original curve, called the \emph{generator} curve, at a certain distance. The offsetting operation is important in Computer Aided Geometric Design (CAGD), because it can be used to give ``thickness" to an object, in this case a curve, which is thin; the offsetting distance can be regarded as the desired ``thickness" of the new object. Furthermore, offsets also have applications in fields like robotics or manufacturing \cite{Farouki}, \cite{Patrikalakkis}. 

When we compute the offset of a curve, typically we want to reproduce in the offset the shape of the generator. However, sometimes the offsetting operation introduces singularities that destroy the topology of the original curve. Hence, one has to identify the parts of the offset that should be discarded, and trim them away in a post-processing step. For example, in Figure \ref{fig:parab} (left) we can see the parabola $y=x^2$ (in thin line) and its offset at distance $d=1$ (in thick line). We observe that while one of the connected components of the offset has the topology of the parabola, the other component has a different topology. This last component has one self-intersection and two cusps. After trimming away the loop containing the three singularities (see Figure \ref{fig:parab}, right), the two components of the new curve that we get, called the \emph{trimmed offset}, have the topology of the original curve. Furthermore, in Section 4 of \cite{Far1} it is proven that the trimmed offset can be easily computed whenever the parameter values of the self-intersections of the offset, or even a finite set containing them, are known. So the computation of the self-intersections of the offset is strongly related with the trimming operation.

Trimming is important in computer aided design to keep the original shape and therefore improve the appearance of the image, but also in manufacturing. More precisely, if a certain curve is to be machined by the cylindrical cutter of a milling machine, the cutter follows a certain-line trajectory specified by the offset, where the offsetting distance equals the cutter radius (see page 162 of \cite{Farouki}). However, if the offset has self-intersections then the offset will have loops (like the one in the offset to the parabola, see Figure \ref{fig:parab}, left), giving rise to the problem of ``gouging", as it is called in NC machining (see Section 11 in \cite{Patrikalakkis}). Essentially, gouging implies that these small loops must be removed later by using a smaller size cutter.

\begin{figure}
\begin{center}
$\begin{array}{cc}
\includegraphics[scale=0.3]{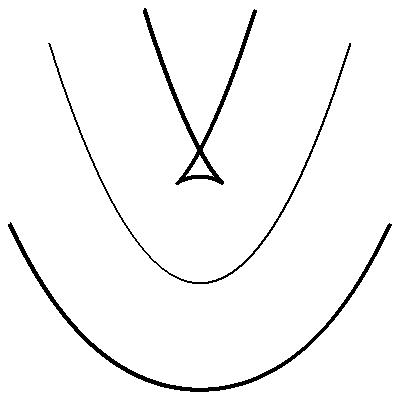} & \includegraphics[scale=0.3]{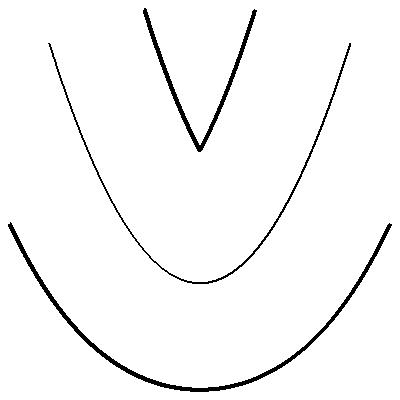}
\end{array}$
\end{center}
\caption{Offset (left), and trimmed offset (right) of the parabola at the distance $d=1$.}\label{fig:parab}
\end{figure}

 In this paper we deal with the problem of computing the singularities of the offset to a generator curve given by means of a rational parametrization, as it is common in CAGD. More precisely, we are interested in computing the affine values of the parameter giving rise to the offset singularities. In this context, a first difficulty is the fact that the offset does not need to be rational; in fact, if the offset is rational then the computation of its singularities is relatively easy, and can be done for instance by using the method in \cite{PD07}. Since the offset to an algebraic curve is also algebraic, one might try to overcome the aforementioned difficulty by working with the implicit equation of the offset, in order to derive the singularities of the offset from there. Nevertheless, the offsetting process causes kind of an ``explosion", so that the implicit equation of the offset is much more complicated than the original one, even for very simple curves. Therefore, deriving the singularities of the offset from its implicit equation is very time-consuming and quite often impossible in practice. 

As a consequence, many papers in the literature have proposed methods to derive the singularities of the offset by computing them directly from the generator curve, or by approximating the offset by another object which is easier to manipulate. In this sense, one should distinguish between ``local" singularities, i.e. singularities, like for instance cusps, that are due to local phenomena, and self-intersections, which are due to the intersection of different branches of the offset. Local singularities of offsets to rational curves are easy to find, since the values of the parameter $t$ generating them are the solutions of the equation $k(t)=-1/d$, where $d$ is the offsetting distance and $k(t)$ is the curvature of the curve. For singularities of this type coming from regular points of the generator, one can see Section 2.5 of \cite{Far1}. In fact, the results in \cite{Far1} are applicable not only to rational curves, but to parametric curves defined by means of a {\it regular} parametrization, i.e. a parametrization where the speed vector does not vanish. For the analysis of local offset singularities coming from singular points of the generator of an algebraic curve, one can see \cite{AS07}. However, finding the self-intersections of the offset is a difficult problem. 

In \cite{Far2} and \cite{Maekawa}, the self-intersections are directly derived from the generator. In \cite{Far2}, this is done for the case where the initial curve admits a polynomial parametrization. In order to do this, the parameter values generating the self-intersections are proven to be the roots of a polynomial which is the quotient of a big determinant and a product of two polynomials. However, the generalization to the case when the generator admits a rational, non-polynomial, parametrization is cumbersome. 

In \cite{Maekawa}, the self-intersections of the offset are computed by solving a system in four variables and four equations, also derived from the generator curve. The idea is also applicable to a general regular parametrization, not necessarily rational. Nevertheless, in that case one has to deal with the numerical problem of approaching all the solutions of a nonlinear system. Notice that if a non-rational parametrization is used, the system is not necessarily algebraic. However, if the parametrization of the original curve is polynomial, the system is certainly algebraic. In this case, by writing the parametrization of the original form in Bezier form, in turn one can write the equations of the system as Bernstein polynomial equations. Then the solution of the system is reduced to finding the intersection of two bivariate Bezier patches with a certain plane. In order to do this, de Casteljau subdivision methods coupled with rounded interval arithmetic are involved. The generalization to rational curves, though stated to be feasible, is mentioned as a topic of future research. 

A second possibility, that has been explored by many authors, is to approximate the offset by means of a simpler object, and then approximate the self-intersections, and therefore the trimmed offset itself, from that object. In some cases the offset is approximated by a rational curve \cite{Seong}, or a polynomially parametrized curve \cite{PEK}. In other cases \cite{Chiang}, \cite{Lee}, \cite{OF} an approximation with a polygonal line is used. In \cite{Kim}, the input is a planar rational curve, and a $G_1$-continuous biarc approximation of the curve is employed. Some other approaches to the problem and additional references can be found in Section 11.2.4 of \cite{Mae2}. 

In this paper, we provide a new method to find a finite set containing all the affine parameter values giving rise to real, non-isolated singularities of the offset. Our method computes these values from the generator curve, and does not require to compute or make use of the implicit equation of the offset. We were inspired by the ideas of \cite{Fukushima}, which in turn is related with \cite{AS99}, where the computation of the genus of the offset from the genus of the generator curve is addressed. The main idea of the method is the following: by \cite{Fukushima}, one can establish a birational mapping between the offset and a much simpler curve. However, at the self-intersections of the offset, this mapping cannot be inverted. Additionally, one can prove that the same holds not only for the self-intersections of the offset, but for all real singularities of the offset. From a computational point of view, our algorithm uses \emph{subresultants} as an essential tool, jointly with root finding. Furthermore, we have implemented and tested our algorithm in the computer algebra system Maple; the code can be freely downloaded from \cite{gmdt}. 

Compared to other methods, our algorithm does not use any approximation of the offset, and can be applied to possibly singular, rational curves. The algorithm has at least two advantages: first, the implementation is easy and requires only a few lines of code. Second, the description is, unlike \cite{Far2}, \cite{Maekawa}, basically the same regardless of whether the parametrization is polynomial or rational. In fact, the algorithm is presented under the assumption that the parametrization is non-polynomial. As a disadvantage, we can mention the potential appearance in the output of superfluous values of the parameter. This does not happen when the parametrization is polynomial, but it can happen, in certain cases, when it is non-polynomial. Nevertheless, even in this case our results can be applied to the offset trimming problem, since the presence of superfluous values does not affect the final result. We must also observe that when 
approached from a symbolic or symbolic-numeric point of view, the problem is inherently difficult, because the degree of the offset of a curve of degree $n$ is bounded by $2(3n-2)$ (see Theorem 3.6 in \cite{Far2}), and the growing of the coefficients can be serious. For instance, the offset of the Descartes' Folium $x^3+y^3-3xy=0$ generically has degree 14 and 114 terms. In spite of this fact, our experiments show a good performance of our algorithm for moderate, but far from trivial, examples. These examples include some curves analyzed in \cite{Far2}, \cite{Maekawa}, in order to compare with those methods.

The structure of the paper is the following. Generalities on offsets and subresultants are provided in Section \ref{sec-prelim}. The strategy behind our method is presented in Section \ref{sec-inv}. This strategy is at first aimed to compute the self-intersections of the curve; however, we prove that, as a by-product, we also get the remaining singularities of the offset. The main result of the paper, jointly with the algorithm it gives rise to, are given in Section \ref{final-th}; details on examples, as well as a thorough analysis of the complexity of the algorithm and the growing of the coefficients, are also provided here. The conclusions of the paper are presented in Section \ref{sec-conclusions}. Although our algorithm can be easily described and implemented, the proof of the result it is based on takes certain work; the parts of the proof that are not essential to understand the main result of the paper and the subsequent algorithm, are given in Appendix I, Appendix II and Appendix III.

\section{Preliminaries and generalities.} \label{sec-prelim}

\subsection{The offset curve} \label{subsec-off}
\noindent Let ${\mathcal C}$ be a real, rational plane curve, parametrized by
\begin{equation} \label{curve}
\phi(t)=\left(\mathcal{X}(t),\mathcal{Y}(t)\right)=\left(\frac{X(t)}{W(t)},\frac{Y(t)}{W(t)}\right),
\end{equation}
where $X(t),Y(t),W(t)$ are polynomials with real coefficients, and \[\gcd(X(t),Y(t),W(t))=1.\] We also assume that $\phi(t)$ is {\it proper}, i.e. birational or equivalently injective except perhaps for finitely many values of the parameter $t$. This condition ensures that $\mathcal{C}$ is a reduced curve (see Theorem 4.41 in \cite{SWPD}). Note that properness can always be achieved by reparametrizing the curve, if necessary \cite{SWPD}. 

The \emph{offset} to ${\mathcal C}$ at distance $d\in {\Bbb R}^+$, ${\mathcal O}_d({\mathcal C})$, is defined as the \emph{Zariski closure} of the set of points $(x,y)=\phi_d(t)$, where 
\begin{equation} \label{offset}
\phi_d(t)=\left(\frac{X(t)}{W(t)}\pm d \frac{V(t)}{\sqrt{U^2(t)+V^2(t)}},\frac{Y(t)}{W(t)}\mp d\frac{U(t)}{\sqrt{U^2(t)+V^2(t)}}\right),
\end{equation}
with
\begin{equation}\label{UV}
U(t)=X'(t)W(t)-X(t)W'(t),\mbox{ }V(t)=Y'(t)W(t)-Y(t)W'(t).
\end{equation} 
Furthermore, in the paper we will assume that $t$ is a real value. Therefore, $(x,y)=\phi_d(t)$ means that the Euclidean distance between $(x,y)\in {\mathcal O}_d({\mathcal C})$ and the point $p=\phi(t)\in {\mathcal C}$, measured along the normal line to ${\mathcal C}$ through $p=\phi(t)$, is $d$; we say then that $p=\phi(t)$ {\it generates} $(x,y)$. When the first sign of $\pm$ and $\mp$ in the expression \eqref{offset} is considered, the geometrical locus described is called the \emph{exterior offset}; if the second sign is chosen, the geometrical locus described this way is called the \emph{interior offset}. Furthermore, $\phi_d(t)$ can be extended to the $t_0$ values where $U^2(t_0)+V^2(t_0)=0$ by just taking limits $t\to t_0$ (in the usual topology); the points computed this way also belong to the offset. Additionally, if $\mbox{lim}_{t\to \infty}\phi(t)$ is an affine point, which happens iff $\deg(X(t))\leq \deg(W(t))$ and $\deg(Y(t))\leq \deg(W(t))$, $\mbox{lim}_{t\to \infty}\phi_d(t)$ generates two more points, which we will denote as $P_{\pm \infty}$, also belonging to the offset. 

The computation of an implicit equation $F(x,y)=0$ of ${\mathcal O}_d({\mathcal C})$ is addressed in \cite{Far2}. Let us review some of the ideas of \cite{Far2}, which are relevant for our purposes. In \cite{Far2} the following polynomials are introduced:
\begin{eqnarray} 
\tilde{P}(x,y,t):= U(t)( W(t)x-X(t) ) +V(t)( W(t)y-Y(t) ) = 0, \label{P}\\
\tilde{ Q}(x,y,t):= ( W(t) x - X(t) )^2 + ( W(t)y - Y(t) )^2 - d^2 W^2(t) = 0.\label{Q}  
\end{eqnarray}
%
For the values of $t$ which satisfy 
$W(t)\neq0$, $(U(t),V(t))\neq (0,0),$ 
the equation \eqref{P} represents the normal line to ${\mathcal C}$ at the point $\left(\frac{X(t)}{W(t)},\frac{Y(t)}{W(t)}\right)$, while the equation \eqref{Q} represents the circle of radius $d$ centered at the point $\left(\frac{X(t)}{W(t)},\frac{Y(t)}{W(t)}\right)$. The implicit equation  of ${\mathcal O}_d({\mathcal C})$ is determined by eliminating the variable $t$ in the system formed by \eqref{P} and \eqref{Q}. However, in order to avoid extraneous components (see \cite{Far2} for details), we must divide first $\tilde{P}(x,y,t)$, $\tilde{Q}(x,y,t)$ by their \emph{contents} with respect to $t$.\footnote{Let $f(x_1,\ldots,x_r,x_{r+1},\ldots,x_n)$ be a polynomial in the variables $x_1,\ldots,x_r,x_{r+1},\ldots,x_n$ with coefficients in a unique factorization domain. The \emph{content} $\mbox{cont}_{x_1,\ldots,x_r}(f)$ of $f$ with respect to $x_1,\ldots,x_r$ is the $\gcd$ of the coefficients of $f$, seen as a polynomial in $x_{r+1},\ldots,x_n$ whose coefficients are polynomials in $x_1,\ldots,x_r$. The polynomial $\tilde{f}=\frac{1}{\mbox{cont}_{x_1,\ldots,x_r}(f)}\cdot f$ is called the \emph{primitive part} of $f$ with respect to $x_1,\ldots,x_r$.} Since $\tilde{Q}(x,y,t)$ can be written as 
\[
\begin{array}{rcl}
\tilde{Q}(x,y,t)&=&W^2(t)(x^2+y^2)-2W(t)X(t)x-2W(t)Y(t)y\\
&&+X^2(t)+Y^2(t)-d^2W^2(t),
\end{array}
\]
one can easily see (Lemma 3.1, \cite{Far2}) that the $t$--content of $\tilde{Q}$ is equal to
\begin{equation}\label{cont1}
\mu(t)=\gcd(W(t),X^2(t)+Y^2(t)).
\end{equation}
Similarly (Lemma 3.2, \cite{Far2}) the $t$--content of $\tilde{P}$ is equal to
\begin{equation}\label{cont2}
\beta(t)=\sigma(t) \gamma(t)\mu(t)
\end{equation}
with
\begin{equation}\label{cont3}
\sigma(t)=\gcd(W(t),W'(t)), \mbox{ }\gamma(t)= \gcd(U(t)/\sigma(t), V(t)/\sigma(t) ).
\end{equation}
Let $P(x,y,t)$ and $Q(x,y,t)$ be the polynomials obtained after removing the $t$-contents from $\tilde{P}$ and $\tilde{Q}$,
\begin{equation}\label{PyQ}
P(x,y,t):=\frac{\tilde{P}(x,y,t)}{\beta(t)},\mbox{ }Q(x,y,t):=\frac{\tilde{Q}(x,y,t)}{\mu(t)},
\end{equation}
and let \[H(x,y)=\mbox{Res}_t(P(x,y,t),Q(x,y,t)).\]
If $\mu(t)$ is constant, then $F(x,y)=H(x,y)$. If $\mathrm{deg}(\mu(t))>0$ then $H(x,y)$ can have extraneous, linear factors. Hence $F(x,y)$ is the result of dividing $H(x,y)$ by the product of the extraneous factors. Notice that if $\phi(t)$ is \emph{polynomial}, i.e. if $W(t)=1$, then $\mu(t)$ is constant and no extraneous factors can appear. 

In order to see that $F(x,y)=0$ really corresponds to ${\mathcal O}_d({\mathcal C})$, one observes that $F(x,y)$ is a factor of $\mbox{Res}_t(P(x,y,t),Q(x,y,t))$. By well-known properties of resultants (see \cite{Cox}) then for each $(x_0,y_0)\in {\mathcal O}_d({\mathcal C})$ either there exists $t_0$ such that $P(x_0,y_0,t_0)=Q(x_0,y_0,t_0)=0$, or $(x_0,y_0)$ is a common zero of the leading coefficients $\mbox{lc}_t(P)$, $\mbox{lc}_t(Q)$ of $P(x,y,t)$ and $Q(x,y,t)$ with respect to $t$. The next result, proved in Appendix I so as not to stop the flow of the paper, characterizes the points where both $\mbox{lc}_t(P)$, $\mbox{lc}_t(Q)$ simultaneously vanish. 

\begin{lemma}\label{l-aux}
The only points $(x_0,y_0)$ where the leading coefficients of $P(x,y,t)$, $Q(x,y,t)$ with respect to $t$ simultaneously vanish are $P_{\pm\infty}$, in the case when $P_{\pm\infty}$ are affine points. 
\end{lemma}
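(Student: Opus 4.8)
The plan is to compute the leading coefficients $\mathrm{lc}_t(P)$ and $\mathrm{lc}_t(Q)$ explicitly from the definitions in \eqref{P}--\eqref{PyQ}, and then to analyze their common zero set directly. First I would deal with $\tilde{Q}$: from the expanded form displayed just before \eqref{cont1}, the coefficient of the top power of $t$ in $\tilde{Q}(x,y,t)$ comes from the leading terms of $W^2(t)(x^2+y^2) - 2W(t)X(t)x - 2W(t)Y(t)y + X^2(t)+Y^2(t) - d^2W^2(t)$, so $\mathrm{lc}_t(\tilde Q)$ is a polynomial in $x,y$ whose vanishing forces $(x,y)$ to lie on a specific line or pair of lines determined by the leading coefficients $w,x_0,y_0$ (say) of $W,X,Y$; after dividing by $\mu(t)$ the degree in $t$ drops but the leading coefficient in $(x,y)$ is essentially unchanged up to the leading coefficient of $\mu$. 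Similarly, from \eqref{P} I would expand $\tilde{P}(x,y,t) = U(t)(W(t)x - X(t)) + V(t)(W(t)y - Y(t))$ and read off $\mathrm{lc}_t(\tilde P)$ as a linear form in $x,y$ (or a constant, depending on degree comparisons), then divide by $\beta(t)$.

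The key observation is that the condition $\deg X \le \deg W$ and $\deg Y \le \deg W$ — exactly the condition under which $\lim_{t\to\infty}\phi(t)$ is affine, equal to some point $A = (a_1,a_2)$ — governs whether these leading coefficients are nontrivial forms in $(x,y)$ or mere constants. I would split into the two cases. In the case where $\lim_{t\to\infty}\phi(t)$ is \emph{not} affine (i.e. $\deg X > \deg W$ or $\deg Y > \deg W$), I expect $\mathrm{lc}_t(Q)$ to be a nonzero constant (independent of $x,y$), so the two leading coefficients cannot vanish simultaneously and the lemma holds vacuously. In the case where $A$ is affine, I would show that $\mathrm{lc}_t(\tilde P)$ and $\mathrm{lc}_t(\tilde Q)$ both vanish at $(x,y)$ precisely when $(x-a_1, y-a_2)$ is parallel to the limiting normal direction and has the right length $d$ — in other words, when $(x,y)$ is one of the two points obtained by moving a distance $d$ along the limiting normal line from $A$, which is exactly the definition of $P_{\pm\infty}$. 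Care must be taken with the division by the contents $\mu(t)$ and $\beta(t) = \sigma(t)\gamma(t)\mu(t)$: I would argue that since these are gcd's of subsets of the coefficients, dividing by them scales the \emph{leading} coefficient (with respect to $t$) by the leading coefficient of the content, a nonzero constant, and hence does not change the zero locus of $\mathrm{lc}_t(P)$, $\mathrm{lc}_t(Q)$; one must also check that $\deg_t P$ and $\deg_t Q$ drop by exactly $\deg\beta$ and $\deg\mu$ respectively, which follows because the contents are genuine polynomial factors.

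The main obstacle I anticipate is bookkeeping the degree comparisons cleanly: the degree of $\tilde P$ in $t$ is at most $\deg U + \max(\deg W, \deg X) + \cdots$, but several leading terms can cancel, and the precise top-degree coefficient of $U(t) = X'W - XW'$ and $V(t) = Y'W - YW'$ must be computed in terms of the leading coefficients of $X, Y, W$ and their degrees — and these cancel identically when $\deg X = \deg W$ (resp. $\deg Y = \deg W$), which is exactly the delicate borderline case where $A$ is affine. So the heart of the argument is a careful Taylor/leading-coefficient expansion of $U, V, \tilde P, \tilde Q$ as $t\to\infty$, equivalently a local analysis at the point at infinity of the parameter line; identifying the resulting simultaneous-vanishing locus with $\{P_{+\infty}, P_{-\infty}\}$ then amounts to recognizing the limiting normal line and the distance-$d$ condition. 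Since this is routine but lengthy, it is appropriately deferred to Appendix I as the authors indicate.
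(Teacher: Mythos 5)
Your plan is essentially the paper's own Appendix I argument: compute $\mathrm{lc}_t(\tilde P)$ and $\mathrm{lc}_t(\tilde Q)$ (division by the contents $\mu,\beta$ only rescaling them by nonzero constants), split on whether $p_\infty=\lim_{t\to\infty}\phi(t)$ is affine, observe that in the non-affine case the leading coefficients are nonzero constants, and in the affine case identify their zero loci as the normal line to ${\mathcal C}$ at $p_\infty$ and the circle of radius $d$ centered at $p_\infty$, whose intersection is exactly $\{P_{+\infty},P_{-\infty}\}$. One small slip: the vanishing locus of $\mathrm{lc}_t(\tilde Q)$ in the affine case is that circle, not ``a specific line or pair of lines'' as you first write, though your later ``distance $d$ from $A$ along the limiting normal'' description already corrects this.
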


\noindent Therefore for every $(x_0,y_0)\in {\mathcal O}_d({\mathcal C})$, with perhaps the exception of $P_{\pm\infty}$, there exists $t_0$ such that $P(x_0,y_0,t_0)=Q(x_0,y_0,t_0)=0$. In turn, this means that $(x_0,y_0)$ simultaneously belongs to the normal line to ${\mathcal C}$ through $p_0= \phi(t_0)$, and to the circle centered at $p_0$ of radius $d$, which implies that $(x_0,y_0)$ is generated by $t_0$ via $\phi_d(t)$. Note that in that case $W(t_0)\neq 0$: indeed, since by hypothesis $\gcd(X(t),Y(t),W(t))=1$, $W(t_0)=0$ implies that either ${\mathcal X}(t)$ or ${\mathcal Y}(t)$ tend to infinity as $t\to t_0$. 

A more algebraic definition of the offset curve, using an incidence diagram, can be found in \cite{Rafa1}. We also summarize some notions and results of \cite{AS97} and \cite{Rafa1}, that we will use in this paper. First, ${\mathcal O}_d({\mathcal C})$ has at most two components. An irreducible component of ${\mathcal O}_d({\mathcal C})$ is said to be \emph{simple} if almost every point of that component is generated by just one point of ${\mathcal C}$; otherwise, the component is called \emph{special}. If ${\mathcal C}$ is properly and rationally parametrized, then ${\mathcal O}_d({\mathcal C})$ is reducible iff $U^2(t)+V^2(t)$, in our notation, is a perfect square (see Corollary 3.4 in \cite{AS97}). Furthermore, if ${\mathcal O}_d({\mathcal C})$ is reducible, then it has two rational components. Finally, if ${\mathcal O}_d({\mathcal C})$ is irreducible then it is simple. 

If ${\mathcal O}_d({\mathcal C})$ is reducible the problem which we address in the paper can be solved in an easier way. Indeed, denoting the rational parametrizations of the components of ${\mathcal O}_d({\mathcal C})$ by $\bfx_1(t)$ and $\bfx_2(t)$, one just needs to: (1) compute the singularities of $\bfx_1(t)$ and $\bfx_2(t)$ separately (see for instance \cite{Rubio}); (2) compute the intersections of the components of ${\mathcal O}_d({\mathcal C})$, for instance by setting $\bfx_1(t)=\bfx_2(s)$ and applying elimination methods. Thus, in the rest of this paper we will assume that ${\mathcal O}_d({\mathcal C})$ is irreducible, in which case it is also simple. Therefore, the real part of ${\mathcal O}_d({\mathcal C})$, which is the object that we want to study here, corresponds to the points generated by $t\in {\Bbb R}$ (see Remark \ref{new} in Subsection \ref{sec-sing}), possibly with the exception of some isolated singularities and $P_{\pm \infty}$.

In this paper we will address the computation of the real affine, non-isolated singularities of ${\mathcal O}_d({\mathcal C})$, which are generated by real values of the parameter of $\phi(t)$ via \eqref{offset}. The only exception to this are the points $P_{\pm \infty}$, which may be generated only by $t=\infty$. Since our goal is to find the $t$ values generating the singularities of ${\mathcal O}_d({\mathcal C})$, this is not really an issue. Nevertheless, if one also wants to detect whether or not $P_{\pm \infty}$ are singular, it suffices to reparametrize the curve so that $P_{\pm \infty}$ are generated by affine values of $t$, and then examine these $t$ values.

\subsection{Subresultants.} \label{sec-subres}

We refer to \cite{Basu}, \cite{Bened}, \cite{GLRR1} and \cite{vonzur} for further reading on the notions and results in this subsection. Let ${\Bbb D}$ be an integral domain, and let $f,g\in {\Bbb D}[t]$ be the polynomials
\[
f(t)=a_nt^n+a_{n-1}t^{n-1}+\cdots+a_0,\mbox{ }g(t)=b_mt^m+b_{m-1}t^{m-1}+\cdots +b_0,
\]
where $\deg(f)\leq n$, $\deg(g)\leq m$.
\begin{definition}\label{defsyl} 
For $i\in\{0,\ldots,\inf(n,m)-1\}$, the Sylvester matrix of index
$i$ associated to $f(t)$, $n$, $g(t)$ and $m$, denoted by $
\mathbf{Sylv}_i(P,n,Q,m)$, is the $(n+m-2i)\times (n+m-i)$ matrix:
\[\hbox{\bf Sylv}_i(f,n,g,m)=\overbrace{  \begin{pmatrix}
                        \begin{matrix}a_n&\ldots&a_0\\
                                   &\ddots&&\ddots& \\
                                   &&a_n&\ldots&a_0\\\end{matrix} \\
                        \begin{matrix}b_m&\ldots&b_0 \\
                                   &\ddots&&\ddots& \\
                                   &&b_m&\ldots&b_0\\\end{matrix} \\
\end{pmatrix}}^{n+m-i}
                   \begin{matrix} \left.\begin{matrix}\\ \\
\\\end{matrix}\right\}&m-i\\
                                  \left.\begin{matrix}\\ \\
\\\end{matrix}\right\}&n-i\\
                   \end{matrix}\]
The Sylvester matrix of index 0 associated to $f(t)$, $n$, $g(t)$
and $m$ is denoted by $\hbox{\bf Sylv}(f,n,g,m)$.
\end{definition}

If $\deg(f)=n$ and $\deg(g)=m$ then the Sylvester matrix of index
$0$ is simply called the \emph{Sylvester matrix of $f(t)$ and $g(t)$},
denoted by $\hbox{\bf Sylv}(f,g)$, and the Sylvester matrix of
index $i\neq 0$ is denoted by $\hbox{\bf Sylv}_i(f,g)$.

\begin{definition}
The determinant of $\hbox{\bf Sylv}(f,g)$, denoted by $\mathrm{Res}(f,g)$, is known as the
\emph{resultant} of $f(t)$ and $g(t)$.
\end{definition}

The concept of determinant polynomial associated to a matrix
provides one of the usual ways to define subresultant polynomials.

\begin{definition}
Let $\Delta$ be a $m\times n$ matrix with $m\leq n$. The
determinant polynomial of $\Delta$, $\hbox{\bf detpol}(\Delta)$,
is defined as:
\[
\hbox{\bf detpol}(\Delta)=\sum_{k=0}^{n-m}\det(\Delta_k)t^{n-m-k}
\]
where $\Delta_k$ is the square submatrix of $\Delta$ consisting of
the first $m-1$ columns and the $(k+m)$--th column.
\end{definition}

\begin{definition}
The $i$-th subresultant polynomial of $f(t)$ and $g(t)$, $\mathbf{Subres}_i(f,n,g,m)$, is the determinant polynomial of $\mathbf{Sylv}_i(f,n,g,m)$, i.e.
$$
\mathbf{Subres}_i(f,n,g,m)=\mathbf{detpol}(\mathbf{Sylv}_i(f,n,g,m)), \text{ for } 0\leq i \leq \inf(n,m)-1.
$$
The sequence $\{\mathbf{Subres}_i(f,n,g,m)\}_{i\geq 0}$ is called the subresultant chain of $f,g$.
\end{definition}

We have that $\deg(\mathbf{Subres}_i(f,n,g,m) )\leq i$. The coefficient of degree $i$ of the polynomial $\mathbf{Subres}_i(f,n,g,m)$, denoted by $\mathbf{sres}_i(f,n,g,m)$, is called the \emph{$i$-th principal subresultant coefficient} of $f,n$ and $g,m$. When $\mathbf{sres}_i(f,n,g,m)=0$, the polynomial $\mathbf{Subres}_i(f,n,g,m)$ is said to be \emph{defective}.  

\begin{theorem}\label{th-1}[Fundamental Property of subresultants] Let $\mathbb{K}$ be the fraction field of $\mathbb{D}$, and assume that $\deg(f)=n$ or $\deg(g)=m$, $\gcd(f,g)\neq f$ and $\gcd(f,g)\neq g$. Then the first subresultant polynomial different from zero in the sequence $\{\mathbf{Subres}_k(f,n,g,m)\}_{k\geq 0}$ is non-defective, and equal to the greatest common divisor of $f,g$ in $\mathbb{K}[t]$. 
\end{theorem}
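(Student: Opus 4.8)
The plan is to combine two facts about the Sylvester matrices $\mathbf{Sylv}_i(f,n,g,m)$: that their determinant polynomials lie in the ideal generated by $f$ and $g$ with controlled degrees, and a rank computation for those matrices. Write $h=\gcd(f,g)$ in $\mathbb{K}[t]$ and $d=\deg(h)$; one first checks that the hypotheses $\gcd(f,g)\neq f$, $\gcd(f,g)\neq g$ together with $\deg(f)=n$ or $\deg(g)=m$ force both $f$ and $g$ to be nonzero and $0\le d\le \inf(n,m)-1$, so that $\mathbf{Subres}_d(f,n,g,m)$ indeed belongs to the chain. Using the symmetry $\mathbf{Subres}_i(f,n,g,m)=(-1)^{(n-i)(m-i)}\mathbf{Subres}_i(g,m,f,n)$, obtained by swapping the two row blocks of the Sylvester matrix, I may assume without loss of generality that $\deg(f)=n$.

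First I would record the \emph{combination property}: expanding the determinant polynomial $\mathbf{detpol}(\mathbf{Sylv}_i(f,n,g,m))$ along the column of monomials, and recalling that the rows of $\mathbf{Sylv}_i$ are the coefficient vectors of $t^{m-i-1}f,\ldots,f,t^{n-i-1}g,\ldots,g$, shows that
\[
\mathbf{Subres}_i(f,n,g,m)=u_i(t)f(t)+v_i(t)g(t)
\]
for suitable $u_i,v_i\in\mathbb{D}[t]$ with $\deg(u_i)\le m-i-1$ and $\deg(v_i)\le n-i-1$. In particular $h$ divides $\mathbf{Subres}_i(f,n,g,m)$ in $\mathbb{K}[t]$ for every $i$, and we already know $\deg(\mathbf{Subres}_i)\le i$.

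Next comes the rank computation. Regard $\mathbf{Sylv}_i(f,n,g,m)$ as the matrix of the $\mathbb{K}$-linear map $\Phi_i$ sending a pair $(u,v)$ with $\deg(u)<m-i$, $\deg(v)<n-i$ to $uf+vg$. Writing $f=hf_1$, $g=hg_1$ with $\gcd(f_1,g_1)=1$, $\deg(f_1)=n-d$ and $\deg(g_1)\le m-d$ (here the hypothesis $\deg(f)=n$ is used), a short argument using coprimality shows that $(u,v)\in\ker\Phi_i$ exactly when $(u,v)=(g_1w,-f_1w)$ for some $w$ with $\deg(w)<d-i$; hence $\dim_{\mathbb{K}}\ker\Phi_i=\max(0,d-i)$ and $\mathrm{rank}(\mathbf{Sylv}_i)=(n+m-2i)-\max(0,d-i)$. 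For $i<d$ this rank is strictly less than the number $n+m-2i$ of rows, so every maximal minor of $\mathbf{Sylv}_i$ vanishes; since $\mathbf{Subres}_i(f,n,g,m)$ is built from such maximal minors, it is zero. For $i=d$, the principal subresultant coefficient $\mathbf{sres}_d(f,n,g,m)$ is the minor of $\mathbf{Sylv}_d$ formed by its first $n+m-2d$ columns, which is (up to sign) the determinant of the map sending $(u,v)$, with $\deg(u)<m-d$ and $\deg(v)<n-d$, to $uf+vg$ read modulo polynomials of degree $<d$; this map is injective, because $h$ divides $uf+vg$ while $\deg(h)=d$, so any such element of degree $<d$ must be $0$, whence $(u,v)\in\ker\Phi_d=\{(0,0)\}$. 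Thus $\mathbf{sres}_d(f,n,g,m)\neq 0$, i.e.\ $\mathbf{Subres}_d(f,n,g,m)$ is non-defective and has degree exactly $d$.

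Finally I would assemble the pieces: $\mathbf{Subres}_d(f,n,g,m)$ is the first nonzero polynomial of the chain, it is non-defective, and, being a nonzero multiple of $h=\gcd(f,g)$ with $\deg(\mathbf{Subres}_d)=d=\deg(h)$, it equals $c\cdot\gcd(f,g)$ for some $c\in\mathbb{K}\setminus\{0\}$; that is, it is a greatest common divisor of $f$ and $g$ in $\mathbb{K}[t]$. The main points that need care are the bookkeeping in the combination property --- verifying that $\mathbf{detpol}$ really is the displayed $\mathbb{D}[t]$-combination of the row polynomials with the stated degree bounds --- and the case $i=d$, where full row rank by itself is not enough and one must single out the specific leading minor $\mathbf{sres}_d$ and argue that the associated ``reduction modulo low degree'' map is injective; the edge cases (for instance $d=0$, or the non-emptiness of the chain) require only a line of checking.
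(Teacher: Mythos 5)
Your proof is correct. Note that the paper does not prove Theorem \ref{th-1} at all: it is stated as a known result, with the proofs delegated to the cited references (\cite{Basu}, \cite{GLRR1}, \cite{vonzur}). What you have written is essentially the classical argument found there: the combination property $\mathbf{Subres}_i=u_if+v_ig$ with $\deg(u_i)\le m-i-1$, $\deg(v_i)\le n-i-1$, so that $h=\gcd(f,g)$ divides every subresultant; the kernel computation $\ker\Phi_i=\{(g_1w,-f_1w):\deg(w)<d-i\}$ giving $\mathrm{rank}(\mathbf{Sylv}_i)=n+m-2i-\max(0,d-i)$, which annihilates all maximal minors (hence all coefficients $\det(\Delta_k)$) when $i<d$; and, for $i=d$, the identification of $\mathbf{sres}_d$ with the determinant of the square map obtained by composing $(u,v)\mapsto uf+vg$ with the truncation that keeps the coefficients of $t^d,\dots,t^{n+m-d-1}$, injective because an image of degree $<d$ divisible by $h$ with $\deg(h)=d$ must vanish. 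Your bookkeeping is also sound: $\gcd(f,g)\neq f,g$ forces $f,g\neq 0$ and $d\le\inf(n,m)-1$, so $\mathbf{Subres}_d$ lies in the chain; the block-swap symmetry $\mathbf{Subres}_i(f,n,g,m)=(-1)^{(n-i)(m-i)}\mathbf{Subres}_i(g,m,f,n)$ legitimately reduces to the case $\deg(f)=n$; and the binding degree constraint in the kernel is indeed the one coming from $v$, since $\deg(g_1)\le m-d$. The conclusion that the first nonzero subresultant is non-defective of degree $d$ and a nonzero $\mathbb{K}$-multiple of $h$, hence a gcd in $\mathbb{K}[t]$, follows as you say; the only caveat is that several steps (the expansion of $\mathbf{detpol}$ along the last column, the coprimality argument) are sketched rather than written out, but each is standard and fills in without difficulty.
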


\begin{theorem}\label{th-2} Let $\psi  : A \rightarrow  B $ be a ring homomorphism and $P, Q \in A[t] $ be two polynomials with $\deg(P) = p$ and $\deg(Q) = q$. If $\deg(\psi(P)) = p$ and $\deg(\psi(Q)) = q$ then for any $i \leq q$ we have $\mathbf{Subres}_i(\psi(P),p,\psi(Q),q)  =\psi(\mathbf{Subres}_i(P, p,Q,q)).$ If  $\deg(\psi(P)) = p$ and $\deg(\psi(Q))=q^* < q$ (or vice versa), $\mathbf{Subres}_i(\psi(P),p,\psi(Q),q^*)$ and $\psi(\mathbf{Subres}_i(P, p,Q,q))$ are proportional.
\end{theorem}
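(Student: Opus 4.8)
The plan is to argue directly from the definition of $\mathbf{Subres}_i$ as the determinant polynomial of a Sylvester matrix, using that the whole construction is an integer‑polynomial expression in the coefficients of $P$ and $Q$. First I would extend $\psi$ to a ring homomorphism $A[t]\to B[t]$ acting coefficientwise, still written $\psi$. The entries of $\mathbf{Sylv}_i(P,p,Q,q)$ are the coefficients of $P$ and $Q$ together with zeros, and each coefficient of $\mathbf{Subres}_i(P,p,Q,q)=\mathbf{detpol}(\mathbf{Sylv}_i(P,p,Q,q))$ is the determinant of a fixed square submatrix, hence an integer polynomial in those entries; since every ring homomorphism commutes with such expressions, $\psi(\mathbf{Subres}_i(P,p,Q,q))=\mathbf{detpol}(\psi\,\mathbf{Sylv}_i(P,p,Q,q))$, with $\psi$ applied entrywise. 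Because the index‑$i$ Sylvester matrix depends only on the two ordered coefficient lists $(a_p,\dots,a_0)$ and $(b_q,\dots,b_0)$ arranged in the prescribed pattern, applying $\psi$ entrywise returns $\mathbf{Sylv}_i(\psi(P),p,\psi(Q),q)$, the matrix built from $\psi(P)$ relative to the bound $p$ and from $\psi(Q)$ relative to the bound $q$. Thus
\[
\psi\big(\mathbf{Subres}_i(P,p,Q,q)\big)=\mathbf{Subres}_i\big(\psi(P),p,\psi(Q),q\big)
\]
in all cases; when $\deg(\psi(P))=p$ and $\deg(\psi(Q))=q$ the bound $q$ is the true degree of $\psi(Q)$, so the right‑hand side is the ordinary subresultant and the first assertion is proved.

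For the degree‑drop case, say $\deg(\psi(P))=p$ but $\deg(\psi(Q))=q^{*}<q$ (the reverse case is symmetric, using the identity $\mathbf{Subres}_i(f,n,g,m)=\pm\,\mathbf{Subres}_i(g,m,f,n)$, a block row permutation, to move the polynomial whose degree has dropped into the second slot). Put $a=\mbox{lc}(\psi(P))=\psi(a_p)\neq0$. In $\mathbf{Sylv}_i(\psi(P),p,\psi(Q),q)$ the rows built from $\psi(Q)$ are shifts of its coefficient list padded with $q-q^{*}$ leading zeros, while those built from $\psi(P)$ are genuine shifts of $(a_p,\dots,a_0)$. A direct inspection shows that the first $q-q^{*}$ columns are supported only on the first $q-q^{*}$ rows of the matrix (which are among the $\psi(P)$‑rows), where they form a triangular block with $a$ on the diagonal, all other rows vanishing there. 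Expanding by Laplace along these columns factors every maximal minor that enters the determinant polynomial as $a^{\,q-q^{*}}$ times the complementary minor, and deleting the first $q-q^{*}$ rows and columns carries $\mathbf{Sylv}_i(\psi(P),p,\psi(Q),q)$—together with each square submatrix making up its determinant polynomial—precisely onto $\mathbf{Sylv}_i(\psi(P),p,\psi(Q),q^{*})$ and the corresponding submatrices (the row and column index ranges match, and both determinant polynomials have $i+1$ terms with the same powers of $t$). Hence
\[
\mathbf{Subres}_i\big(\psi(P),p,\psi(Q),q\big)=\psi(a_p)^{\,q-q^{*}}\,\mathbf{Subres}_i\big(\psi(P),p,\psi(Q),q^{*}\big),
\]
and since $\psi(a_p)\neq0$ these polynomials are proportional.

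I expect the main obstacle to be exactly this last bookkeeping in the degree‑drop case: checking that the triangular‑block factorization applies uniformly to \emph{every} minor of the determinant polynomial (not merely to the resultant), that the admissible range of the index $i$ is the same relative to the bound $q$ and relative to $q^{*}$, and that the Laplace expansion produces sign $+1$, so that no constant other than the stated power of $\psi(a_p)$ appears. Once $\psi$ is seen to commute with the determinant‑polynomial construction, the rest is formal.
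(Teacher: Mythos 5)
The paper states Theorem~\ref{th-2} without proof: it is a classical specialization property of subresultants, cited from the references listed at the start of Subsection~\ref{sec-subres} (e.g.\ \cite{Basu}, \cite{GLRR1}, \cite{vonzur}). Your sketch follows the standard argument from those sources and is correct. For the first assertion, commuting $\psi$ with the determinant--polynomial construction is immediate since each coefficient of $\mathbf{Subres}_i$ is an integer polynomial in the entries of $\mathbf{Sylv}_i$. For the degree--drop case, the bookkeeping you flagged does check out: in $\mathbf{Sylv}_i(\psi(P),p,\psi(Q),q)$ the first $q-q^{*}$ columns vanish on every $\psi(Q)$--row (the leading $q-q^{*}$ coefficients of $\psi(Q)$ relative to the formal degree $q$ are zero) and on the $\psi(P)$--rows of index $r>q-q^{*}$, leaving an upper--triangular $(q-q^{*})\times(q-q^{*})$ block in the top--left corner with $\psi(a_p)$ on the diagonal; since $q-q^{*}\leq(p+q-2i)-1$ whenever both sides are defined (which forces $i\leq q^{*}-1$), those columns lie among the fixed columns of every square submatrix $\Delta_k$, so the factor $\psi(a_p)^{\,q-q^{*}}$ pulls out uniformly with sign $+1$ by the block--triangular determinant formula, and deleting those rows and columns yields exactly $\mathbf{Sylv}_i(\psi(P),p,\psi(Q),q^{*})$. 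The one hypothesis to keep explicit is $i\leq\inf(p,q^{*})-1$, which guarantees that the first $q-q^{*}$ rows are indeed among the $\psi(P)$--rows; this is automatic whenever $\mathbf{Subres}_i(\psi(P),p,\psi(Q),q^{*})$ is defined.
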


  
  
  



\subsection{Singularities.} \label{sec-sing}

Let ${\mathcal D}$ be an algebraic planar curve, implicitly defined by $f(x,y)=0$. An affine point $S\in {\mathcal D}$ is a \emph{singularity} iff $f_x(S)=f_y(S)=0$; a nonsingular point of ${\mathcal D}$ is said to be a \emph{regular} point. One can always compute a local parametrization $(x(h),y(h))$ of ${\mathcal D}$ around any point $S\in {\mathcal D}$, regular or singular, where $x(h),y(h)$ are two analytic functions in a neighborhood of $h=0$. An equivalence class of irreducible local parametrizations around $S$ is called a \emph{place} \cite{walker}; we say that the place is \emph{centered} at $S=(x(0),y(0))$. Furthermore, we say that the place is \emph{real} if there is a representative of the class where all the coefficients are real. If $(x(h),y(h))$, with $x(h),y(h)$ real, represents a real place, where $x(h),y(h)$ converge for $|h|<\epsilon$, the set of points of ${\mathcal D}$ defined by $(x(h),y(h))$ for $|h|<\epsilon$, is called a \emph{real branch} of ${\mathcal D}$ through $(x(0),y(0))$. 

If $S$ is a self-intersection of ${\mathcal D}$, then $S$ is the center of several, different, places of ${\mathcal D}$. If ${\mathcal D}$ is not parallel to the $y$-axis, one can prove \cite{AS07} that any place centered at $S$ can be written, in a coordinate system centered at $S$ where the $x$-axis coincides with the tangent to ${\mathcal D}$ at $S$ (see Figure \ref{fig:new}, left), as 
\[{\mathcal P}(h)=(h^p,\beta_qh^q+\cdots),\]with $p,q\in {\Bbb N}$, $p\geq 1$, $q>p$. We will say that ${\mathcal P}(h)$ is \emph{singular} if $p\geq 2$. A point $S\in {\mathcal D}$ is a singularity iff it is either the center of one singular place, or the center of several singular or regular places. In the first case, we will say that $S$ is a \emph{local singularity} of ${\mathcal D}$. In the second case, we will say that $S$ is a \emph{self-intersection} of ${\mathcal D}$. Notice that $S$ can simultaneously be a local singularity and a self-intersection when it is the center of several places, and at least one of them is singular (see Figure \ref{fig:new}, bottom-right). 

\begin{remark}\label{new}
In the rest of the paper we will address the computation of the real local singularities and the real self-intersections that are not isolated, i.e. that are the center of at least one real place. Since we are assuming that $\mathcal{O}_d({\mathcal C})$ is simple, then any real branch of $\mathcal{O}_d({\mathcal C})$ comes from a real branch of ${\mathcal C}$ \cite{AS07}. Since in turn every real branch of ${\mathcal C}$ is generated by real values of $t$ via $\phi(t)$, any affine $t$ value generating a real, non-isolated singularity of $\mathcal{O}_d({\mathcal C})$ is real.
\end{remark} 

\begin{figure}
\begin{center}
\includegraphics[scale=0.4]{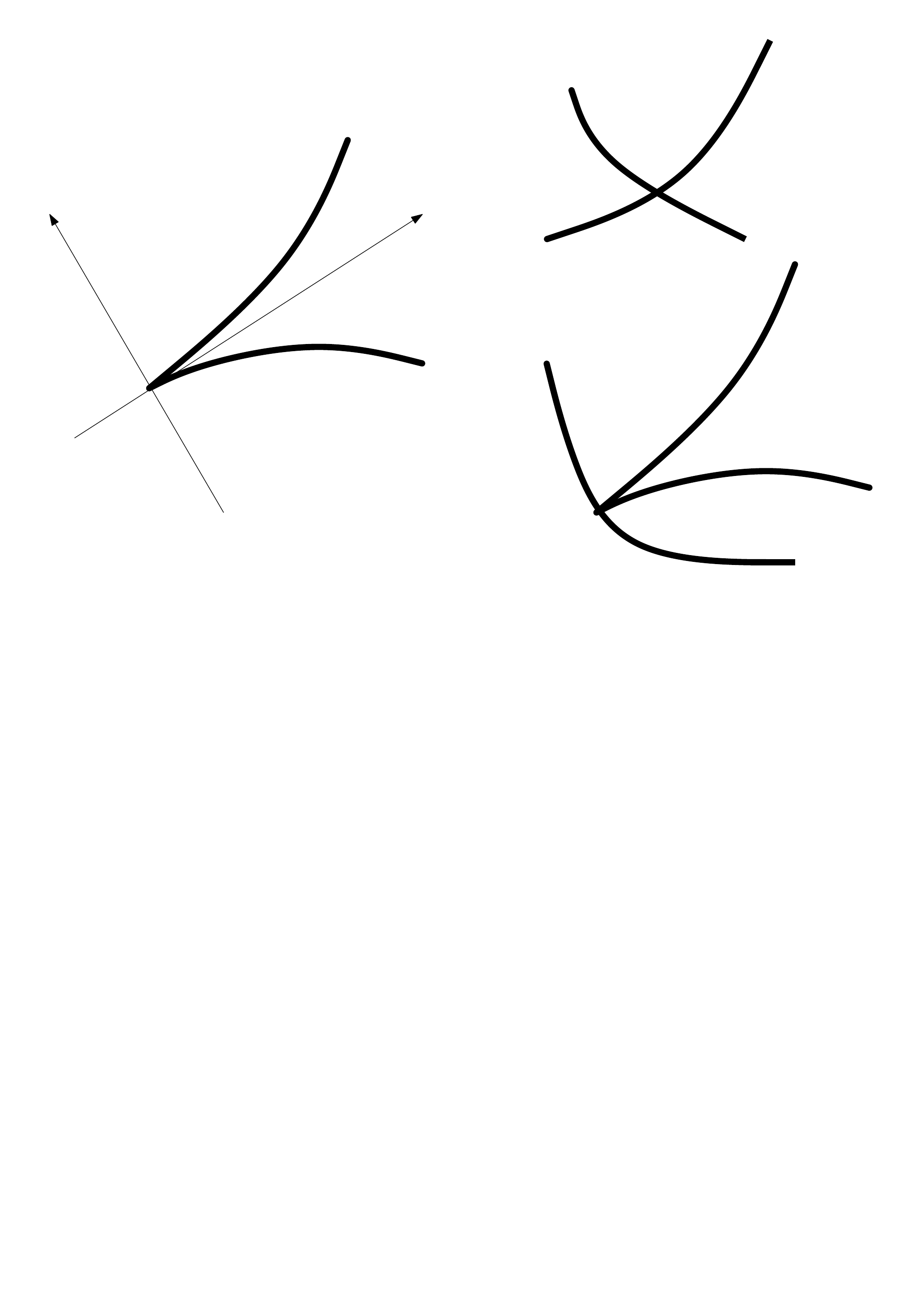}
\end{center}
\caption{Local singularities and self-intersections.}\label{fig:new}
\end{figure}
 
\section{Strategy for computing the offset singularities.} \label{sec-inv}

\subsection{The idea.}\label{sec-main}

Let us consider first the computation of the real self-intersections of ${\mathcal O}_d({\mathcal C})$. Later on we will see that our method to compute these singularities provides also, as a by-product, the local, real singularities of ${\mathcal O}_d({\mathcal C})$. Our idea to find the self-intersections of ${\mathcal O}_d({\mathcal C})$ is inspired in the strategy used in \cite{Fukushima} to find the genus of ${\mathcal O}_d({\mathcal C})$. In \cite{Fukushima}, the curve implicitly defined in the $(t,\alpha)$ plane by
\[\alpha^2-(U^2(t)+V^2(t))=0,\]is introduced. In order to avoid difficulties in the analysis of the $t$-values generating local singularities of ${\mathcal C}$, we will use instead the curve ${\mathcal M}$ defined as
\[
\alpha^2-(\widehat{U}^2(t)+\widehat{V}^2(t))=0,
\]
with 
$$\widehat{U} = \frac{U}{\gcd(U,V)} \text{ and }\widehat{V} = \frac{V}{\gcd(U,V)}.$$

\noindent Notice here that $t$ is the parameter in the parametrization $\phi(t)$, and $\alpha$ is a new auxiliary variable; a 
similar construction was used in \cite{AS97}, see Definition 3.2 therein, to analyze the rationality of the offset. 

We will restrict to the case when ${\mathcal M}$ is irreducible. Notice that ${\mathcal M}$ is reducible iff $\widehat{U}^2(t)+\widehat{V}^2(t)$ is a perfect square, i.e. iff $U^2(t)+V^2(t)$ is a perfect square\footnote{In order to check whether or not $U^2(t)+V^2(t)$ is a perfect square, we observe that non-negative elements of the ground field, i.e. non-negative real numbers, can be regarded as perfect squares.}, in which case ${\mathcal O}_d({\mathcal C})$ has two rational components. However this case admits an easier solution, as we observed in Section \ref{subsec-off}. Furthermore, as we also observed in Section \ref{subsec-off}, if ${\mathcal M}$ is irreducible then ${\mathcal O}_d({\mathcal C})$ is irreducible, and therefore ${\mathcal O}_d({\mathcal C})$ is also simple. 

Now ${\mathcal O}_d({\mathcal C})$ can be seen as the image of ${\mathcal M}$ under the following rational transformation of the plane, denoted by $\varphi(t,\alpha)$ (see Figure \ref{fig:inverse}):
\begin{equation}\label{t1}
x=x(t,\alpha)=\frac{X(t)}{W(t)}+d\frac{\widehat{V}(t)}{\alpha},\mbox{ }y=y(t,\alpha)=\frac{Y(t)}{W(t)}-d\frac{\widehat{U}(t)}{\alpha}
\end{equation}
Since under our hypotheses ${\mathcal O}_d({\mathcal C})$ is simple, for almost all points $(x_0,y_0)\in {\mathcal O}_d({\mathcal C})$ there exists a unique $(t_0,\alpha_0)$ such that $(x_0,y_0)=\varphi(t_0,\alpha_0)$. Therefore, the inverse $\varphi^{-1}(x,y)$ exists for almost all points of ${\mathcal O}_d({\mathcal C})$, and is rational as well. Hence if $\mathcal{O}_d(\mathcal{C})$ is simple then $\varphi$ defines a birational transformation between ${\mathcal M}$ and $\mathcal{O}_d(\mathcal{C})$.

\begin{figure}
\begin{center}
\includegraphics[scale=0.5]{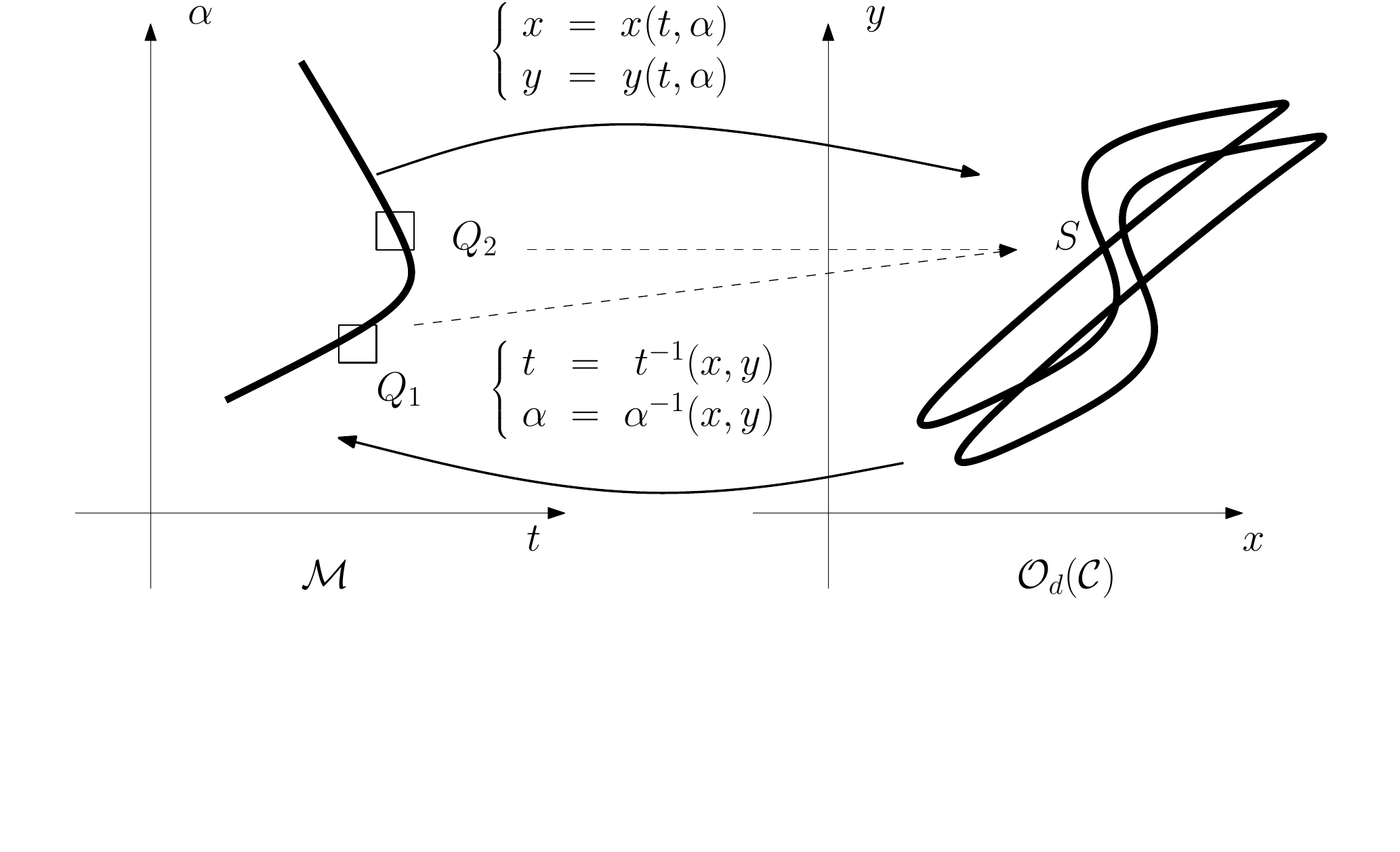}
\end{center}
\caption{Main idea to find the self-intersections of the offset.}\label{fig:inverse}
\end{figure}

Let us write
\begin{equation} \label{inv}
\varphi^{-1}(x,y)=\left(t^{-1}(x,y),\alpha^{-1}(x,y)\right),
\end{equation}
where $t^{-1}(x,y)$ and $\alpha^{-1}(x,y)$ are rational functions. Furthermore, let \[t^{-1}(x,y)=\frac{A(x,y)}{B(x,y)}.\]One can observe that if $S\in  {\mathcal O}_d({\mathcal C})$ is a self-intersection not generated by $t=\infty$, then there are at least two different values $t_1,t_2$ of $t$, and therefore two different points $Q_1,Q_2\in {\mathcal M}$, such that $\varphi(Q_1)=\varphi(Q_2)$ (see Figure \ref{fig:inverse}). So $\varphi^{-1}(S)$ cannot be defined, and therefore the denominator $ B(x,y)$ of $t^{-1}(x,y)$ must vanish at $S$. Hence, the self-intersections of ${\mathcal O}_d({\mathcal C})$ are among the points of ${\mathcal O}_d({\mathcal C})$ where $B(x,y)=0$. Since ${\mathcal O}_d({\mathcal C})$ is ``parametrized" by $\varphi(t,\alpha)$, one does not need to know or make use of the implicit equation of ${\mathcal O}_d({\mathcal C})$ in order to find the points of ${\mathcal O}_d({\mathcal C})$ where $B(x,y)=0$. Instead, one imposes
\begin{equation} \label{check}
 B(x(t,\alpha),y(t,\alpha))=0.
\end{equation}
By repeatedly using that $\alpha^2$ is a polynomial in the variable $t$, the numerator of \eqref{check} leads to a polynomial equation in $t$. The set of real roots of the polynomial provides a list of real $t$-values generating the real self-intersections of ${\mathcal O}_d({\mathcal C})$; notice that since ${\mathcal O}_d({\mathcal C})$ is reduced, this list is finite. We will see later, in Section \ref{subsec-comp}, that in fact this list contains all the real non-isolated singularities of ${\mathcal O}_d({\mathcal C})$, not only the real self-intersections.

In order to compute  $t^{-1}(x,y)$, let us observe the following. For a particular $(x_0,y_0)\in {\mathcal O}_d({\mathcal C})$, $t_0=t^{-1}(x_0,y_0)$ should be the unique root of \[\gcd(P(x_0,y_0,t),\,Q(x_0,y_0,t)),\]for the polynomials $P,Q$ introduced in Equation \eqref{PyQ}. Thus, in order to determine the function $t^{-1}(x,y)$, one can compute the $\gcd$ of $P(x,y,t)$, $Q(x,y,t)$ for a generic point $(x,y)$, considering $P(x,y,t)$, $Q(x,y,t)$ as elements of ${\Bbb R}[x,y][t]$ (i.e. as polynomials in the variable $t$ whose coefficients are real polynomials in $x,y$), with the additional condition $F(x,y)=0$; recall that $F(x,y)$ represents the implicit equation of the offset. More formally, one sees $P,\, Q$ as elements of ${\Bbb R}({\mathcal O}_d({\mathcal C}))[t]$, where ${\Bbb R}({\mathcal O}_d({\mathcal C}))$ is the field of real rational functions of ${\mathcal O}_d({\mathcal C})$ (see \cite{walker}). Since ${\mathcal O}_d({\mathcal C})$ is assumed to be irreducible then ${\Bbb R}({\mathcal O}_d({\mathcal C}))[t]$ is a Euclidean domain. Therefore 
\[{\mathcal G}(x,y,t)=\gcd_{{\Bbb R}({\mathcal O}_d({\mathcal C}))[t]}(P,Q)\]is well defined and can be computed by means of the Euclidean algorithm. In \cite{Fukushima}, ${\mathcal G}(x,y,t)$ is introduced in a similar way, and is computed using the Euclidean algorithm.

\subsection{Computation of ${\mathcal G}(x,y,t)$ and $t^{-1}(x,y)$ via subresultants.}\label{subsec-comp}

In order to compute ${\mathcal G}(x,y,t)$ by means of the Euclidean algorithm, we must perform several divisions between elements of ${\Bbb R}({\mathcal O}_d({\mathcal C}))[t]$. Furthermore, we need to check, at each step, if the remainder is zero in ${\Bbb R}({\mathcal O}_d({\mathcal C}))[t]$. This implies either computing $F(x,y)$ first and then checking if $F(x,y)$ divides each remainder, which can be extremely costly, or substituting $x=x(t,\alpha)$, $y=y(t,\alpha)$ in each remainder and then checking if the result is divisible by $\alpha^2-(\widehat{U}^2(t)+\widehat{V}^2(t))$. 

In this section we will present an alternative, faster method based on subresultants. For this purpose we need several previous results. On the one hand, these results allow us to prove that ${\mathcal G}(x,y,t)$, as a polynomial in the variable $t$ with coefficients in $x,y$, has just one root; furthermore, we will see that this root is simple, so that the degree of ${\mathcal G}(x,y,t)$ in the variable $t$ is 1. On the other hand, these results will be used later to show that our method provides all the real singularities of ${\mathcal O}_d({\mathcal C})$, not only the self-intersections. For the next lemma we recall the notation $\mathcal{X}(t) = \frac{X(t)}{W(t)}$, $\mathcal{Y}(t) = \frac{Y(t)}{W(t)}$ introduced in Subsection \ref{sec-prelim}.

\begin{lemma} \label{lem-1}
Let $(x_0,y_0)\in {\mathcal O}_d({\mathcal C})$, $(x_0,y_0)\neq P_{\pm \infty}$, which is not a self-intersection of ${\mathcal O}_d({\mathcal C})$, be an affine point 
generated by a real value $t_0$ with $(U(t_0),V(t_0))\neq (0,0)$. Then $\frac{\partial P}{\partial t}(x_0,y_0,t_0)=0$ if and only if $(x_0,y_0)$ is a local singularity of ${\mathcal O}_d({\mathcal C})$.
\end{lemma}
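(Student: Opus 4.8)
The plan is to relate the partial derivative $\frac{\partial P}{\partial t}(x_0,y_0,t_0)$ to the behaviour of the map $\phi_d$ near $t_0$, using the geometric meaning of $P$ as the normal line. Recall that $\tilde P(x,y,t)=0$ is (up to the content $\beta(t)$) the equation of the normal line to $\mathcal{C}$ at $\phi(t)$, and that the point $(x_0,y_0)\in\mathcal{O}_d(\mathcal{C})$ generated by $t_0$ lies on the normal line at $\phi(t_0)$ and on the circle of radius $d$ centered there; moreover, since $(x_0,y_0)$ is not a self-intersection and $(x_0,y_0)\neq P_{\pm\infty}$, by the discussion following Lemma \ref{l-aux} the value $t_0$ is the \emph{only} parameter generating $(x_0,y_0)$, and $W(t_0)\neq 0$. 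First I would fix the local picture: since $(U(t_0),V(t_0))\neq(0,0)$, near $t_0$ the normal direction $(U(t),V(t))$ varies smoothly, so $\phi_d$ is an analytic parametrization of a branch of $\mathcal{O}_d(\mathcal{C})$ through $(x_0,y_0)$, and this branch is the unique branch of the offset there (simplicity). Hence $(x_0,y_0)$ is a local singularity of $\mathcal{O}_d(\mathcal{C})$ if and only if this branch is singular, i.e. if and only if the speed vector $\phi_d'(t_0)$ vanishes.

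The core computation is therefore to show: $\frac{\partial P}{\partial t}(x_0,y_0,t_0)=0 \iff \phi_d'(t_0)=0$. I would proceed as follows. Consider $G(t):=\tilde P(x_0,y_0,t)=U(t)(W(t)x_0-X(t))+V(t)(W(t)y_0-Y(t))$. Since $t_0$ is a simple root of the content-free $P(x_0,y_0,t)=\tilde P(x_0,y_0,t)/\beta(t)$ — it is the unique root of $\mathcal{G}(x_0,y_0,t)$, which has degree one in $t$, as will be shown — the vanishing of $\frac{\partial P}{\partial t}(x_0,y_0,t_0)$ is equivalent to $\beta(t_0)=0$, and since $\beta=\sigma\gamma\mu$ with $W(t_0)\neq 0$ (so $\sigma(t_0)\neq 0$, $\mu(t_0)\neq0$), this is equivalent to $\gamma(t_0)=0$, i.e. to $\gcd(\widehat U,\widehat V)$... wait — one must be careful: $\gamma=\gcd(U/\sigma,V/\sigma)$, and $\gamma(t_0)=0$ means $t_0$ is a common root of $U$ and $V$, contradicting $(U(t_0),V(t_0))\neq(0,0)$. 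So in fact $\beta(t_0)\neq0$, and the condition $\frac{\partial P}{\partial t}(x_0,y_0,t_0)=0$ is genuinely equivalent to $\frac{\partial \tilde P}{\partial t}(x_0,y_0,t_0)=0$, i.e. to $G'(t_0)=0$. Now I would write $W(t_0)x_0-X(t_0)$ and $W(t_0)y_0-Y(t_0)$ explicitly: because $(x_0,y_0)=\phi_d(t_0)$, these equal $\pm d\,W(t_0)\widehat V(t_0)/\alpha_0$ and $\mp d\,W(t_0)\widehat U(t_0)/\alpha_0$ respectively, where $\alpha_0=\sqrt{\widehat U^2(t_0)+\widehat V^2(t_0)}\neq 0$. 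Substituting and differentiating $G$, using $G(t_0)=0$, one finds that $G'(t_0)$ is, up to the nonzero factor $\frac{\pm d W(t_0)}{\alpha_0}$ (and terms proportional to $G(t_0)=0$), a multiple of the expression that also appears in $\phi_d'(t_0)$ — concretely, the derivative of the unit normal times the curvature-type quantity $k(t_0)+1/d$ rescaled. I expect that after the dust settles, $G'(t_0)=c\cdot\langle \phi_d'(t_0),\,n(t_0)^\perp\rangle$ for a nonzero constant $c$, where $n$ is the unit normal; and since $\phi_d'(t_0)$ is always parallel to the tangent of the offset which is itself parallel to $n(t_0)^\perp$ (offsets share normals with the generator at corresponding points), $\phi_d'(t_0)=0 \iff \langle\phi_d'(t_0),n(t_0)^\perp\rangle=0 \iff G'(t_0)=0$.

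Putting the two equivalences together: $\frac{\partial P}{\partial t}(x_0,y_0,t_0)=0 \iff G'(t_0)=0 \iff \phi_d'(t_0)=0 \iff$ the unique offset branch at $(x_0,y_0)$ is singular $\iff (x_0,y_0)$ is a local singularity of $\mathcal{O}_d(\mathcal{C})$, which is the claim.

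The main obstacle I anticipate is the last equivalence in the chain, namely cleanly identifying $G'(t_0)$ with (a nonzero multiple of) the component of $\phi_d'(t_0)$ along the offset's tangent direction, and verifying that the proportionality constant is genuinely nonzero under the stated hypotheses — this requires a careful differentiation keeping track of $W(t_0)\neq 0$, $\alpha_0\neq 0$, and $(U(t_0),V(t_0))\neq(0,0)$, together with the classical fact that the generator and its offset have parallel normals at corresponding parameter values. A secondary point needing care is the reduction from $P$ to $\tilde P$ (i.e. that dividing by the content does not introduce or kill a root at $t_0$), which rests on $\beta(t_0)\neq 0$; and the appeal to $\deg_t\mathcal{G}(x,y,t)=1$ with a simple root, which is established in the results of Subsection \ref{subsec-comp} that we are entitled to assume.
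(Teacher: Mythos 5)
Your overall strategy is the paper's own: reduce $\frac{\partial P}{\partial t}$ to $\frac{\partial \tilde P}{\partial t}$ at $(x_0,y_0,t_0)$ (the paper passes through $P^{\star}=\tilde P/W^3$ and your observation that $\beta(t_0)\neq 0$, because $W(t_0)\neq 0$ kills $\sigma,\mu$ and $(U(t_0),V(t_0))\neq(0,0)$ kills $\gamma$, is exactly the right justification), then differentiate the normal-line equation, use $\tilde Q(x_0,y_0,t_0)=0$ to write $(x_0-\mathcal X(t_0),y_0-\mathcal Y(t_0))$ as $\pm d$ times the unit normal, and recognize the resulting condition as the classical one characterizing local singularities of the offset. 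The identity you ``expect after the dust settles'' is indeed true: the paper's computation gives $\frac{\partial P^{\star}}{\partial t}(x_0,y_0,t_0)=(x_0-\mathcal X(t_0))\mathcal X''(t_0)+(y_0-\mathcal Y(t_0))\mathcal Y''(t_0)-\bigl(\mathcal X'(t_0)^2+\mathcal Y'(t_0)^2\bigr)$, which after substituting the normal displacement becomes $-\bigl(\mathcal X'(t_0)^2+\mathcal Y'(t_0)^2\bigr)\bigl(1+d\,k(t_0)\bigr)$, i.e.\ your tangential component of $\phi_d'(t_0)$. However, as written this is the one genuine gap: the crux of the lemma is precisely this computation, and your proposal only asserts its expected outcome instead of carrying it out.

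Two further points need repair. First, do not invoke $\deg_t\mathcal G(x,y,t)=1$ or the simplicity of the root of $\mathcal G(x_0,y_0,t)$: in the paper Theorem \ref{deg-1} is a consequence of this very lemma (via Proposition \ref{cor1} and Remark \ref{conv-false}), so that appeal would be circular. Fortunately your corrected reduction only uses $P(x_0,y_0,t_0)=0$ and $\beta(t_0)\neq 0$, which gives $\frac{\partial\tilde P}{\partial t}(x_0,y_0,t_0)=\beta(t_0)\frac{\partial P}{\partial t}(x_0,y_0,t_0)$; the closing sentence about being ``entitled to assume'' Subsection \ref{subsec-comp} should simply be dropped. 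Second, the step ``the offset branch is singular iff $\phi_d'(t_0)=0$'' is not a purely formal equivalence: vanishing speed of an analytic parametrization does not by itself force a singular place (the local parametrization must be primitive), and conversely one must know that a singular place forces the speed to vanish. The paper sidesteps this by reducing to the curvature condition $k(t_0)=-1/d$ and citing the classical characterization of offset local singularities at regular generator points (page 163 of \cite{Farouki}, Section 2.5 of \cite{Far1}); you should do the same rather than rely on the ``simplicity'' heuristic.
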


\begin{proof} 
Let us assume that $(x_0,y_0)$ belongs to the exterior offset; we can argue in a similar way if $(x_0,y_0)$ belongs to the interior offset. Since $(x_0,y_0)\in {\mathcal O}_d({\mathcal C})$ and $(x_0,y_0)$ is generated by $t_0$, then 
\[P(x_0,y_0,t_0)=Q(x_0,y_0,t_0)=0\]and therefore
\[\tilde{P}(x_0,y_0,t_0)=\tilde{Q}(x_0,y_0,t_0)=0.\]
Furthermore, let
\begin{equation} \label{laP}
P^{\star}(x,y,t)=(x-\mathcal{X}(t))\cdot \mathcal{X}'(t)+(y-\mathcal{Y}(t))\cdot \mathcal{Y}'(t).
\end{equation}
A direct calculation shows that 
\begin{equation} \label{Pstar}
P^{\star}(x,y,t)=\frac{1}{W^3(t)}\cdot \tilde{P}(x,y,t).
\end{equation}
Furthermore, since $(x_0,y_0)$ is affine $W(t_0)\neq 0$. Since additionally $\tilde{P}(x_0,y_0,t_0)=0$, we get $P^{\star}(x_0,y_0,t_0)=0$. Furthermore, by differentiating in \eqref{Pstar} we have 
\[\frac{\partial P^{\star}}{\partial t}=\frac{-3W'(t)}{W^4(t)}\cdot \tilde{P}(x,y,t)+\frac{1}{W^3(t)}\cdot \frac{\partial \tilde{P}}{\partial t}.\]
Hence, since $\tilde{P}(x_0,y_0,t_0)=0$ and $W(t_0)\neq 0$, $\frac{\partial P^{\star}}{\partial t}(x_0,y_0,t_0)=0$ iff $\frac{\partial \tilde{P}}{\partial t}(x_0,y_0,t_0)=0$. So let us see that, under the considered hypotheses, $\frac{\partial P^{\star}}{\partial t}(x_0,y_0,t_0)=0$ iff $(x_0,y_0)$ is a local singularity of ${\mathcal O}_d({\mathcal C})$. 

By differentiating \eqref{laP} with respect to $t$, and evaluating at $(x_0,y_0,t_0)$, we get 
\[
\frac{\partial P^{\star}}{\partial t}(x_0,y_0,t_0)=(x_0-\mathcal{X}(t_0))\cdot \mathcal{X}''(t_0)+(y_0-\mathcal{Y}(t_0))\cdot \mathcal{Y}''(t_0)-\left((\mathcal{X}'(t_0))^2+(\mathcal{Y}'(t_0))^2\right).
\]
Thus $\frac{\partial P^{\star}}{\partial t}(x_0,y_0,t_0)=0$ is equivalent to
\begin{equation} \label{eszero}
(x_0-\mathcal{X}(t_0))\cdot \mathcal{X}''(t_0)+(y_0-\mathcal{Y}(t_0))\cdot \mathcal{Y}''(t_0)=(\mathcal{X}'(t_0))^2+(\mathcal{Y}'(t_0))^2.
\end{equation}
Since $(U(t_0),V(t_0))\neq (0,0)$ by hypothesis, $(\mathcal{X}'(t_0),\mathcal{Y}'(t_0))\neq 0$. Therefore the normal vector to ${\mathcal C}$ at the point $({\mathcal X}(t_0),{\mathcal Y}(t_0))$ is $\pm(\mathcal{Y}'(t_0),-\mathcal{X}'(t_0))$, where we consider the $+$ sign if $(x_0,y_0)$ belongs to the exterior offset, as it is our case, and the $-$ sign if $(x_0,y_0)$ belongs to the interior offset. Additionally, since $\tilde{Q}(x_0,y_0,t_0)=0$, from the definition of $\tilde{Q}(x,y,t)$ we get that the modulus of $(x_0-\mathcal{X}(t_0),y_0-\mathcal{Y}(t_0))$ is equal to $d$. Hence under our hypotheses,
\begin{equation} \label{vector}
(x_0-\mathcal{X}(t_0),y_0-\mathcal{Y}(t_0))=\frac{d}{\sqrt{\mathcal{X}'^2(t_0)+\mathcal{Y}'^2(t_0)}}\cdot (\mathcal{Y}'(t_0),-\mathcal{X}'(t_0)).
\end{equation}
The right hand-side of \eqref{eszero} is equal to the dot product of $(x_0-\mathcal{X}(t_0),y_0-\mathcal{Y}(t_0))$ and $(\mathcal{X}''(t_0),\mathcal{Y}''(t_0))$. Therefore, taking \eqref{vector} into account, we get that $\frac{\partial P^{\star}}{\partial t}(x_0,y_0,t_0)=0$ is equivalent to
\begin{equation} \label{cond}
\frac{\mathcal{X}'(t_0)\mathcal{Y}''(t_0)-\mathcal{X}''(t_0)\mathcal{Y}'(t_0)}{\left[\mathcal{X}'^2(t_0)+\mathcal{Y}'^2(t_0)\right]^{3/2}}=-\frac{1}{d}.
\end{equation}
This equality can be written as $k(t_0)=-\frac{1}{d}$, where $k(t_0)$ is the curvature of ${\mathcal C}$ at the point $(\mathcal{X}(t_0),\mathcal{Y}(t_0))$. But under our hypotheses, $k(t_0)=-\frac{1}{d}$ is equivalent to $(x_0,y_0)$ being a local singularity of ${\mathcal O}_d({\mathcal C})$ (see page 163 of \cite{Farouki}).

\end{proof}

Let $(x_0,y_0)\in {\mathcal O}_d({\mathcal C})$, and let $G_{x_0,y_0}(t)=\mbox{gcd}(P(x_0,y_0,t),Q(x_0,y_0,t))$. 
The roots of $G_{x_0,y_0}(t)$ are exactly the affine $t$-values generating $(x_0,y_0)$ via $\phi_d(t)$. The behavior of $G_{x_0,y_0}(t)$ is analyzed in the following result, which follows from Lemma \ref{lem-1}.

\begin{proposition} \label{cor1}
Let $(x_0,y_0)\in {\mathcal O}_d({\mathcal C})$, $(x_0,y_0)\neq P_{\pm\infty}$, be generated by some real $t$-value $t_0$ satisfying $(U(t_0),V(t_0))\neq (0,0)$. If $(x_0,y_0)$ is a singularity of ${\mathcal O}_d({\mathcal C})$ then $\deg(G_{x_0,y_0}(t))>1$.
\end{proposition}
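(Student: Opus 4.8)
The plan is to distinguish the two types of singularity described in Subsection~\ref{sec-sing}. If $(x_0,y_0)$ is a singularity of $\mathcal{O}_d(\mathcal{C})$, then either it is a self-intersection (the center of at least two places), or, being a singularity that is not a self-intersection, it is the center of a single singular place, i.e.\ a local singularity. Since the roots of $G_{x_0,y_0}(t)$ are precisely the affine $t$-values generating $(x_0,y_0)$ via $\phi_d(t)$, in the first case it is enough to exhibit two distinct such values, and in the second case to prove that the single generating value $t_0$ is a root of $G_{x_0,y_0}(t)$ of multiplicity at least two.

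For the self-intersection case I would use that, under our standing hypotheses, $\mathcal{O}_d(\mathcal{C})$ is simple and $\varphi$ is a birational map from $\mathcal{M}$ onto $\mathcal{O}_d(\mathcal{C})$. Hence the two (or more) places of $\mathcal{O}_d(\mathcal{C})$ centered at $(x_0,y_0)$ correspond to at least two distinct points $(t_1,\alpha_1)\neq(t_2,\alpha_2)$ of $\mathcal{M}$ having the same $\varphi$-image. These must have $t_1\neq t_2$: if $t_1=t_2$, then from $\alpha_i^2=\widehat{U}^2(t_i)+\widehat{V}^2(t_i)$ one gets $\alpha_1=-\alpha_2\neq 0$, and equating the two expressions \eqref{t1} for the $\varphi$-image forces $\widehat{U}(t_1)=\widehat{V}(t_1)=0$, which is impossible because $\gcd(\widehat{U},\widehat{V})=1$. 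Moreover $t_1$ and $t_2$ are finite, since the only points of the offset generated by $t=\infty$ are $P_{\pm\infty}$ and $(x_0,y_0)\neq P_{\pm\infty}$. Thus $G_{x_0,y_0}(t)$ has at least two distinct roots, and $\deg(G_{x_0,y_0}(t))\geq 2$.

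For the local singularity case, the hypotheses of Lemma~\ref{lem-1} are satisfied, so $\frac{\partial P}{\partial t}(x_0,y_0,t_0)=0$; since also $P(x_0,y_0,t_0)=0$, the value $t_0$ is a root of $P(x_0,y_0,t)$ of multiplicity $\geq 2$. I would then show that $t_0$ is likewise a multiple root of $Q(x_0,y_0,t)$. Writing $g(t)=(x_0-\mathcal{X}(t))^2+(y_0-\mathcal{Y}(t))^2-d^2$, the expansion of $\tilde{Q}$ recalled in Subsection~\ref{subsec-off} gives $\tilde{Q}(x_0,y_0,t)=W^2(t)\,g(t)$; since $W(t_0)\neq 0$ (because $(x_0,y_0)$ is affine) and $\mu(t_0)\neq 0$, the multiplicity of $t_0$ as a root of $Q(x_0,y_0,t)=\tilde{Q}(x_0,y_0,t)/\mu(t)$ equals its multiplicity as a root of $g$. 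Now $g(t_0)=0$ because $(x_0,y_0)$ lies at distance $d$ from $\phi(t_0)$, and $g'(t)=-2P^{\star}(x_0,y_0,t)$, which by \eqref{Pstar} equals $-2\,\beta(t)W^{-3}(t)\,P(x_0,y_0,t)$ and therefore vanishes at $t_0$; hence $t_0$ is a root of $g$, and so of $Q(x_0,y_0,t)$, of multiplicity $\geq 2$ (in fact $\geq 3$, since $g''(t_0)=-2\frac{\partial P^{\star}}{\partial t}(x_0,y_0,t_0)=0$ by the computation carried out in the proof of Lemma~\ref{lem-1}). Consequently $(t-t_0)^2$ divides both $P(x_0,y_0,t)$ and $Q(x_0,y_0,t)$, hence it divides their $\gcd$, which is $G_{x_0,y_0}(t)$, and again $\deg(G_{x_0,y_0}(t))\geq 2$.

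The only computations needed are the identities $\tilde{Q}(x,y,t)=W^2(t)\big[(x-\mathcal{X}(t))^2+(y-\mathcal{Y}(t))^2-d^2\big]$ and $P^{\star}(x,y,t)=W^{-3}(t)\tilde{P}(x,y,t)$ (the latter is \eqref{Pstar}), together with the easy remarks that $W(t_0)\neq 0$, $\mu(t_0)\neq 0$ and $\beta(t_0)\neq 0$ follow from $\gcd(X,Y,W)=1$ and $(U(t_0),V(t_0))\neq (0,0)$. I expect the most delicate point to be the self-intersection case: one must argue with care that the two branches through $(x_0,y_0)$ come from two \emph{distinct} and \emph{finite} parameter values, and this is exactly where the simplicity of $\mathcal{O}_d(\mathcal{C})$, the $2{:}1$ structure of the projection $(t,\alpha)\mapsto t$ on $\mathcal{M}$, and the exclusion of $P_{\pm\infty}$ come into play.
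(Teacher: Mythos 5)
Your proposal is correct and follows essentially the same route as the paper's proof: for a self-intersection you produce two distinct finite generating parameter values (the paper argues this directly from simplicity of ${\mathcal O}_d({\mathcal C})$, you via the birational map $\varphi$ and ${\mathcal M}$, which is the same underlying idea), and for a local singularity you use Lemma \ref{lem-1} together with the relation between $\partial Q^{\star}/\partial t$ and $P^{\star}$ to show $t_0$ is a multiple common root of $P(x_0,y_0,t)$ and $Q(x_0,y_0,t)$. Your extra checks (distinctness and finiteness of the two parameter values, nonvanishing of the contents at $t_0$) only make the argument more explicit than the paper's.
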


\begin{proof} If $(x_0,y_0)$ is a self-intersection of ${\mathcal O}_d({\mathcal C})$, since by hypothesis $(x_0,y_0)\neq P_{\pm\infty}$, it must be generated by at least two $t$-values; therefore, $\deg(G_{x_0,y_0}(t))>1$. Suppose now that $(x_0,y_0)$ is a local singularity of ${\mathcal O}_d({\mathcal C})$, not a self-intersection. Since $(x_0,y_0)$ is generated by $t_0$, it follows that $t_0$ is a common root of $P(x_0,y_0,t)$ and $Q(x_0,y_0,t)$. Moreover, by Lemma \ref{lem-1} 
\begin{equation}\label{esta}
P(x_0,y_0,t_0)= \frac{\partial P}{\partial t}(x_0,y_0,t_0)=0.
\end{equation} 
Following the same argument as in the proof of Lemma \ref{lem-1}, we get that \eqref{esta} is equivalent to $$P^{\star}(x_0,y_0,t_0) = \frac{\partial P^{\star}}{\partial t}(x_0,y_0,t_0)=0.$$ Additionally, $Q(x,y,t)$ is the primitive part of the numerator of
 \begin{equation}\label{qstart }
Q^{\star}(x,y,t)=(x-\mathcal{X}(t))^2+(y-\mathcal{Y}(t))^2-d^2, 
\end{equation}
and $\frac{\partial Q^{\star}}{\partial t}=2P^{\star}$; in fact, one can check that the numerator of $Q^{\star}(x,y,t)$ is $\tilde{Q}(x,y,t)$. Hence,
\begin{equation}\label{qstar}
Q^{\star}(x_0,y_0,t_0)=\frac{\partial Q^{\star}}{\partial t}(x_0,y_0,t_0)=0.
\end{equation}
Furthermore, since $W(t_0)\neq 0$ the content of $\tilde{Q}(x,y,t)$ cannot vanish at $t=t_0$, and therefore \eqref{qstar} implies 
$Q(x_0,y_0,t_0)=\frac{\partial Q}{\partial t}(x_0,y_0,t_0)=0$. As a consequence, $t_0$ is also a root of both $\frac{\partial P }{\partial t}(x_0,y_0,t)$ and $\frac{\partial Q }{\partial t}(x_0,y_0,t)$. Thus the multiplicity of $t_0$ as a root of $G_{x_0,y_0}(t)$ is greater than 1.
\end{proof}


\begin{remark} \label{conv-false}
If $(x_0,y_0)\neq P_{\pm \infty}$ is a real, non-isolated point of ${\mathcal O}_d({\mathcal C})$, whenever $\deg(G_{x_0,y_0}(t))>1$ then $(x_0,y_0)$ must be a singularity of the curve ${\mathcal H}$ defined by $H(x,y)=\mathrm{Res}_t(P(x,y,t),Q(x,y,t))$. Indeed, if $G_{x_0,y_0}(t)$ has just one multiple root $t_0$ then since $G_{x_0,y_0}(t)$ is a real polynomial, $t_0\in {\Bbb R}$; but then $(x_0,y_0)$ must be a local singularity because of Lemma \ref{lem-1}. If 
$G_{x_0,y_0}(t)$ has different roots then $(x_0,y_0)$ is a self-intersection of ${\mathcal H}$. By Proposition \ref{libre} in Appendix III, the component of ${\mathcal H}$ defining ${\mathcal O}_d({\mathcal C})$ is square-free, i.e. $F(x,y)$  (see page 7) is the polynomial of \emph{minimum} degree defining ${\mathcal O}_d({\mathcal C})$. Therefore if $G_{x_0,y_0}(t)$, with $(x_0,y_0)\in{\mathcal O}_d({\mathcal C})$, has different roots then $(x_0,y_0)$ is either a self-intersection of ${\mathcal O}_d({\mathcal C})$, or the intersection of ${\mathcal O}_d({\mathcal C})$ with some spurious factor (see Section \ref{subsec-off}). So if $\deg(G_{x_0,y_0}(t))>1$ the point $(x_0,y_0)$ is either a singularity of ${\mathcal O}_d({\mathcal C})$, or an intersection point between ${\mathcal O}_d({\mathcal C})$ and a spurious factor of ${\mathcal H}$. Hence, the converse of Proposition \ref{cor1} is not true, in general. 
\end{remark}

From Remark \ref{conv-false}, we get that $\deg(G_{x_0,y_0}(t))=1$ for almost all points of $(x_0,y_0)\in {\mathcal O}_d({\mathcal C})$. Hence, the following result follows.

\begin{theorem}\label{deg-1}
The degree of ${\mathcal G}(x,y,t)$ in the $t$ variable, is equal to 1.
\end{theorem}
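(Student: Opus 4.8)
The plan is to recall that $\mathcal{G}(x,y,t) = \gcd_{\mathbb{R}(\mathcal{O}_d(\mathcal{C}))[t]}(P,Q)$, and to combine this with the Fundamental Property of subresultants (Theorem \ref{th-1}) and the specialization property (Theorem \ref{th-2}). The key observation is that, up to a nonzero factor in $\mathbb{R}(\mathcal{O}_d(\mathcal{C}))$, $\mathcal{G}(x,y,t)$ equals the first nonzero subresultant polynomial $\mathbf{Subres}_k(P, n_P, Q, n_Q)$ in the subresultant chain of $P,Q$ viewed as polynomials in $t$ with coefficients in $\mathbb{R}(\mathcal{O}_d(\mathcal{C}))$. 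For a generic point $(x_0,y_0)\in\mathcal{O}_d(\mathcal{C})$, the leading coefficients of $P,Q$ in $t$ do not vanish (by Lemma \ref{l-aux}, they vanish simultaneously only at $P_{\pm\infty}$), so by Theorem \ref{th-2} the subresultant chain specializes correctly at $(x_0,y_0)$; hence $\mathbf{Subres}_k(P,n_P,Q,n_Q)$ evaluated at $(x_0,y_0)$ is, up to a nonzero constant, the first nonzero subresultant in the chain of $P(x_0,y_0,t), Q(x_0,y_0,t)$, which by Theorem \ref{th-1} is $G_{x_0,y_0}(t) = \gcd(P(x_0,y_0,t), Q(x_0,y_0,t))$ in $\mathbb{R}[t]$.

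Next I would use Remark \ref{conv-false}, which tells us that $\deg(G_{x_0,y_0}(t)) = 1$ for almost all $(x_0,y_0)\in\mathcal{O}_d(\mathcal{C})$: indeed, if $\deg(G_{x_0,y_0}(t)) > 1$ then $(x_0,y_0)$ is either a singularity of $\mathcal{O}_d(\mathcal{C})$ or an intersection point of $\mathcal{O}_d(\mathcal{C})$ with a spurious factor of $\mathcal{H}$, and in either case such points form a proper Zariski-closed (hence finite) subset of the curve. Therefore the index $k$ at which the first nonzero subresultant appears must be $k=1$: if it were $k\geq 2$, then for generic $(x_0,y_0)$ the specialized first nonzero subresultant $G_{x_0,y_0}(t)$ would have degree $\geq 2$ on a dense open subset — wait, more carefully: the first nonzero subresultant in the chain has degree equal to its index when it is non-defective, and by Theorem \ref{th-1} it equals the $\gcd$ and is non-defective. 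So $k = \deg(\mathcal{G}(x,y,t)) = \deg G_{x_0,y_0}(t)$ for generic $(x_0,y_0)$, forcing $k=1$.

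To assemble this cleanly, I would: (i) verify that $P,Q$ as elements of $\mathbb{R}(\mathcal{O}_d(\mathcal{C}))[t]$ satisfy the hypotheses of Theorem \ref{th-1}, namely that $\gcd(P,Q)$ is a proper divisor of both — this holds because $\deg_t \mathcal{G} < \deg_t P$ and $< \deg_t Q$ generically, which is exactly what Remark \ref{conv-false} guarantees; (ii) identify $\mathcal{G}(x,y,t)$ with the first nonzero subresultant $\mathbf{Subres}_k$ via Theorem \ref{th-1}; (iii) specialize at generic $(x_0,y_0)$ using Theorem \ref{th-2}, legitimized by Lemma \ref{l-aux}; (iv) conclude $k = 1$ from the generic statement $\deg G_{x_0,y_0}(t) = 1$ of Remark \ref{conv-false}. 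The main obstacle I anticipate is the bookkeeping in step (iii): one must argue that "almost all" $(x_0,y_0)\in\mathcal{O}_d(\mathcal{C})$ simultaneously avoid the vanishing locus of the relevant principal subresultant coefficients (so that the degree of the specialization is preserved), avoid $P_{\pm\infty}$, avoid the singularities of $\mathcal{O}_d(\mathcal{C})$, avoid the intersections with spurious factors, and are generated by a $t_0$ with $(U(t_0),V(t_0))\neq(0,0)$ — each of these is a finite or proper closed condition on the irreducible curve $\mathcal{O}_d(\mathcal{C})$, so their union is still a proper closed subset, but this needs to be stated carefully to make the "generic point" argument rigorous.
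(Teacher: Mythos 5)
Your proposal is correct and follows essentially the same route as the paper, which deduces the theorem directly from the observation (Remark \ref{conv-false} plus Lemma \ref{l-aux}) that $\deg(G_{x_0,y_0}(t))=1$ for almost all $(x_0,y_0)\in{\mathcal O}_d({\mathcal C})$. In fact your write-up makes explicit the specialization bookkeeping (Theorems \ref{th-1} and \ref{th-2}) that the paper leaves implicit in its one-line deduction and only invokes after the theorem.
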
 


From Theorem \ref{th-1} and Theorem \ref{deg-1}, one has that 
\[{\mathcal G}(x,y,t)=\mathbf{Subres}_1({\mathcal P},n,{\mathcal Q},m)(t),\]where ${\mathcal P},{\mathcal Q}$ represent the polynomials $P,Q$ seen as elements of ${{\Bbb R}({\mathcal O}_d({\mathcal C}))}$, and 
$n,m$ are the degrees of ${\mathcal P},{\mathcal Q}$ in ${{\Bbb R}({\mathcal O}_d({\mathcal C}))}$. However, from Lemma \ref{l-aux} we have $n=\deg_t(P(x,y,t))$ and $m=\deg_t(Q(x,y,t))$; in other words, the leading coefficients of the polynomials $P(x,y,t)$ and $Q(x,y,t)$ with respect to $t$ are not multiples of $F(x,y)$, the implicit equation of ${\mathcal O}_d({\mathcal C})$. Therefore, by Theorem \ref{th-2}, in order to compute ${\mathcal G}(x,y,t)$ we can compute $\mathbf{Subres}_1(P,n,Q,m)$ in the domain ${\Bbb R}[x,y][t]$, and then consider the coefficients of the resulting polynomial modulo $F(x,y)$. After writing  
\[
\mathbf{Subres}_1(P,n,Q,m)(t)=\mathbf{sres}_1(x,y)\,t+\mbox{sr}(x,y),
\]
we arrive at the following result.

\begin{theorem} \label{main} The inverse mapping \eqref{inv} satisfies
\begin{equation} \label{t-inv}
t^{-1}(x,y)=-\frac{\mathrm{sr}(x,y)}{\mathbf{sres}_1(x,y)}
\end{equation}
for $(x,y)\in {\mathcal O}_d({\mathcal C})$.
 
\end{theorem}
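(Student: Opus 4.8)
The plan is to treat Theorem \ref{main} as essentially a repackaging of facts already established: Theorem \ref{deg-1} fixes the $t$-degree of $\mathcal{G}(x,y,t)$, the Fundamental Property of subresultants (Theorem \ref{th-1}) locates $\mathcal{G}$ inside the subresultant chain, Theorem \ref{th-2} lets us compute that chain in the polynomial ring $\mathbb{R}[x,y][t]$ and then specialize it, Lemma \ref{l-aux} supplies the degree hypotheses those two theorems need, and Remark \ref{conv-false} provides the generic behaviour of $G_{x_0,y_0}(t)$ needed to recognize the resulting root as $t^{-1}(x,y)$.

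Concretely, I would first work in $\mathbb{K}[t]$ with $\mathbb{K}=\mathbb{R}(\mathcal{O}_d(\mathcal{C}))$, which is a field since $\mathcal{O}_d(\mathcal{C})$ is irreducible. By Lemma \ref{l-aux}, $\mathrm{lc}_t(P)$ and $\mathrm{lc}_t(Q)$ are not multiples of $F(x,y)$, so the images $\mathcal{P},\mathcal{Q}\in\mathbb{K}[t]$ of $P,Q$ still have $t$-degrees $n=\deg_t P$ and $m=\deg_t Q$; since $\deg_t\mathcal{G}=1$ by Theorem \ref{deg-1} (and $n,m>1$ in the relevant cases), we have $\mathcal{G}\neq\mathcal{P}$, $\mathcal{G}\neq\mathcal{Q}$, so Theorem \ref{th-1} applies: $\mathbf{Subres}_1(\mathcal{P},n,\mathcal{Q},m)$ is non-defective and proportional over $\mathbb{K}$ to $\mathcal{G}$, and in particular $\mathbf{sres}_1(x,y)\not\equiv 0\pmod{F(x,y)}$. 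Then I would apply Theorem \ref{th-2} to the homomorphism $\psi:\mathbb{R}[x,y]\to\mathbb{K}$ given by reduction modulo $(F)$ followed by passage to the fraction field; since, again by Lemma \ref{l-aux}, $\psi$ does not drop the $t$-degrees of $P$ and $Q$, we get $\mathbf{Subres}_1(\mathcal{P},n,\mathcal{Q},m)=\psi\bigl(\mathbf{Subres}_1(P,n,Q,m)\bigr)$. Writing $\mathbf{Subres}_1(P,n,Q,m)(t)=\mathbf{sres}_1(x,y)\,t+\mathrm{sr}(x,y)$ as in the statement, this shows that in $\mathbb{K}[t]$ the polynomial $\mathcal{G}(x,y,t)$ is a nonzero $\mathbb{K}$-multiple of $\mathbf{sres}_1(x,y)\,t+\mathrm{sr}(x,y)$, hence its unique root in $\mathbb{K}$ is $-\mathrm{sr}(x,y)/\mathbf{sres}_1(x,y)$, well defined because $\mathbf{sres}_1(x,y)$ is nonzero in $\mathbb{K}$.

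It then remains to identify this root with the rational function $t^{-1}(x,y)$ of \eqref{inv}. I would argue at a generic point and then use that two elements of $\mathbb{K}$ coinciding on a dense subset of $\mathcal{O}_d(\mathcal{C})$ are equal. Choosing $(x_0,y_0)\in\mathcal{O}_d(\mathcal{C})$ outside a suitable proper closed subset, one may assume simultaneously: $(x_0,y_0)\neq P_{\pm\infty}$; $(x_0,y_0)$ is generated by a single value $t_0$ with $(U(t_0),V(t_0))\neq(0,0)$, and is neither a singularity of $\mathcal{O}_d(\mathcal{C})$ nor a point of a spurious factor of $\mathcal{H}$, so by Remark \ref{conv-false} the polynomial $G_{x_0,y_0}(t)$ has degree $1$ with unique root $t_0=t^{-1}(x_0,y_0)$; $\deg_t P(x_0,y_0,t)=n$, $\deg_t Q(x_0,y_0,t)=m$ and $\mathbf{sres}_1(x_0,y_0)\neq 0$; and $H(x_0,y_0)=0$, so $\mathbf{Subres}_0\bigl(P(x_0,y_0,t),n,Q(x_0,y_0,t),m\bigr)=0$. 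Applying Theorem \ref{th-2} to evaluation at $(x_0,y_0)$ and Theorem \ref{th-1} once more, $G_{x_0,y_0}(t)$ is proportional to $\mathbf{sres}_1(x_0,y_0)\,t+\mathrm{sr}(x_0,y_0)$, whence $t^{-1}(x_0,y_0)=t_0=-\mathrm{sr}(x_0,y_0)/\mathbf{sres}_1(x_0,y_0)$, and \eqref{t-inv} follows.

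Since the substantive mathematics is already carried by the earlier results, the main obstacle is the bookkeeping in the last step: carefully describing the finite exceptional set to be avoided (points where the $t$-degrees of $P$ or $Q$ drop, where $\mathbf{sres}_1$ vanishes, where $G_{x_0,y_0}$ is degenerate, or which lie on a spurious factor of $\mathcal{H}$) and checking it is a proper closed subset, so the generic identity is genuinely an identity of rational functions on $\mathcal{O}_d(\mathcal{C})$. One also has to confirm that the ``up to a nonzero factor'' indeterminacies coming from Theorems \ref{th-1} and \ref{th-2} are harmless, since they cancel in the ratio $-\mathrm{sr}/\mathbf{sres}_1$.
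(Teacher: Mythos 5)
Your proof is correct and takes essentially the same route as the paper's: combine Theorem \ref{deg-1} (degree of $\mathcal{G}$ is $1$), the Fundamental Property (Theorem \ref{th-1}), the specialization property (Theorem \ref{th-2}), and Lemma \ref{l-aux} to identify $\mathcal{G}(x,y,t)$ with the reduction of $\mathbf{Subres}_1(P,n,Q,m)$ modulo $F(x,y)$, and then read off $t^{-1}(x,y)$ as minus the constant coefficient over the leading coefficient. You simply spell out the final identification of that root with $t^{-1}(x,y)$ via a genericity argument, a step the paper states without elaboration.
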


\begin{proof} Let ${\mathcal G}(x,y,t)=B(x,y)t+A(x,y)$. Then $t^{-1}(x,y)=-\frac{A(x,y)}{B(x,y)}$. However since ${\mathcal G}(x,y,t)=\mathbf{Subres}_1(P,n,Q,m)(t)\mbox{ mod }F(x,y)$, we have 
\[B(x,y)=\mathbf{sres}_1(x,y) \mbox{ mod }F(x,y)\]and \[A(x,y)=\mbox{sr}(x,y)\mbox{ mod }F(x,y).\]But then for $(x,y)\in {\mathcal O}_d({\mathcal C})$, i.e. whenever $F(x,y)=0$, we have \eqref{t-inv}.
\end{proof}

\begin{remark} The function $t^{-1}(x,y)$ is not necessarily defined at every point of ${\mathcal O}_d({\mathcal C})$. In fact, this is the crucial idea to find the self-intersections of ${\mathcal O}_d({\mathcal C})$. 
\end{remark}

\section{Main result and complete algorithm.}\label{final-th}

The following result, derived from the ideas and results in Section \ref{sec-inv}, shows how to compute the real singularities of $\mathcal{O}_d(\mathcal{C})$ not coming from local singularities of ${\mathcal C}$. 

 \begin{proposition}\label{mainpropo}
If $(x_0,y_0)\in {\mathcal O}_d({\mathcal C})$ is a non-isolated, real affine singularity of $\mathcal{O}_d(\mathcal{C})$, generated by $t_0\in {\Bbb R}$, with $(U(t_0),V(t_0))\neq (0,0)$, then there exists $\alpha_0\in {\Bbb R}$, $\alpha^2_0=\widehat{U}^2(t_0)+\widehat{V}^2(t_0)$, such that $(x_0,y_0)=(x(t_0,\alpha_0),y(t_0,\alpha_0))$ and
\begin{equation}\label{eq-fundam}
\mathbf{sres}_1(x(t_0,\alpha_0),y(t_0,\alpha_0))=0.
\end{equation}
\end{proposition}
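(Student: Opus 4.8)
The plan is to leverage the birational correspondence $\varphi: {\mathcal M} \to {\mathcal O}_d({\mathcal C})$ established in Section~\ref{sec-main}, together with the explicit description of the inverse $t^{-1}(x,y) = -\mathrm{sr}(x,y)/\mathbf{sres}_1(x,y)$ from Theorem~\ref{main}. First I would observe that, since $(x_0,y_0)$ is generated by the real value $t_0$ with $(U(t_0),V(t_0))\neq(0,0)$, we have $\widehat U^2(t_0)+\widehat V^2(t_0)\neq 0$ (here using that $\gcd(\widehat U,\widehat V)$ is constant, so $\widehat U,\widehat V$ cannot both vanish at $t_0$), hence there is a real $\alpha_0$ with $\alpha_0^2 = \widehat U^2(t_0)+\widehat V^2(t_0)$, and the sign of $\alpha_0$ can be chosen so that $\varphi(t_0,\alpha_0) = (x(t_0,\alpha_0),y(t_0,\alpha_0))$ reproduces the correct branch (exterior or interior) through $(x_0,y_0)$, matching the offset point. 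This gives $(x_0,y_0) = (x(t_0,\alpha_0),y(t_0,\alpha_0))$.

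Next I would argue that $\mathbf{sres}_1$ must vanish at $(x_0,y_0)$. The key point is that $(x_0,y_0)$, being a non-isolated real singularity of ${\mathcal O}_d({\mathcal C})$, cannot have a well-defined preimage under $\varphi^{-1}$. If $(x_0,y_0)$ is a self-intersection (not generated by $t=\infty$), there are at least two distinct parameter values $t_1\neq t_2$ generating it, so $t^{-1}(x,y)$ cannot take a single value at $(x_0,y_0)$; if $(x_0,y_0)$ is a local singularity that is not a self-intersection, then by Proposition~\ref{cor1} we still have $\deg(G_{x_0,y_0}(t))>1$, and by the argument in its proof $t_0$ is a multiple root of $G_{x_0,y_0}(t)$, so again $\mathbf{Subres}_1(P,n,Q,m)(t)$ specialized at $(x_0,y_0)$ fails to recover $t_0$ as a simple root in the expected way. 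In both cases the rational function $t^{-1}(x,y) = -\mathrm{sr}(x,y)/\mathbf{sres}_1(x,y)$ degenerates at $(x_0,y_0)$, which forces $\mathbf{sres}_1(x_0,y_0) = 0$. Concretely, I would make this precise by invoking the fundamental property of subresultants (Theorem~\ref{th-1}) applied to $P(x_0,y_0,t)$, $Q(x_0,y_0,t)$: since $\gcd(P(x_0,y_0,t),Q(x_0,y_0,t)) = G_{x_0,y_0}(t)$ has degree $>1$, the first nonzero subresultant polynomial in the chain has index $>1$, so in particular $\mathbf{Subres}_1$ is identically zero when specialized at $(x_0,y_0)$ --- which yields both $\mathbf{sres}_1(x_0,y_0)=0$ and $\mathrm{sr}(x_0,y_0)=0$. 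Here I must be careful that specialization of subresultants behaves well: by Theorem~\ref{th-2} and Lemma~\ref{l-aux}, the leading coefficients $\mathrm{lc}_t(P)$, $\mathrm{lc}_t(Q)$ do not vanish at $(x_0,y_0)$ (as $(x_0,y_0)\neq P_{\pm\infty}$), so $\mathbf{Subres}_1(P(x_0,y_0,\cdot),n,Q(x_0,y_0,\cdot),m)$ is precisely the specialization of $\mathbf{Subres}_1(P,n,Q,m)$, and vanishing of the latter at $(x_0,y_0)$ is equivalent to $\mathbf{sres}_1(x_0,y_0) = 0 = \mathrm{sr}(x_0,y_0)$.

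Putting this together, equation~\eqref{eq-fundam} follows: $\mathbf{sres}_1(x(t_0,\alpha_0),y(t_0,\alpha_0)) = \mathbf{sres}_1(x_0,y_0) = 0$.

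The main obstacle I anticipate is the careful treatment of the specialization step and the sign of $\alpha_0$. One must verify that passing to $x=x_0$, $y=y_0$ does not lower the degrees of $P$ or $Q$ in $t$ in a way that would make $\mathbf{Subres}_1$ specialize only up to proportionality (the second case of Theorem~\ref{th-2}) --- this is exactly why the hypothesis $(x_0,y_0)\neq P_{\pm\infty}$ and Lemma~\ref{l-aux} are needed. A secondary subtlety is ensuring the chosen $\alpha_0 \in {\Bbb R}$ really corresponds, through $\varphi$, to the same branch of the offset on which $(x_0,y_0)$ was defined via $\phi_d$, i.e. reconciling the $\pm$ in~\eqref{offset} with the sign of $\alpha_0$ in~\eqref{t1}; this is a matter of bookkeeping with the normal vector orientation but should be spelled out.
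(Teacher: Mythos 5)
Your proposal follows essentially the same route as the paper: use Proposition~\ref{cor1} to get $\deg G_{x_0,y_0}(t)>1$, appeal to Theorem~\ref{th-2} and Lemma~\ref{l-aux} to ensure good specialization of the subresultant chain, and apply Theorem~\ref{th-1} to conclude $\mathbf{sres}_1(x_0,y_0)=0$. However, two points need attention.

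First, your claim that ``the leading coefficients $\mathrm{lc}_t(P)$, $\mathrm{lc}_t(Q)$ do not vanish at $(x_0,y_0)$'' is stronger than what Lemma~\ref{l-aux} gives: the lemma only asserts that they do not \emph{simultaneously} vanish away from $P_{\pm\infty}$, and indeed (see Lemmas~\ref{circulomalo} and~\ref{rectamala} in Appendix~I) one of them may vanish on the circle or normal line through $p_\infty$. Consequently Theorem~\ref{th-2} only guarantees specialization of $\mathbf{Subres}_1$ \emph{up to a nonzero constant factor}, not ``precisely'' as you state. This is harmless for the conclusion, since proportional to zero is still zero, but the reasoning as written is inaccurate and should invoke the second alternative of Theorem~\ref{th-2}, as the paper does (``up to, perhaps, a constant'').

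Second, you leave out the case $(x_0,y_0)=P_{\pm\infty}$. Nothing in the hypotheses excludes $(x_0,y_0)$ from coinciding with $P_{\pm\infty}$ — it can happen that $P_{\pm\infty}$ is \emph{also} generated by a finite real $t_0$, which makes it a self-intersection and places it squarely within the statement. But then Proposition~\ref{cor1} (which assumes $(x_0,y_0)\neq P_{\pm\infty}$) is not applicable, and neither is your specialization argument. The paper dispatches this case separately: by Lemma~\ref{l-aux} both leading coefficients vanish at $P_{\pm\infty}$, so every submatrix appearing in the determinant polynomial $\mathbf{detpol}(\mathbf{Sylv}_1)$ has a zero first column (Definition~\ref{defsyl}), whence $\mathbf{sres}_1(x_0,y_0)=0$ trivially. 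You should add this branch to your argument.
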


\begin{proof}
Let $(x_0,y_0)$ be a singularity of $\mathcal{O}_d(\mathcal{C})$. Since $(x_0,y_0)$  is generated by a real value $t_0$, there exists $\alpha_0$ such that $(t_0,\alpha_0)$ in $\mathcal{M}$ and $(x_0,y_0) = (x(t_0,\alpha_0),y(t_0,\alpha_0))$. 
If $(x_0,y_0)\neq P_{\pm\infty}$, then we have $\mbox{deg}(G_{x_0,y_0}(t))>1$ by Proposition \ref{cor1}. On the other hand, by Lemma \ref{l-aux} we deduce that either $\mbox{deg}(P(x_0,y_0,t))=\mbox{deg}_t(P(x,y,t))$ or $\mbox{deg}(Q(x_0,y_0,t))=\mbox{deg}_t(Q(x,y,t))$. Furthermore, from Theorem \ref{th-2} and  Lemma \ref{l-aux}, 
the subresultant chain of $P,Q$ specializes well for $x=x_0,y=y_0$ (up to, perhaps, a constant). From Theorem \ref{th-1}, we can compute $G_{x_0,y_0}(t)$ as the first nonzero subresultant in this sequence. Since $\mbox{deg}(G_{x_0,y_0}(t))>1$, we deduce that $\mathbf{sres}_1(x_0,y_0)$ must vanish. If $(x_0,y_0)=P_{\pm \infty}$, which means that $P_{\pm \infty}$ is also generated by at least one real $t$,  then $\mathbf{sres}_1(x_0,y_0)=0$ by Lemma \ref{l-aux} and Definition \ref{defsyl}.
\end{proof} 

Therefore, if ${\mathcal C}$ does not have any local singularities, Proposition \ref{mainpropo} provides a method to find all the singularities of the offset. However if ${\mathcal C}$ has local singularities, then it remains to check that the singularities of ${\mathcal O}_d({\mathcal C})$ coming from local singularities of $\mathcal C$ also satisfy Equation \eqref{eq-fundam}. A proof of this fact is given in Appendix II. As a consequence, the following theorem, which is the main result of the paper, holds.

\begin{theorem}\label{bigth}
If $(x_0,y_0)\in {\mathcal O}_d({\mathcal C})$ is a non-isolated, real affine singularity of $\mathcal{O}_d(\mathcal{C})$, generated by $t_0\in {\Bbb R}$, then there exists $\alpha_0\in {\Bbb R}$, $\alpha^2_0=\widehat{U}^2(t_0)+\widehat{V}^2(t_0)$, such that $(x_0,y_0)=(x(t_0,\alpha_0),y(t_0,\alpha_0))$ and $\mathbf{sres}_1(x(t_0,\alpha_0),y(t_0,\alpha_0))=0$.
\end{theorem}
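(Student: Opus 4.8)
The plan is to reduce Theorem \ref{bigth} to the two cases that have already been treated. Observe that a non-isolated, real affine singularity $(x_0,y_0)$ of $\mathcal{O}_d(\mathcal{C})$ generated by $t_0\in\mathbb{R}$ falls into exactly one of two classes: either $(U(t_0),V(t_0))\neq(0,0)$, or $(U(t_0),V(t_0))=(0,0)$. In the first case, Proposition \ref{mainpropo} applies verbatim and already gives the existence of $\alpha_0\in\mathbb{R}$ with $\alpha_0^2=\widehat{U}^2(t_0)+\widehat{V}^2(t_0)$, the equality $(x_0,y_0)=(x(t_0,\alpha_0),y(t_0,\alpha_0))$, and $\mathbf{sres}_1(x(t_0,\alpha_0),y(t_0,\alpha_0))=0$. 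So the only thing left to prove is the case in which $t_0$ makes both $U$ and $V$ vanish; geometrically this is exactly the situation where $\phi(t_0)$ is a local singularity of the generator curve $\mathcal{C}$ (the speed vector of the parametrization vanishes there), since $\gcd(X,Y,W)=1$ forces $W(t_0)\neq 0$, and then $(U(t_0),V(t_0))=(0,0)$ is equivalent to $(\mathcal{X}'(t_0),\mathcal{Y}'(t_0))=(0,0)$.

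For that remaining case, I would invoke the analysis carried out in Appendix II, which is precisely devoted to showing that the singularities of $\mathcal{O}_d(\mathcal{C})$ arising from local singularities of $\mathcal{C}$ also satisfy Equation \eqref{eq-fundam}. The key point to make explicit is that the definition of $\mathcal{M}$ was deliberately set up with the \emph{reduced} forms $\widehat{U}=U/\gcd(U,V)$ and $\widehat{V}=V/\gcd(U,V)$ rather than with $U,V$ themselves, exactly so that at a $t_0$ with $(U(t_0),V(t_0))=(0,0)$ the value $\widehat{U}^2(t_0)+\widehat{V}^2(t_0)$ is still a well-defined (and generically nonzero) quantity; this is what allows one to produce a genuine point $(t_0,\alpha_0)\in\mathcal{M}$ with $\alpha_0\in\mathbb{R}$ lying over $t_0$, so that $\varphi(t_0,\alpha_0)$ makes sense and equals $(x_0,y_0)$. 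Once $(t_0,\alpha_0)\in\mathcal{M}$ is in hand, one argues as in the proof of Proposition \ref{mainpropo}: the point $(x_0,y_0)$ being a non-isolated singularity of $\mathcal{O}_d(\mathcal{C})$ with $(x_0,y_0)\neq P_{\pm\infty}$ forces $\deg(G_{x_0,y_0}(t))>1$, and by Lemma \ref{l-aux} together with Theorem \ref{th-2} the subresultant chain of $P,Q$ specializes correctly at $x=x_0,y=y_0$ up to a nonzero constant, whence Theorem \ref{th-1} identifies $G_{x_0,y_0}(t)$ as the first nonzero subresultant; since that first nonzero subresultant has degree $>1$, the coefficient $\mathbf{sres}_1(x_0,y_0)$ must vanish. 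The case $(x_0,y_0)=P_{\pm\infty}$, when it is generated by a real $t$, is handled exactly as in Proposition \ref{mainpropo}, via Lemma \ref{l-aux} and Definition \ref{defsyl}.

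Combining the three sub-cases — $(U(t_0),V(t_0))\neq(0,0)$ handled by Proposition \ref{mainpropo}; $(U(t_0),V(t_0))=(0,0)$ handled by Appendix II; and $(x_0,y_0)=P_{\pm\infty}$ handled as in Proposition \ref{mainpropo} — exhausts all possibilities and yields the statement of Theorem \ref{bigth}. I expect the main obstacle to be the local-singularity case, i.e. checking that when the speed vector of $\phi$ vanishes at $t_0$ one can still (a) locate a real $\alpha_0$ over $t_0$ on $\mathcal{M}$ with $(x_0,y_0)=\varphi(t_0,\alpha_0)$, because $\varphi$ as written in \eqref{t1} involves dividing by $\alpha$ and by $W$ and one has to verify nothing degenerates, and (b) re-run the subresultant specialization argument knowing that the generic genus/branch behaviour at such a $t_0$ is more delicate — this is exactly why that verification was deferred to Appendix II rather than included in the main text. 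The rest of the argument is a bookkeeping assembly of results already proved: Lemma \ref{l-aux}, Proposition \ref{cor1}, Theorem \ref{th-1}, Theorem \ref{th-2}, and Proposition \ref{mainpropo}.
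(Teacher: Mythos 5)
Your decomposition is exactly the paper's: the case $(U(t_0),V(t_0))\neq(0,0)$ is Proposition~\ref{mainpropo}, the case $(U(t_0),V(t_0))=(0,0)$ is Corollary~\ref{corol-wanted} of Appendix~II, and $P_{\pm\infty}$ is already handled inside each of those. One small imprecision worth flagging: where you write ``one argues as in the proof of Proposition~\ref{mainpropo}: \dots being a non-isolated singularity \dots forces $\deg(G_{x_0,y_0}(t))>1$,'' that inference is \emph{not} free in the vanishing-speed case, since the ingredient used in Proposition~\ref{mainpropo} (namely Proposition~\ref{cor1}) carries the explicit hypothesis $(U(t_0),V(t_0))\neq(0,0)$; establishing $\deg(G_{x_0,y_0}(t))>1$ when the speed vector vanishes is precisely the content of Lemma~\ref{singular} and Proposition~\ref{geom-mean} in Appendix~II, and those should be quoted at that point rather than appealing to the argument of Proposition~\ref{mainpropo}.
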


\noindent From Theorem \ref{bigth}, the real $t$ values giving rise to the real, non-isolated singularities of ${\mathcal O}_d({\mathcal C})$ are among the solutions of the system:
\[
\mathbf{sres}_1(x(t,\alpha),y(t,\alpha))=0, \mbox{ } \alpha^2=\widehat{U}^2(t)+\widehat{V}^2(t).
\]
Substituting $\alpha^2=\widehat{U}^2(t)+\widehat{V}^2(t)$ in $\mathbf{sres}_1(x(t,\alpha),y(t,\alpha))=0$ leads to a polynomial equation of the type
\begin{equation} \label{uno}
\xi_1(t)\,\alpha+\eta_1(t)=0.  
\end{equation}
Now squaring and using again that $\alpha^2=\widehat{U}^2(t)+\widehat{V}^2(t)$, we arrive at a univariate polynomial equation
\begin{equation}\label{pol}
\tilde\omega(t):=\xi_1^2(t)\,(\widehat{U}^2(t)+\widehat{V}^2(t))-\eta_1^2(t)=0.
\end{equation}
Note that the polynomial $\tilde{\omega}(t)$ cannot be identically zero because of Theorem \ref{deg-1}. Let $\omega^{\star}(t)$ be the square-free part of $\tilde{\omega}(t)$, and let $\omega(t)=\frac{\omega^{\star}(t)}{\gcd(\omega^{\star}(t),W(t))}$. Since $\omega(t)$ is not identically zero, the set ${\mathcal B}$ of real roots of $\omega(t)$ is finite. Hence, we deduce the following algorithm {\tt OffsetSing} to compute a finite set containing the $t$-values generating the real, non-isolated singularities (self-intersections and local singularities) of ${\mathcal O}_d({\mathcal C})$. The algorithm requests ${\mathcal O}_d({\mathcal C})$ to be irreducible, which happens iff $U^2(t)+V^2(t)$ is a perfect square.

\begin{algorithm}{{\bf Algorithm} {\tt OffsetSing}}
\begin{algorithmic}[1]
\REQUIRE A proper parametrization $\phi(t)$ of a planar curve ${\mathcal C}$, and an offsetting distance $d>0$ such that ${\mathcal O}_d({\mathcal C})$ is irreducible.
\ENSURE A finite set ${\mathcal B}$ containing the $t$-values generating the real, non-isolated singularities of ${\mathcal O}_d({\mathcal C})$.
\STATE Let $P(x,y,t)$, $Q(x,y,t)$ be the primitive parts of \eqref{P}, \eqref{Q}, respectively.
\STATE Let $\mbox{Subres}_1(P,Q)(x,y,t)=\mathbf{sres}_{1}(x,y)\, t+\mathrm{sr}(x,y).$
\STATE Substitute $x=x(t,\alpha)$, $y=y(t,\alpha)$ in   $\mathbf{sres}_{1}(x,y)$.
\STATE Find the real solutions for $t$ of $\mathbf{sres}_{1}(\varphi(t,\alpha))=0$, where $\alpha^2=\widehat{U}^2(t)+\widehat{V}^2(t)$. For this purpose:
\begin{itemize}
\item [a.] Compute the polynomial $\xi_1(t)\,\alpha+\eta_1(t)$ by substituting $\alpha^2=\widehat{U}^2(t)+\widehat{V}^2(t)$ in $\mathbf{sres}_1(x(t,\alpha),y(t,\alpha))$, and keeping the numerator.
\item [b.] Compute the polynomial $\xi_1^2(t)\, (\widehat{U}^2(t)+\widehat{V}^2(t))-\eta_1^2(t)$,  and $\omega(t)$. 
\item [c.] Find the real roots of $\omega(t)$.
\end{itemize}
\STATE  Return the list ${\mathcal B}$ of the real roots of $\omega(t)$. 
\end{algorithmic}
\end{algorithm}

Notice that squaring in $\xi_1(t)\cdot\alpha=-\eta_1(t)$, which results from Equation \eqref{uno}, does not introduce fake solutions, because we are interested both in the case $\alpha=\sqrt{\widehat{U}^2(t)+\widehat{V}^2(t)}$ and $\alpha=-\sqrt{\widehat{U}^2(t)+\widehat{V}^2(t)}$; in fact, both correspond to offset points, one belonging to the exterior offset and the other one belonging to the interior offset. The only superfluous $t$ values that can appear when computing the real roots of $\omega(t)$ are a consequence of Remark \ref{conv-false}. These superfluous values appear when $H(x,y)=\mbox{Res}_t(P(x,y,t),Q(x,y,t))$ has extraneous factors that intersect ${\mathcal O}_d({\mathcal C})$, and correspond to the $t$-values generating these intersection points. Note also that if $\phi(t)$ is polynomial, then there are no extraneous factors, and therefore no superfluous $t$-values appear. 

In order to identify superfluous $t$ values, we first recall that the conditions for the appearance of extraneous factors, in the case of a non-polynomial $\phi(t)$, are described in Subsection \ref{subsec-off}. If these conditions hold, then the extraneous factors can be computed \cite{Far2}, and any  superfluous $t$ value must give rise to a point on one of these extraneous factors. So in order to check if a real root $t_i$ of $\omega(t)$ is superfluous, one must first check if $t_i$ gives rise to a point on some extraneous factor. In the affirmative case, $t_i$ could still be non-superfluous whenever it generates a singularity (local or self-intersection) of ${\mathcal O}_d({\mathcal C})$. In order to detect this last situation, one can directly test: (a) if there exists another $t_j\neq t_i$, $t_j$ a real root of $\omega(t)$, such that $\phi_d(t_i)=\phi_d(t_j)$ (in which case $t_j$ and $t_i$ give rise to a self-intersection of ${\mathcal O}_d({\mathcal C})$); (b) if $\mbox{lim}_{t\to t_i}\phi_d'(t)=0$, in which case $t=t_i$ is a local singularity of ${\mathcal O}_d({\mathcal C})$. In practice, and unless $t_i$ is rational, (a) or (b) can be tested only up to a certain tolerance. 

\begin{example}
Let ${\mathcal C}$ be the \emph{cardioid}, parametrized by 
$$
\mathcal{X}(t) = {\frac {-1024\,{t}^{3}}{256\,{t}^{4}+32\,{t}^{2}+1}}
,\,\,\,\,\mathcal{Y}(t)={\frac {-2048\,{t}^{4}+128\,{t}^{2}}{256\,{t}^{4}+32\,{t}^{2}+1}}.
$$
Let us now determine the singularities of the offset ${\mathcal O}_d({\mathcal C})$, for $d=1$. 
We first observe that $\mathrm{gcd}(X(t),Y(t),W(t))=1$, and that the parametrization is proper. Moreover the polynomials $U$ and $V$ are 
$$
U(t)=1024\,{t}^{2} \left( 16\,{t}^{2}-3 \right)  \left( 16\,{t}^{2}+1 \right),
$$
$$
V(t)=-256\,t \left( 48\,{t}^{2}-1 \right)  \left( 16\,{t}^{2}+1 \right). 
$$
The sum $U^2(t)+V^2(t)=65536\,{t}^{2} \left( 16\,{t}^{2}+1 \right) ^{5}$ is not a perfect square, and therefore ${\mathcal O}_d({\mathcal C})$ is irreducible for $d=1$. Additionally, $\gcd(U(t),V(t))= 256\,t \left( 16\,{t}^{2}+1 \right)$; hence, we define $ {\widehat{U}}(t)  = 4\,t \left( 16\,{t}^{2}-3 \right)$  and $\widehat{V}(t) =-48\,{t}^{2}+1$. Now, the polynomials $P(x,y,t)$ and $Q(x,y,t)$ are 
 $$P(x,y,t)=64\,x{t}^{3}- \left( 128+48\,y \right) {t}^{2}-12\,tx+y,$$
$$Q(x,y,t)= 256  \left( {x}^{2}+{y}^{2}+16\,y+63 \right) {t}^{4}+2048\,x{t}^{3}+ 32
 \left({x}^{2}+ {y}^{2}-8\,y -1\right) {t}^{2}+ ({x}^{2}+ {y}^{2}-1).
$$

The principal subresultant $\mathbf{sres_1} (x,y)$ is equal to
\begin{eqnarray*}
 \mathbf{sres_1} (x,y)& = & 1764\,x-5\,{x}^{7}+218\,{x}^{5}+903\,{x}^{3}-76\,{x}^{5}y-15\,{x}^{5}{
y}^{2}  \\
  &   &   -152\,{x}^{3}{y}^{3}-15\,{x}^{3}{y}^{4}-76\,{y}^{5}x-5\,{y}^{6}x
+2596\,y{x}^{3}+2020\,x{y}^{3}\\
& & +68\,{x}^{3}{y}^{2}-150\,{y}^{4}x+10024
\,yx+8823\,x{y}^{2}
\end{eqnarray*}
After running the algorithm {\tt OffsetSing}, the degree in $t$ of the polynomial $\tilde{\omega}(t)$ is equal to 29. By computing its square free part and dividing out the common factors with $W(t)=256\,{t}^{4}+32\,{t}^{2}+1$, we get
$$
\omega(t) = \left( {t}^{4}+{\frac {113}{9800}}{t}^
{2}+{\frac {1}{12544}} \right)   
 \left( {t}^{2}-{\frac {9}{3952}} \right)  \left( {t}^{6}-{\frac {3}{
3952}}{t}^{4}+{\frac {5}{63232}}{t}^{2}-{\frac {1}{1011712}}
 \right).
$$
Finally, by approximating the real roots of $\omega(t)$, we get ${\mathcal B}=\{t_1,t_2,t_3,t_4\}$ where
$$
t_1= -0.04772 , t_2=0.04772  ,t_3=-0.08699 ,  t_4=0.08699.
$$
The offset has a self-intersection at the points generated by $t_3$ and $t_4$, and two local singularities at the points generated by $t_1$ and $t_2$ (see Figure \ref{figura_cardioide}). Notice that ${\mathcal C}$ has a cusp, i.e. a local singularity, at the point $(0,0)$, generated by $t=0$. However, $t=0$ does not belong to ${\mathcal B}$. This is certainly not contradictory with Theorem \ref{bigth}, because one can check that the point $(0,0)$ does not generate any singularity of ${\mathcal O}_d({\mathcal C})$.

\begin{figure}
\begin{center}
\includegraphics[scale=0.25]{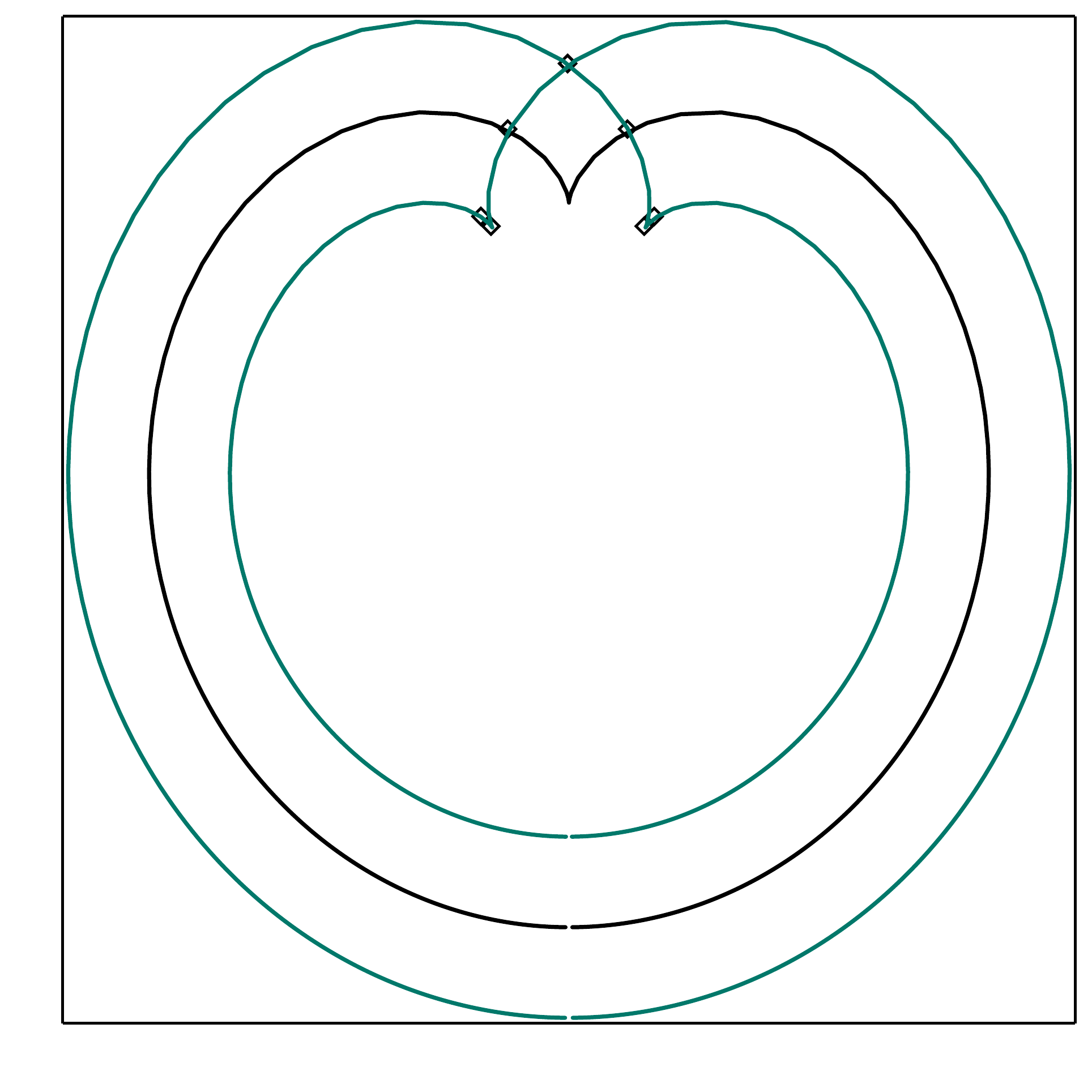}
\end{center}
\caption{Cardioid curve}\label{figura_cardioide}
\end{figure}

\end{example}

\subsection{Growing of coefficients and degrees, and complexity of the algorithm.}

In this subsection we will analyze the growing of coefficients and degrees in Algorithm {\tt OffsetSing}, as well as the bit complexity of Algorithm {\tt OffsetSing}. Notice that the bit complexity takes into account the growing of coefficients in the algorithm. We will use the standard \emph{Big Oh} notation $\OOO$, usually employed in complexity analysis, as well as the \emph{Soft Oh} notation $\tOOO$, where logarithmic factors are ignored. Furthermore, in order to simplify the analysis we will assume that the parametrization has integer coefficients.

We need to fix some notation: we denote by $k$ the maximum of the degrees of $X(t),Y(t),W(t)$, and we denote by $\tau$ the maximum of the bitsizes of the coefficients of $X(t),Y(t),W(t)$. Recall that if the bitsize of the coefficients of a polynomial is $\tau$, then the coefficients of the polynomial are bounded by $2^{\tau}$. Furthermore, in our analysis we will assume that $\tau>>\mbox{log}_2(k)$, so that $\mbox{log}_2(k)$ can be neglected when compared to $\tau$. This assumption corresponds to the case of highest computational cost. Notice that:

\begin{itemize}
\item [(a)] If we multiply two polynomials of degree $k$ with coefficients bounded by $2^{\tau_1}$ and $2^{\tau_2}$, the coefficients of the resulting polynomial are bounded by $k2^{\tau_1+\tau_2}$. Hence, the bitsize of the product is bounded by $\tau_1+\tau_2+\mbox{log}_2(k)$. Assuming that $\tau_1,\tau_2>>\mbox{log}_2(k)$ the bitsize of the product of two polynomials, using the notation fixed at the beginning of the subsection, is $\OOO(\tau_1+\tau_2)$. If $\tau_1=\tau_2=\tau$ then we get $\OOO(\tau)$. 
\item [(b)] If we add $r$ polynomials of degree $k$ with coefficients bounded by $2^\tau$, the coefficients of the sum are bounded by $r2^{\tau}$, so the bitsize of the coefficients of the sum is bounded by $\tau+\mbox{log}_2 r$. 
\item [(c)] By repeatedly applying (a), multiplying $s$ polynomials of degree $k$ with coefficients of bitsize $\tau$ yields a polynomial where the bitsize of the coefficients is $\OOO(s\tau)$. 
\end{itemize}

We also recall the following results on the complexity of some basic algorithms. We acknowledge here the help of Michael Sagraloff for pointing out some references and hints about the bit complexity of basic operations. 

\begin{itemize}
\item The bit complexity of multiplying two univariate polynomials of degree $n$ with coefficients of bitsize $\mu$ is $\tOOO(n\mu)$ (see Corollary 8.27 of \cite{vonzurger}). 
\item By repeatedly squaring and taking into account the complexity of multiplying univariate polynomials, the computation of the $p$-th power of a polynomial $f(t)$ of degree $n$ with coefficients of bitsize $\mu$ can be done in $\tOOO(n\mu p^2)$ time. 
\item In order to multiply two polynomials $F_1(t,s)=sf_1(t)+g_1(t)$ and $F_2(t,s)=sf_2(t)+g_2(t)$, we can evaluate $F_1,F_2$ at three random values of $s$, multiply the corresponding univariate polynomials, and then recover the value of $F_1(t,s)\cdot F_2(t,s)$ by interpolation. Hence if $f_i(t),g_i(t)$ have degrees bounded by $n$ and coefficients of bitsize $\mu$, then the cost is dominated by the cost of univariate polynomial multiplication, $\tOOO(n\mu)$. In order to compute $(sf_i(t)+g_i(t))^p$ we can apply a similar strategy. Since we need $p+1$ evaluations of $s$, the bit complexity of computing $F_1(t,s)\cdot F_2(t,s)$ is the result of multiplying $p+1$ times the bit complexity of computing the power of a univariate polynomial; hence we get $\tOOO(n\mu p^3)$. 
\item In order to compute the product of two bivariate polynomials $F(t,s)$ and $G(t,s)$ of degree $n$ and bitsize $\mu$, we need to evaluate $s$ at $2n+1$ points $s_1,\ldots,s_{2n+1}$, compute the products $F(t,s_i)\cdot G(t,s_i)$, and then recover $F(t,s)\cdot G(t,s)$ by interpolation. Now $F(t,s_i)$ and $G(t,s_i)$ are univariate polynomials of degree $n$ and bitsize $\tOOO(n+\mu)$. Hence computing $F(t,s_i)\cdot G(t,s_i)$ has a cost of $\tOOO(n^2+n\mu)$. We need to carry out this process $2n+1$ times, so the total cost is $\tOOO(n^3+n^2\mu)$. The interpolation part is dominated by the total cost of the multiplications. Therefore the bit complexity of computing $F(t,s)\cdot G(t,s)$ is $\tOOO(n^3+n^2\mu)$; the coefficients of $F(t,s)\cdot G(t,s)$ have bitsize $\OOO(\mu)$. 
\end{itemize}

Now let us analyze each step of Algorithm {\tt OffsetSing}.  
\vspace{3 mm}

\noindent {\it Step 1.} The degrees of $U(t),V(t)$ are bounded by $2k-1$, and the bitsizes of $U(t),V(t)$ are, according to the above observations, $\OOO(\tau)$. Hence the degrees of $\tilde{P}(x,y,t)$ and $\tilde{Q}(x,y,t)$ are bounded by $3k-1$, and their bitsizes are also $\OOO(\tau)$. In order to compute $P(x,y,t)$ and $Q(x,y,t)$ we need to remove the $t$-contents of $\tilde{P}(x,y,t)$ and $\tilde{Q}(x,y,t)$. By Lemma 11 in \cite{Sagr}, a divisor of a polynomial in $  {\Bbb Z}[x_1,\ldots,x_\ell]$ of degree $N$ with coefficients of bitsize bounded by $\mu$ has coefficients of bitsize $\tOOO(N+\mu)$. Since $P(x,y,t)$ and $Q(x,y,t)$ are divisors of $\tilde{P}(x,y,t)$ and $\tilde{Q}(x,y,t)$, in our case $N=3k-1$ and $\mu=\OOO(\tau)$; hence, $P(x,y,t)$ and $Q(x,y,t)$ have coefficients of bitsize $\tOOO(k+\tau)$. Furthermore, the degrees of $P(x,y,t)$ and $Q(x,y,t)$ are bounded by $3k-1$. So we finish this step with polynomials $P,Q$ of degrees $\OOO(k)$, and bitsizes $\tOOO(k+\tau)$.

The bit complexity of step 1 is dominated by the computation of the $t$-contents of $\tilde{P},\tilde{Q}$. From \eqref{cont1}, \eqref{cont2}, \eqref{cont3}, the $t$-contents of $\tilde{P},\tilde{Q}$ are products of gcds of univariate polynomials of degree $\OOO(k)$ and bitsize $\tOOO(\tau)$. Hence the bit complexity of step 1 is $\tOOO(k^2\tau)$ (see Section 11.2 in \cite{vonzurger}). 

\vspace{3 mm}

\noindent {\it Step 2.} Let us analyze the computation of $\mbox{Subres}_1(P,Q)(x,y,t)$. According to \cite{Diochnos}, if $f,g\in {\Bbb Z}[y_1,\ldots,y_\ell][t]$, $\mbox{deg}_t(f)=p$, $\mbox{deg}_t(g)=q$, $\mbox{deg}_{y_i}(f)\leq e_i$, $\mbox{deg}_{y_i}(g)\leq e_i$, $e=e_1\cdots e_\ell$, and the bitsizes of $f,g$ are bounded by $\mu$, then:
\begin{itemize}
\item [(i)] The degree in $y_i$ of each subresultant of $f,g$ with respect to $t$ is bounded by $(p+q)e_i$.
\item [(ii)] The complexity of the computation of any element of the subresultant sequence is $\tOOO(q(p+q)^{\ell+1} e \mu)$.
\item [(iii)] The total degree in $y_1,\ldots,y_\ell$ of each subresultant is bounded by $(p+q)\sum_{i=1}^\ell e_i$.
\item [(iv)] The bitsize of the coefficients of the subresultants is bounded by $(p+q)\mu$.
\end{itemize}
In our case, $f:=P$, $g:=Q$, $\ell=2$, $y_1:=x$, $y_2:=y$, $p=q=\OOO(k)$. Furthermore, since $\mbox{deg}_x(P)=\mbox{deg}_y(P)=1$, $\mbox{deg}_x(Q)=\mbox{deg}_y(Q)=2$, then $e_1=2$, $e_2=2$, and therefore $e=4$. Also, $\mu=\tOOO(k+\tau)$. Hence, 
\begin{itemize}
\item [(i)] The degrees in $x$ or $y$ of each subresultant of $P,Q$ with respect to $t$ is $\OOO(k)$. 
\item [(ii)] The complexity of the computation of any element of the subresultant sequence is $\tOOO(k^5+k^4\tau)$.
\item [(iii)] The total degree in $x,y$ of each subresultant is $\OOO(k)$.
\item [(iv)] The bitsize of the coefficients of the subresultants is $\tOOO(k^2+k\tau)$.
\end{itemize}

Therefore the bit complexity of step 2 is $\tOOO(k^5+k^4\tau)$.

\vspace{3 mm}

\noindent {\it Step 3.} Let $\mbox{Subres}_1(P,Q)(t)=\mathbf{sres}_1(x,y)\,t+\mbox{sr}(x,y)$. According to the previous analysis, $\mathbf{sres}_1(x,y)$ is a bivariate polynomial of degree $N$ where $N=\OOO(k)$. Therefore the number of terms of $\mathbf{sres}_1(x,y)$ is bounded by ${N+2 \choose 2}=\OOO(k^2).$ Now we need to substitute $x:=x(t,\alpha)$ and $y:=y(t,\alpha)$ in $\mathbf{sres}_1(x,y)$. Notice that 
\[
\begin{array}{lcr}
x(t,\alpha)= & \displaystyle{\frac{X(t)}{W(t)}+d\frac{\widehat{V}(t)}{\alpha}=} & \displaystyle{\frac{\alpha X(t)+d\widehat{V}(t)W(t)}{\alpha W(t)}},\\
y(t,\alpha)= & \displaystyle{\frac{Y(t)}{W(t)}-d\frac{\widehat{U}(t)}{\alpha}=} & \displaystyle{\frac{\alpha Y(t)-d\widehat{U}(t)W(t)}{\alpha W(t)}}.
\end{array}
\]
Writing \[\mathbf{sres}_1(x,y)=\sum_{\begin{array}{c}0\leq i,j\leq N\\ 0\leq i+j\leq N\end{array}}a_{ij}x^i y^j,\]we have that $\mathbf{sres}_1(x(t,\alpha),y(t,\alpha))$ is equal to  
\begin{equation}\label{sum}
\sum_{\begin{array}{c}0\leq i,j\leq N\\ 0\leq i+j\leq N\end{array}}a_{ij} \displaystyle{\frac{(\alpha X(t)+d\widehat{V}(t)W(t))^i\cdot (\alpha Y(t)-d\widehat{U}(t)W(t))^{j}\cdot [\alpha W(t)]^{N-(i+j)}}{\alpha^N W^N(t)}}.
\end{equation}
The total degrees, as polynomials in $\alpha,t$ of $\alpha X(t)+d\widehat{V}(t)W(t)$, $\alpha Y(t)-d\widehat{U}(t)W(t)$ are bounded by $3k-1$, and the total degree of $\alpha W(t)$ is bounded by $k+1$. Hence the total degree of each term of the numerator of \eqref{sum} is bounded by 
\begin{equation}\label{nume}
(3k-1)(i+j)+(k+1)(N-i-j)=(2k-2)(i+j)+(k+1)N,
\end{equation}
where $0\leq i+j\leq N$. Thus, the total degree of $\eqref{sum}$ is bounded by the result of replacing $i+j=N$ in \eqref{nume}. Since $N=\OOO(k)$ we get that the total degree of the numerator of \eqref{sum} is $\OOO(k^2)$. Notice also that the degree in $\alpha$ of the numerator of $\eqref{sum}$ is bounded by $N=\OOO(k)$. 

Since $\widehat{U}(t), \widehat{V}(t)$ are factors of $U(t),V(t)$ and the bitsize of the coefficients of $U(t),V(t)$ is $\OOO(\tau)$ and their degrees are $\OOO(k)$, then the bitsize of $\widehat{U}(t), \widehat{V}(t)$ is $\tOOO(k+\tau)$. Then the bitsize of the coefficients of $(\alpha X(t)+d\widehat{V}(t)W(t))^i$ is $\tOOO(i(k+\tau))$, the bitsize of the coefficients of $(\alpha Y(t)-d\widehat{U}(t)W(t))^{j}$ is $\tOOO(j(k+\tau))$, and the bitsize of the coefficients of $[\alpha W(t)]^{N-(i+j)}$ is bounded by $\tOOO((N-i-j)\tau)$. For each $i,j$ we have $i+j+[N-(i+j)]=N$, and therefore the bitsize of the coefficients of $(\alpha X(t)+d\widehat{V}(t)W(t))^i\cdot (\alpha Y(t)-d\widehat{U}(t)W(t))^{j}\cdot [\alpha W(t)]^{N-(i+j)}$ is $\tOOO(N(k+\tau))=\tOOO(k^2+k\tau)$. Since the bitsize of $a_{ij}$ is $\tOOO(k^2+k\tau)$, the multiplication by $a_{ij}$ yields coefficients of bitsize $\tOOO((k^2+k\tau)^2)$. We may need to add $\OOO(k^2)$ coefficients of this bitsize, but neglecting logarithmic terms we again get bitsize $\tOOO((k^2+k\tau)^2)$.  

Let us compute now the complexity of step 3. We observe that each $(\alpha X(t)+d\widehat{V}(t)W(t))^i$ and $(\alpha Y(t)-d\widehat{U}(t)W(t))^{j}$ in \eqref{nume} has degree $\OOO(k^2)$ and bitsize $\tOOO(k^2+k\tau)$. Therefore 
from the results at the beginning of the subsection, the complexity of computing each multiplication \[(\alpha X(t)+d\widehat{V}(t)W(t))^i\cdot (\alpha Y(t)-d\widehat{U}(t)W(t))^{j}\cdot [\alpha W(t)]^{N-(i+j)}\] in \eqref{nume} is dominated by $\tOOO\left((k^2)^3+(k^2)^2(k^2+k\tau)\right)=\tOOO(k^6+k^5\tau)$. Since $\mathbf{sres}_1(x,y)$ has $\OOO(k^2)$ terms, we need to perform $\OOO(k^2)$ multiplications of this kind. Hence we get a complexity $\tOOO(k^2\cdot (k^6+k^5\tau))=\tOOO(k^8+k^7\tau)$ for step 3.

\vspace{3 mm}

\noindent {\it Step 4.} Let $a(t,\alpha)$ be the numerator of $\mathbf{sres}_1(x(t,\alpha),y(t,\alpha),t)$, i.e.
\[a(t,\alpha)=a_N(t)\alpha^N+a_{N-1}(t)\alpha^{N-1}+\cdots + a_0(t).\]
Recall that the total degree of $a(t,\alpha)$ is $\OOO(k^2)$, and the degree in $\alpha$ of $a(t,\alpha)$ is $\OOO(k)$. Now we need to substitute $\alpha^2:=b(t)=\widehat{U}^2(t)+\widehat{V}^2(t)$ into $a(t,\alpha)$. The degree of $b(t)$ is $\OOO(k)$, and the bitsize of the coefficients is $\OOO(k+\tau)$. Hence the computation of each power $(\widehat{U}^2(t)+\widehat{V}^2(t))^j$ has complexity $\tOOO((k^2+k\tau) j^2)$. Since $j\leq k$, this complexity is dominated by $\tOOO(k^4+k^3\tau)$. Furthermore, the bitsize of the coefficients of each $(\widehat{U}^2(t)+\widehat{V}^2(t))^j$ is dominated by $\tOOO(k^2+k\tau)$, and the degree in $t$ of $(\widehat{U}^2(t)+\widehat{V}^2(t))^j$ is dominated by $\OOO(k^2)$. The degree in $t$ of each $a_j(t)$ is $\OOO(k^2)$, and the bitsize of its coefficients is $\tOOO((k^2+k\tau)^2)$. Hence the complexity of each multiplication $a_j(t)(\widehat{U}^2(t)+\widehat{V}^2(t))^{j^{\star}}$ is dominated by $\tOOO(k^2\cdot (k^2+k\tau)^2)=\tOOO(k^4(k+\tau)^2)$, and gives rise to coefficients of bitsize $\tOOO((k^2+k\tau)^2)$. We need to compute $\OOO(k)$ products of this kind, so we get a total complexity of $\tOOO(k\cdot k^4(k+\tau)^2)=\tOOO(k^5(k+\tau)^2)$. The degree in $t$ that we get is $\OOO(k^2)$.

Finally we arrive at equation \eqref{uno} and then, after squaring, we compute the polynomial $\tilde{\omega}(t)$ in \eqref{pol}, and then $\omega(t)$. These operations do not modify the total complexity of step 4. Furthermore, the degree of $\omega(t)$ is $\OOO(k^2)$, and the bitsize of the coefficients of $\omega(t)$ is $\tOOO((k^2+k\tau)^2)$. According to Theorem 5 in \cite{Sagr}, the cost of isolating the roots of $\omega(t)$ is $\tOOO(k^6(k+\tau)^2)$. Therefore, the overall bit complexity of step 4 is $\tOOO(k^6(k+\tau)^2)$.
 
The computational cost of checking whether or not ${\mathcal M}$ is irreducible is very low, and does not modify the overall complexity of the algorithm. Adding the complexities for step 1, step 2, step 3 and step 4, we achieve an overall bit complexity of $\tOOO(k^6(k+\tau)^2)$ for Algorithm {\tt OffsetSing}. We summarize the previous reasonings in the following result.

\begin{theorem} \label{complex}
 The overall bit complexity of Algorithm {\tt OffsetSing} is bounded by $\tOOO(k^6(k+\tau)^2)$. The growing of the coefficients in the algorithm is dominated by $\tOOO((k^2+k\tau)^2)$, and the growing of the degrees is dominated by $\OOO(k^2)$. Furthermore, the polynomial $\omega(t)$ computed by Algorithm {\tt OffsetSing} has degree $\OOO(k^2)$ and coefficients of bitsize dominated by $\tOOO((k^2+k\tau)^2)$. 
\end{theorem}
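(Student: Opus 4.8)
The plan is to prove the statement by assembling, step by step, the size and complexity bounds for the four stages of Algorithm \texttt{OffsetSing}, and then adding the partial estimates, keeping track of the dominant term. First I would treat Step~1: starting from $\deg(X),\deg(Y),\deg(W)\le k$ with coefficients of bitsize $\tau$, the elementary observations (a)--(c) give that $U(t),V(t)$ have degree $O(k)$ and bitsize $\OOO(\tau)$, hence so do $\tilde P(x,y,t),\tilde Q(x,y,t)$, with degree in $t$ bounded by $3k-1$. Removing the $t$-contents described in \eqref{cont1}--\eqref{cont3} only divides these polynomials, so by Lemma~11 in \cite{Sagr} the primitive parts $P,Q$ still have degree $O(k)$ and bitsize $\tOOO(k+\tau)$; the cost is dominated by the gcd computations of univariate polynomials of degree $O(k)$ and bitsize $\tOOO(\tau)$, which is $\tOOO(k^2\tau)$ by \cite{vonzurger}. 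For Step~2 I would invoke the subresultant bounds of \cite{Diochnos} with $\ell=2$, $y_1=x$, $y_2=y$, $p=q=O(k)$, $e_1=e_2=2$ (so $e=4$), and $\mu=\tOOO(k+\tau)$: this yields $\mathbf{sres}_1(x,y)$ of total degree $O(k)$ and bitsize $\tOOO(k^2+k\tau)$, computed in time $\tOOO(k^5+k^4\tau)$.

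Step~3 is the most delicate part of the bookkeeping. After substituting the rational functions $x(t,\alpha)$, $y(t,\alpha)$ into the $O(k^2)$ monomials of $\mathbf{sres}_1(x,y)$, each resulting numerator monomial is a product of powers with exponents $i$, $j$, $N-(i+j)$ summing to $N=O(k)$, so the total degree of the numerator is bounded by \eqref{nume} evaluated at $i+j=N$, i.e. $O(k^2)$, while its degree in $\alpha$ is $O(k)$. Tracking the bitsizes through the powers of $\alpha X(t)+d\widehat V(t)W(t)$, $\alpha Y(t)-d\widehat U(t)W(t)$ and $\alpha W(t)$ — which have bitsizes $\tOOO(i(k+\tau))$, $\tOOO(j(k+\tau))$, $\tOOO((N-i-j)\tau)$ — and then multiplying by $a_{ij}$ of bitsize $\tOOO(k^2+k\tau)$, one obtains coefficients of bitsize $\tOOO((k^2+k\tau)^2)$. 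Using the $\tOOO(n^3+n^2\mu)$ cost for multiplying bivariate polynomials of degree $n$ and bitsize $\mu$, each of the $O(k^2)$ monomial products costs $\tOOO(k^6+k^5\tau)$, giving $\tOOO(k^8+k^7\tau)$ for Step~3.

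Finally, for Step~4 I would substitute $\alpha^2=b(t)=\widehat U^2(t)+\widehat V^2(t)$ (degree $O(k)$, bitsize $O(k+\tau)$) into $a(t,\alpha)=\sum_j a_j(t)\alpha^j$; the powers $b(t)^j$ with $j\le k$ cost $\tOOO(k^4+k^3\tau)$ each and the $O(k)$ products $a_j(t)\,b(t)^{j^\star}$ cost $\tOOO(k^5(k+\tau)^2)$ in total, reducing $a(t,\alpha)$ to the form \eqref{uno}; squaring to obtain $\tilde\omega(t)$ in \eqref{pol} and then passing to $\omega(t)$ does not change this order. The polynomial $\omega(t)$ then has degree $O(k^2)$ and coefficients of bitsize $\tOOO((k^2+k\tau)^2)$, so isolating its real roots costs $\tOOO(k^6(k+\tau)^2)$ by Theorem~5 in \cite{Sagr}. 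Adding the four contributions, the root-isolation term $\tOOO(k^6(k+\tau)^2)$ dominates, and the degree and coefficient bounds for $\omega(t)$ are exactly those found in Step~4, which proves all three assertions of the theorem.

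The hard part will be the size and cost accounting in Step~3: one must see that composing with two rational functions whose numerators have degree $O(k)$ produces numerators of degree only $O(k^2)$, not $O(k^3)$, precisely because the exponents $i,j,N-(i+j)$ add up to $N=O(k)$; and that, despite the $O(k^2)$ monomials, the coefficient bitsize stays at $\tOOO((k^2+k\tau)^2)$ after adding (logarithmic factors being absorbed). Choosing the right multiplication primitive — evaluation and interpolation in the auxiliary variable $\alpha$ — is what keeps the per-product cost at $\tOOO(k^6+k^5\tau)$ and hence keeps Step~3 below the final bound.
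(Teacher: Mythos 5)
Your proposal reproduces the paper's proof of Theorem~\ref{complex} step by step: the same four-stage decomposition of Algorithm {\tt OffsetSing}, the same degree and bitsize bounds for $P,Q$ after removing contents (via Lemma~11 of \cite{Sagr}), the same invocation of the subresultant estimates of \cite{Diochnos} with $\ell=2$, $e=4$, $p=q=\OOO(k)$, the same evaluation-interpolation accounting in Step~3 giving cost $\tOOO(k^8+k^7\tau)$, and the same reduction $\alpha^2\mapsto b(t)$ followed by root isolation via Theorem~5 of \cite{Sagr} to get the dominant term $\tOOO(k^6(k+\tau)^2)$. The approach and all intermediate bounds coincide with the paper's.
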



\subsection{Experimental Results}

We have implemented our algorithm in the computer algebra system Maple 16. Since the algorithm requires to compute subresultants, we tested several algorithms for carrying out this operation (see \cite{AMC}); in practice, the best method seems to be  the direct computation of the subresultant chain using the command  {\tt RegularChains[ChainTools][SubresultantChain]} of Maple 16. Some of the rational curves we have tested, denoted by $\mathbf{C}_i$, $i\in \{1,\ldots, 8\}$, are enlisted below: 


\begin{eqnarray*}
\mathbf{C}_1& : & x={\frac {t+{t}^{3}}{1+{t}^{4}}} , y={\frac {t-{t}^{3}}{1+{t}^{4}}} \\
\\
\mathbf{C}_2 & : &x= {\frac {-7\,{t}^{4}+288\,{t}^{2}+256}{{t}^{4}+32\,{t}^{2}+256}} , y={\frac {-80\,{t}^{3}+256\,t}{{t}^{4}+32\,{t}^{2}+256}} \\
\\
\mathbf{C}_3& : & x={\frac {18\,{t}^{4}+21\,{t}^{3}-7\,t-2}{18\,{t}^{4}+48\,{t}^{3}+64\,{t
}^{2}+40\,t+9}}  , y={\frac {36\,{t}^{4}+84\,{t}^{3}+73\,{t}^{2}+28\,t+4}{18\,{t}^{4}+48\,{
t}^{3}+64\,{t}^{2}+40\,t+9}} \\ 
\\
\mathbf{C}_4 &: & x= 1- \left( 1+{t}^{2} \right) ^{-1},y=t-{\frac {t}{1+{t}^{2}}}\\
\\
\mathbf{C}_5& : & x= {\frac {-{t}^{4}-6\,{t}^{2}+3}{ \left( 1+{t}^{2} \right) ^{2}}}, y=8\,{\frac {{t}^{3}}{ \left( 1+{t}^{2} \right) ^{2}}}\\
\\ 
\mathbf{C}_6& : &   x=\frac{1-3t^2}{(1+t^2)^2}, y=\frac{(1-3t^2)t}{(1+t^2)^2}\\
\\ 
\mathbf{C}_7& : &  x=\frac{t^2-3}{1+t^2}, y=\frac{t(-t^2+3)}{1+t^2}\\
\\ 
\mathbf{C}_8& : & x=\frac{87-7t^4+22t^3-55t^2-94t}{-73-56t^4-62t^2+97t}, y=\frac{-82-4t^4-83t^3-10t^2+62t}{-73-56t^4-62t^2+97t}\\
\\
\end{eqnarray*}

The results of our experiments on the curves $\mathbf{C}_i$ are summarized in Table 1: here, the variable {\tt Time} shows the computing time (in seconds); $d$ is the offsetting distance; {\tt $n_p$} is the number of real $t$-values computed by the algorithm; $\delta_t$ is the degree of $\omega(t)$,  $\tau$ is the bitsize of $\omega(t)$, and $\delta_t(P), \delta_t(Q)$ are the degrees in $t$ of $P(x,y,t)$ and $Q(x,y,t)$. Additionally, we include the degree $\deg(F(x,y))$ of the implicit equation of the offset, in order to give an idea of the size of the problem in each case. All the computations have been carried out with the Computer Algebra System
Maple 16 in an iMac with an Intel Core i3 processor
with speed revving up to 3.06GHz. 
\begin{center}
\begin{tabular}{|c|l|l|c|c|c|c|c|c|} \hline
Example & Time & $d$ & $n_p$  &$\delta_t$ &  $\tau$ &$\delta_t(P), \delta_t(Q) $   &$\delta(F(x,y)) $\\
\hline 
\hline $\mathbf{C}_1$ & 0.140  & 1 & 10   &30 &  22  &6, 4  &  12 \\
\hline $\mathbf{C}_2$ & 0.122  & 1 &   9 &   21 &41  &4, 4   & 10 \\
\hline $\mathbf{C}_3$ & 6.167  & 1 &  26  & 222 & 510 & 10, 8    & 20 \\
\hline $\mathbf{C}_4$ & 0.051  & 1 &   4 & 22&   16 & 4, 4 & 8\\ 
\hline $\mathbf{C}_5$ & 0.068  & 1 &   8 & 20 &   23& 3, 6  & 8 \\
\hline $\mathbf{C}_5$ & 0.092  & 0.3 & 12  &  22 & 53 & 3, 6   & 8 \\
\hline $\mathbf{C}_6$ &  0.244  &  1 & 21  &81&  84 &6, 6 &14 \\
\hline $\mathbf{C}_7$ & 0.074   & 1 &  9  &29&26 & 5, 4  & 10 \\
\hline $\mathbf{C}_8$ &  6.948  & 0.5  & 12   &  228 &  927 & 10, 8   & 20 
\\
\hline
\end{tabular}
\end{center}\begin{center}
{\bf Table 1:}  Examples.
\end{center}

The pictures corresponding to the examples in Table  1 can be found in Figure \ref{figuras_ejemplos}; from left to right, the curves $\mathbf{C}_1$, $\mathbf{C}_2$, $\mathbf{C}_3$ appear in the first row, $\mathbf{C}_4$, $\mathbf{C}_5$ with $d=1$, $\mathbf{C}_5$ with $d=0.3$ are shown in the second row, and $\mathbf{C}_6$, $\mathbf{C}_7$, $\mathbf{C}_8$ are shown in the third row. In all the cases, with the exception of $\mathbf{C}_7$, we get exactly the set of $t$-values generating the singularities of the offset, without any extra values. In the case of $\mathbf{C}_7$, we get a superfluous value, namely $t=0$. This $t$-value generates the offset points $(-4,0)$ and $(-2,0)$. None of these points are singularities of the offset; however, the point $(-2,0)$ is a singularity of ${\mathcal H}=\mbox{Res}_t(P,Q)$, which in this case has extraneous components, and corresponds to the intersection of one of these spurious components with the offset to $\mathbf{C}_7$. 

Notice that in general $n_p$ is bigger than the number of singularities, since several $t$-values give rise to the same self-intersection. For instance, in the case of $\mathbf{C}_5$ with $d=1$, the algorithm provides $12$ $t$-values. However, the curve has 9 singularities; 6 of these singularities are local singularities, each one generated by a different $t$-value, and three singularities are self-intersections, each one generated by two different $t$-values. Therefore, we get $n_p=6\cdot 1+3\cdot 2=12$.


\begin{figure}
\begin{center}
\centerline{$\begin{array}{ccc}
\includegraphics[scale=0.25]{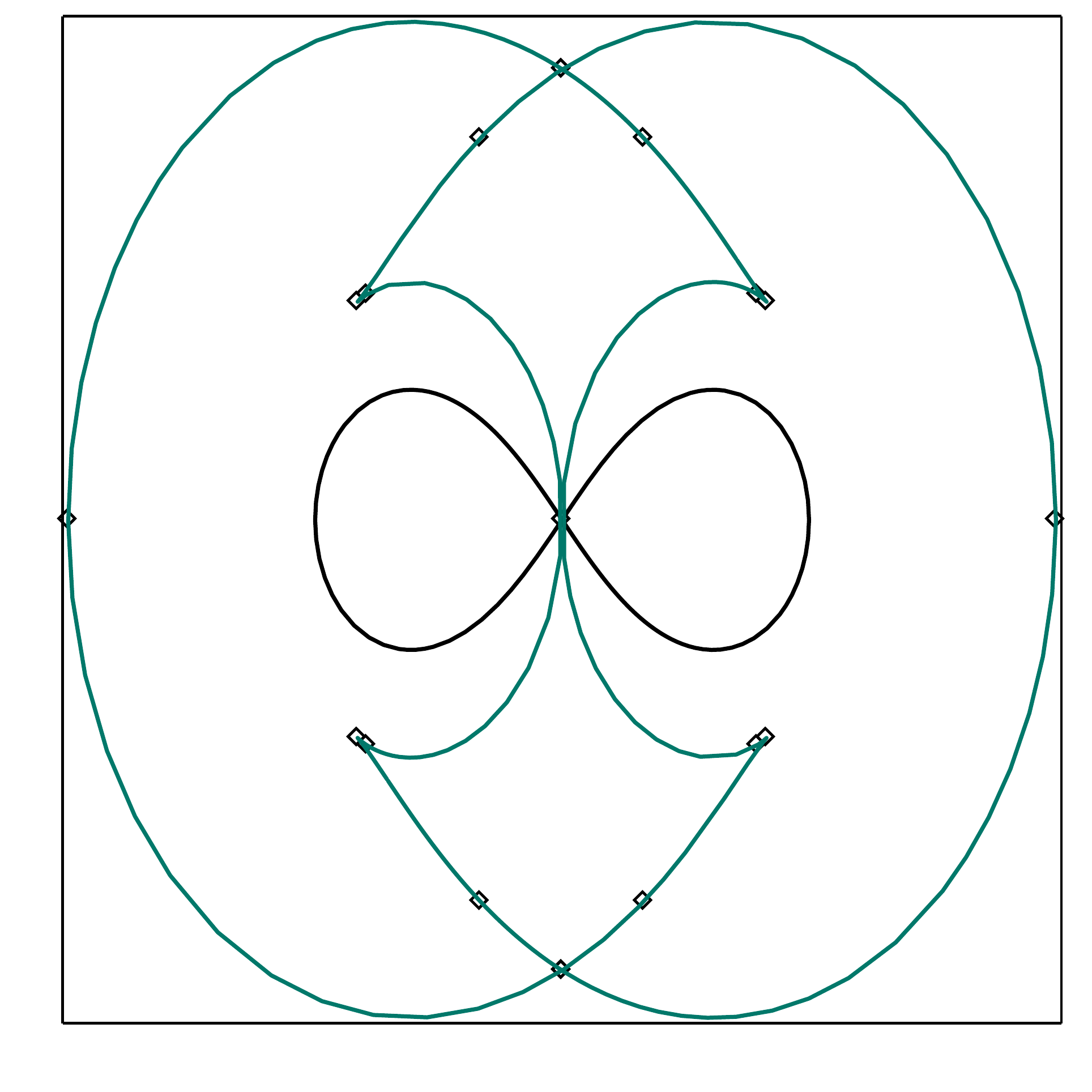}
 &\includegraphics[scale=0.25]{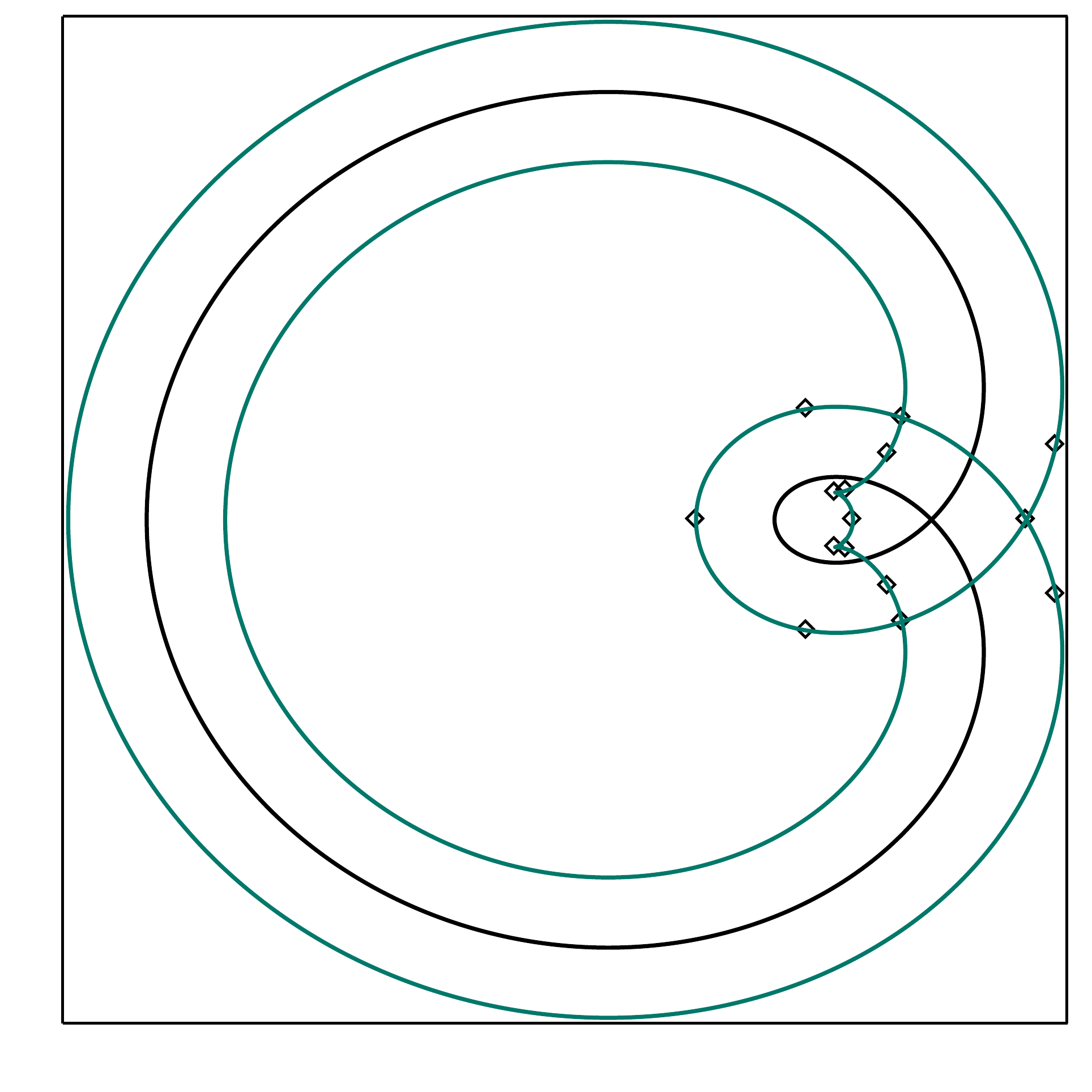}
 &\includegraphics[scale=0.25]{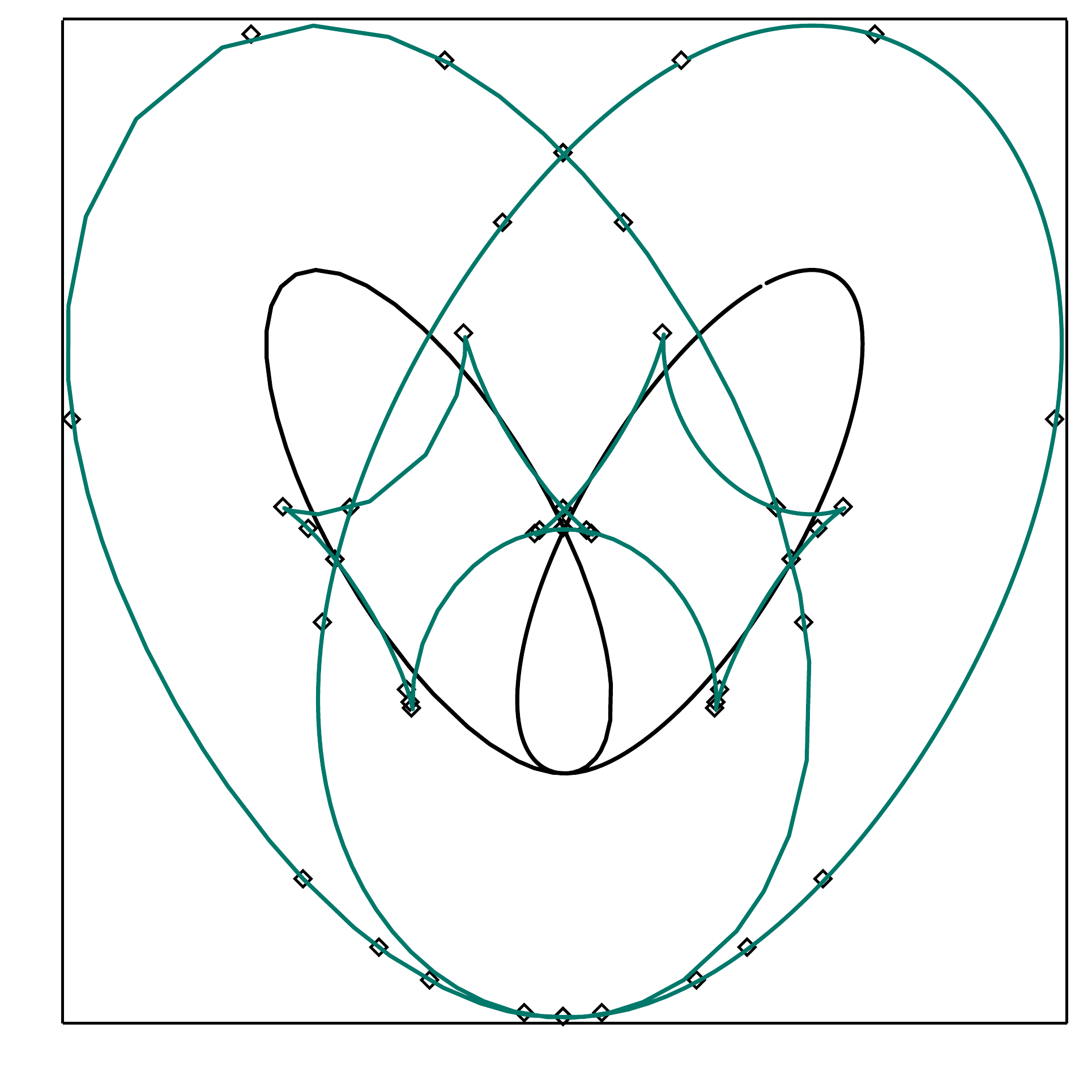} \\
\includegraphics[scale=0.25]{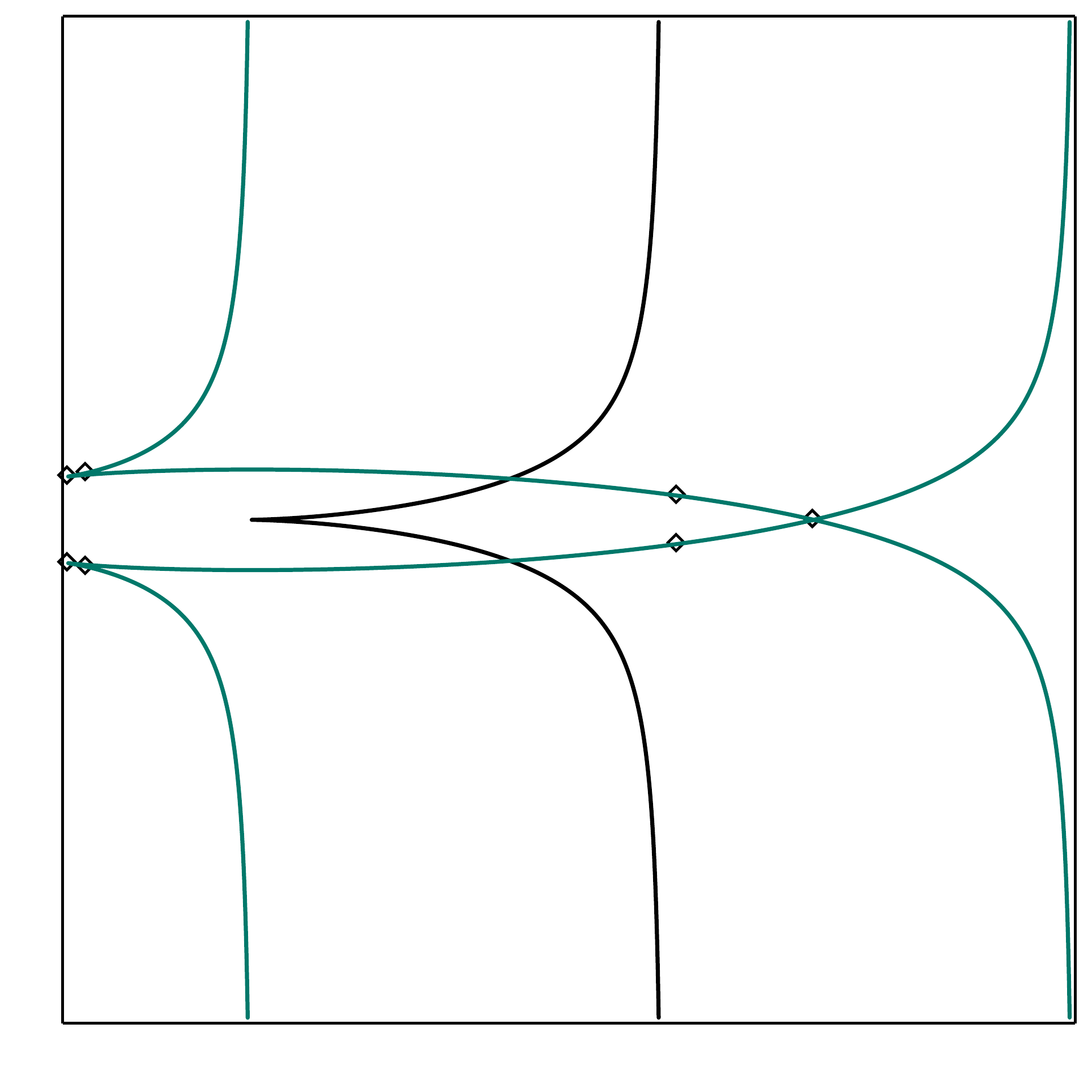} &
\includegraphics[scale=0.25]{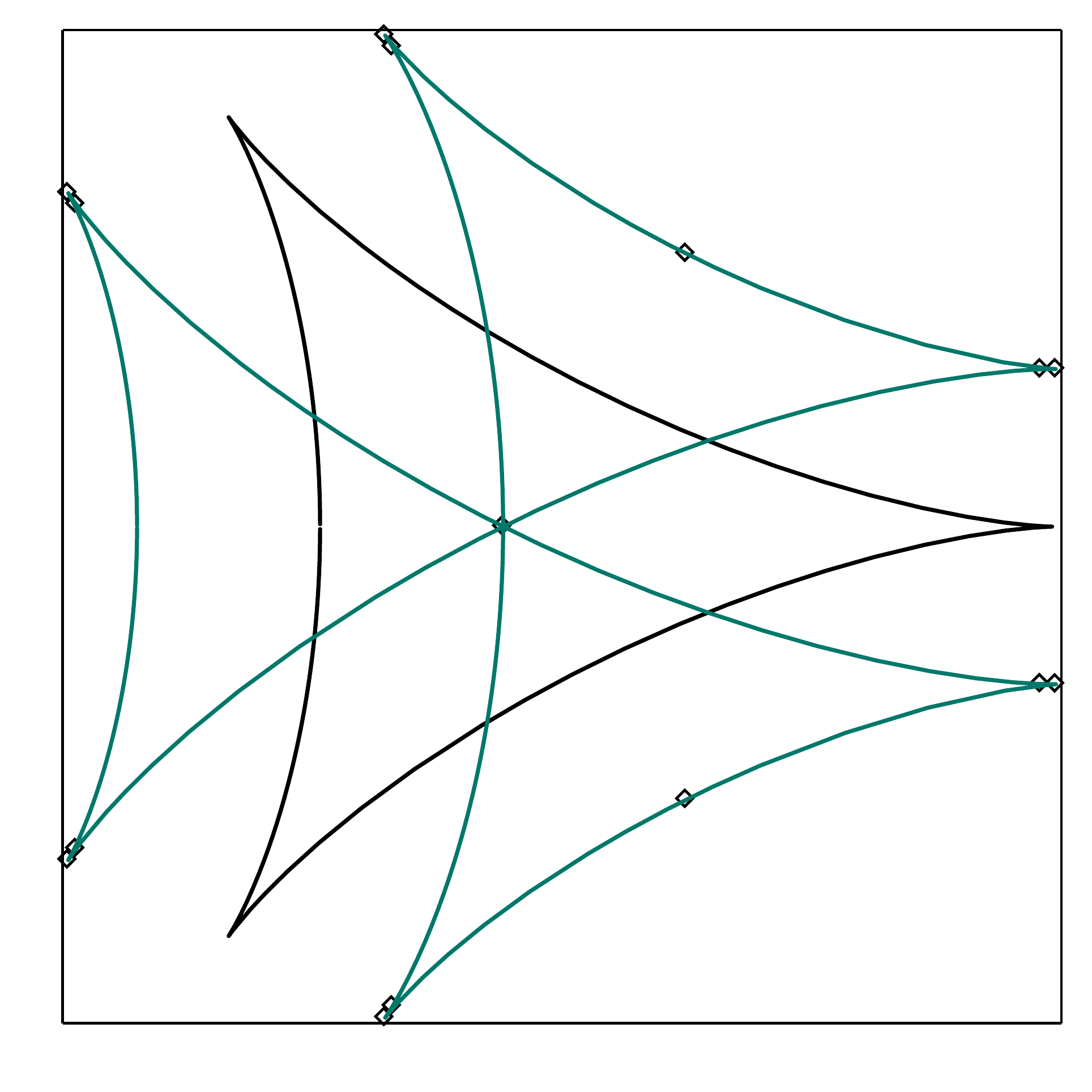} &
\includegraphics[scale=0.25]{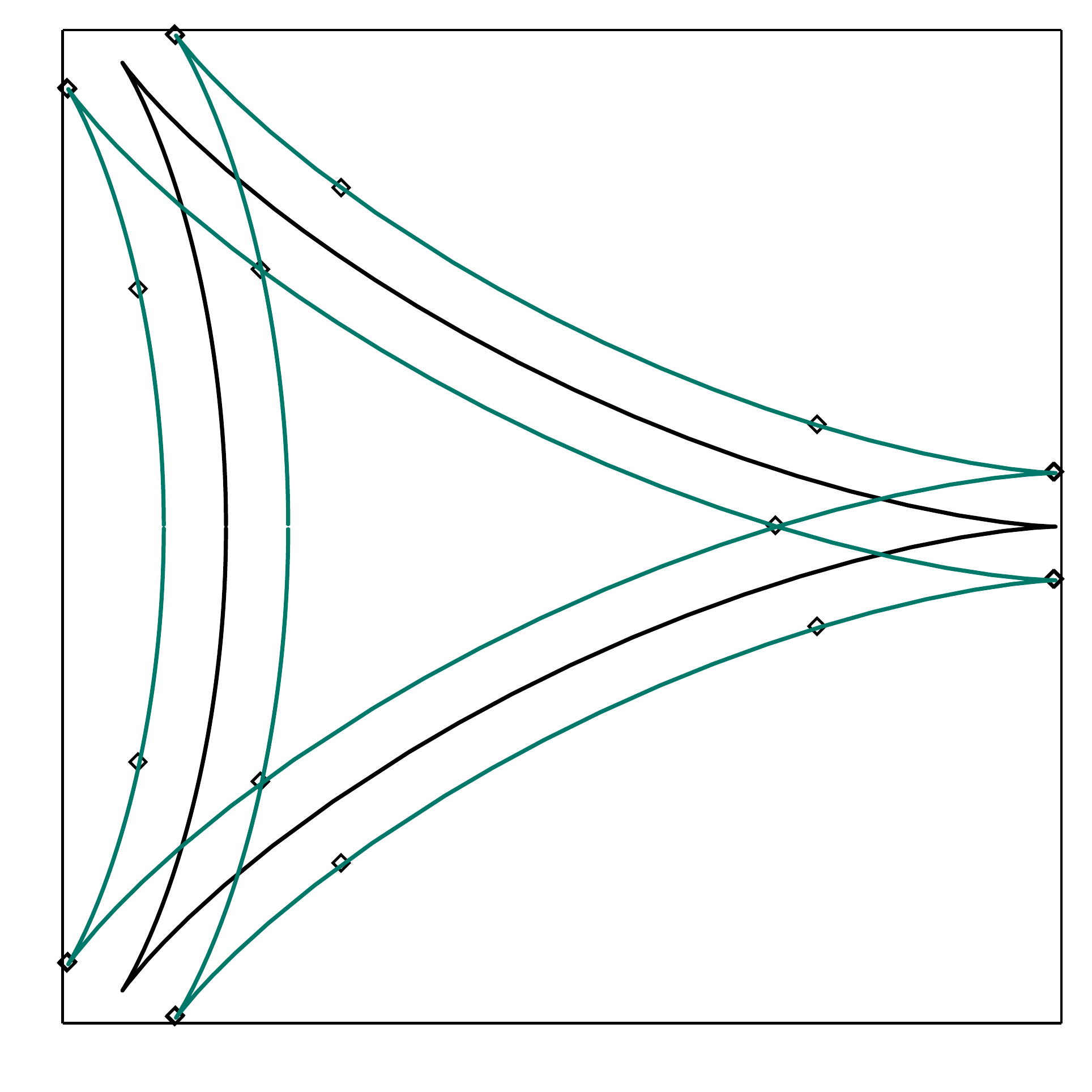}\\
\includegraphics[scale=0.25]{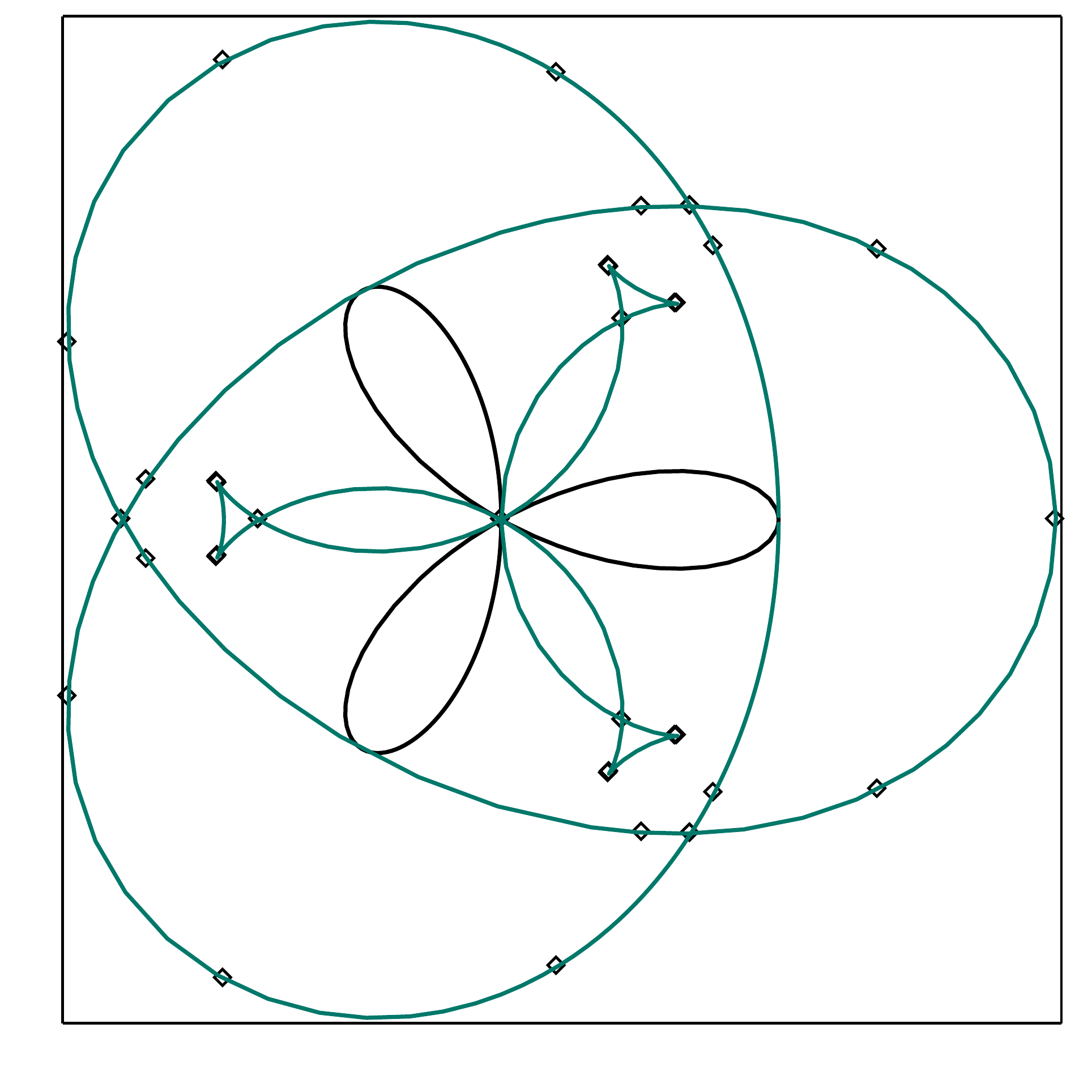}&\includegraphics[scale=0.25]{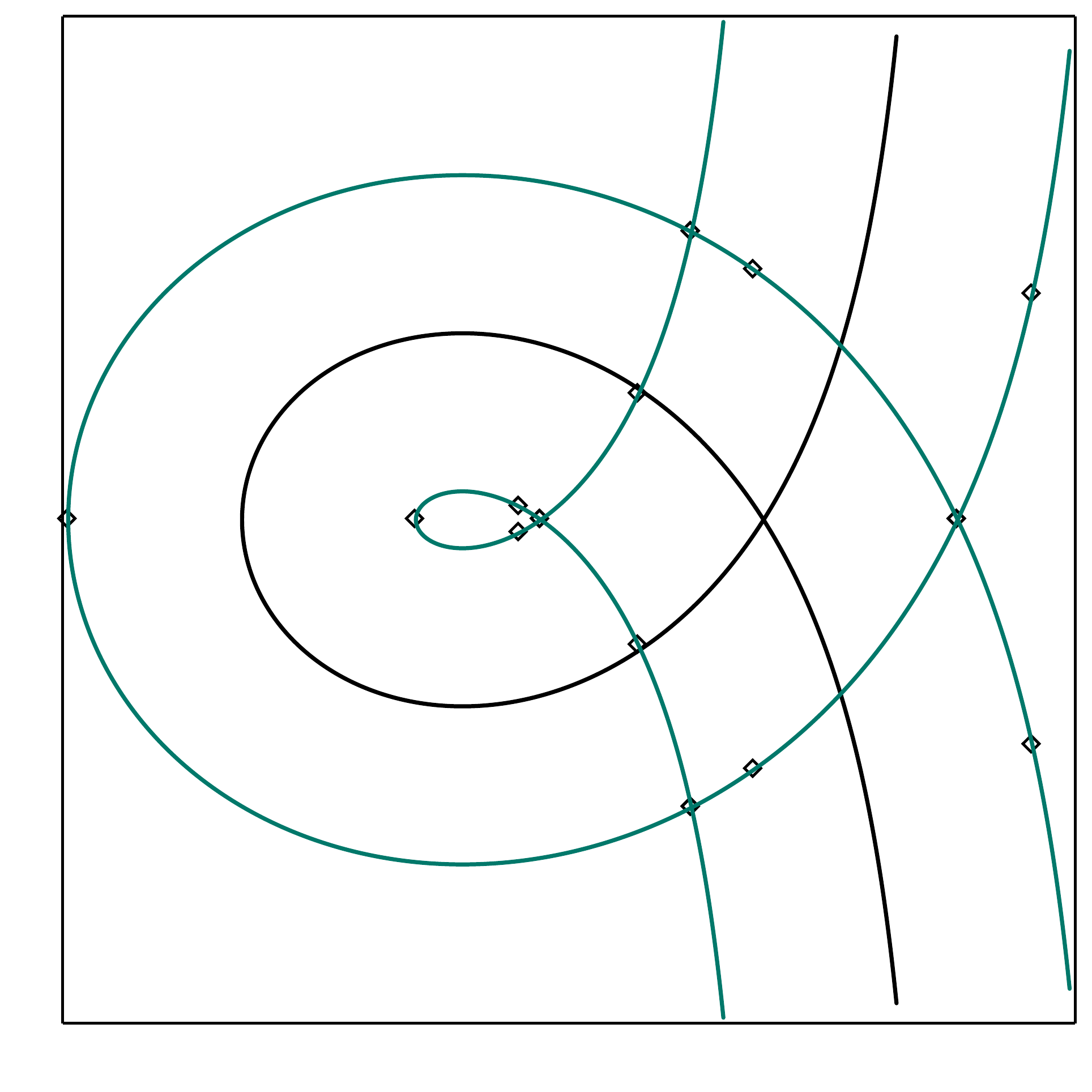} & \includegraphics[scale=0.25]{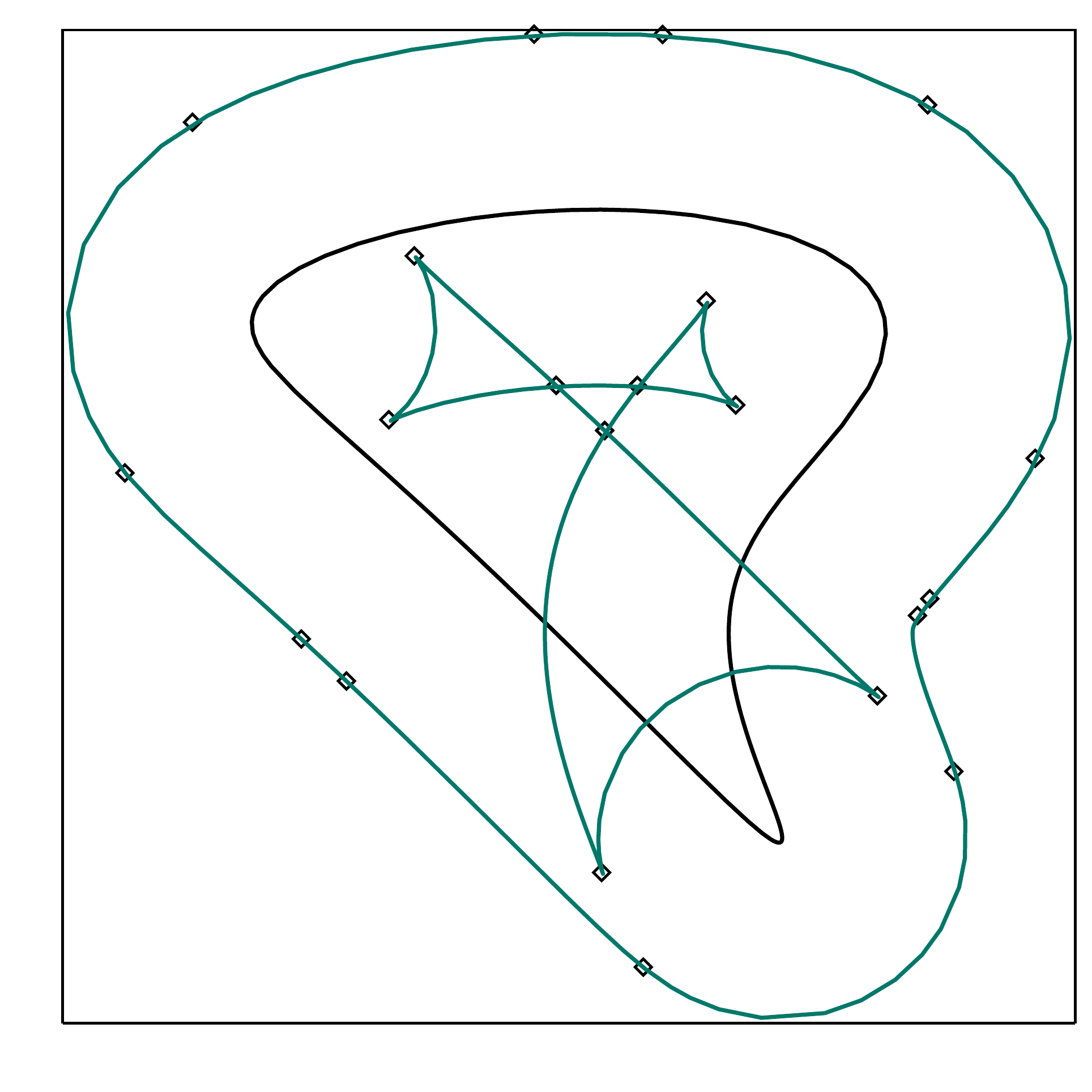} \\
\end{array}$}
\end{center}
\caption{Examples of the algorithm.}\label{figuras_ejemplos}
\end{figure}

\subsection{Comparison with other approaches}
We have also implemented in the same computer algebra system the algorithm described in \cite{Far2}, in order to compare timings. We provide the details of the experiments carried out with several polynomial curves, denoted as $\mathbf{C}_i$, $i\in \{9,\ldots,13\}$, enlisted below:  
\begin{eqnarray*}
\mathbf{C}_9& : & x= 87\,{t}^{5}+44\,{t}^{4}+29\,{t}^{3}+98\,{t}^{2}-23\,t +10,\\&& y=-61\,{t}^{5}-8\,{t}^{4}-29\,{t}^{3}+95\,{t}^{2}+11\,t-49.\\
\\%
\mathbf{C}_{10} & : &x=1+{t}^{4}+2\,{t}^{3}+9\,{t}^{2}-3\,t , \\&& y=-4+{t}^{3}+5\,{t}^{2}+t \\
\\%
\mathbf{C}_{11}& : & x=95\,{t}^{5}+11\,{t}^{4}-49\,{t}^{3}-47\,{t}^{2}+40\,t-81,\\&&  y=98\,{t}^{5}-23\,{t}^{4}+10\,{t}^{3}-61\,{t}^{2}-8\,t-29. \\ 
\\%
\mathbf{C}_{12} &: & x= t,\\&&y=t^4.\\
\\%
\mathbf{C}_{13}& : & x=\frac{-1}{10}\, \left( 1-t \right) ^{6}+\frac{9}{5} \, \left( 1-t \right) ^{5}t-15\,
 \left( 1-t \right) ^{4}{t}^{2}+15\, \left( 1-t \right) ^{2}{t}^{4}-\frac{9}{5}\, \left( 1-t \right) {t}^{5}+\frac{1}{10}\,{t}^{6}
,\\&& y=\left( 1-t \right) ^{6}+{\frac {21}{5}}\, \left( 1-t \right) ^{5}t+9
\, \left( 1-t \right) ^{4}{t}^{2}+9\, \left( 1-t \right) ^{2}{t}^{4}+{
\frac {21}{5}}\, \left( 1-t \right) {t}^{5}+{t}^{6}.
\\
\\
\\ %
\end{eqnarray*}
The following table, Table 2, includes the parameters in Table 1 plus Time$_2$, which is the computing time of the algorithm in \cite{Far2}.  

\begin{center}
\begin{tabular}{|c|l|l|c|c|c|c|c|c|} \hline
Ex. & Time & Time$_2$ & $d$ & $n_p$  & $\delta_t$ &$\tau$ &$\delta_t(P), \delta_t(Q) $   &$\delta(F(x,y)) $ \\
\hline 
\hline $\mathbf{C}_9$ &   0.830 &  60.286 & 2 & 8 &   200 &   660   & 9,10& 	18 \\ 
\hline $\mathbf{C}_{10}$ &  0.247   &4.527 & 1& 4 &   108   &   177   & 7, 8 & 14 \\
\hline $\mathbf{C}_{11}$ &  1.288  &60.443  &5/3 &4&   200 &   696      &9, 10  & 18\\
\hline $\mathbf{C}_{12}$ & 0.107   & 0.667 & 0.8 & 8  &  84 &  99 &7, 8 &  14  \\
\hline $\mathbf{C}_{13}$ & 4.672   &475.200  & 0.05   & 4 &  320&  948  & 11, 12 & 22 \\
\hline $\mathbf{C}_{13}$ &  6.779  & 555.871 &  $\simeq$ 0.03141 & 4 &  320  & 1547&11, 12 & 22 \\
\hline
\end{tabular}
\end{center}\begin{center}
{\bf Table 2:}  Comparative examples
\end{center}

The timings in Table 2 show that our algorithm is clearly much faster than the algorithm in \cite{Far2}. The pictures corresponding to the examples in Table 2 can be found in Figure \ref{figuras_ejemplos2}; from left to right, the curves $\mathbf{C}_9$, $\mathbf{C}_{10}$, $\mathbf{C}_{11}$ appear in the first row, $\mathbf{C}_{12}$, $\mathbf{C}_{13}$ with $d=0.05$ and $\mathbf{C}_{13}$ with $d=\simeq0.3141$ are shown in the second row. The curves  $\mathbf{C}_{12}$ and $\mathbf{C}_{13}$ are also considered in \cite{Maekawa}: $\mathbf{C}_{12}$ is the superbola $y=t^4$ and $\mathbf{C}_{13}$ is the bottle-shaped B\'ezier curve whose control points are given by
$$
(-0.1,1),(0.3,0.7),(-1,0.6),(0,0),(1,0.6),(-0.3,0.7),(0.1,1).
$$
Note that the curve $\mathbf{C}_{13}$ for $d\simeq 0.3141$ is an example of a tacnode. Our algorithm computes four different real $t$-values, but with an appropriate tolerance one observes that only two of these values are regarded as different. In \cite{Maekawa}, several timings are listed for $\mathbf{C}_{12}$ and $\mathbf{C}_{13}$, depending on the implementation method (double precision
floating point arithmetic, or rounded interval arithmetic), and the tolerance used in the computations. Although our timings are better than those in \cite{Maekawa}, here the comparison is less clear, since the method in \cite{Maekawa} is implemented in C++ and the timings correspond to a graphics workstation running at 36 MHz, very different from our own machine; implementing in our own system the technique used in \cite{Maekawa} would be really difficult, since a number of nontrivial algebraic and numerical strategies are involved.
\begin{figure}
\vspace{-2cm}
\begin{center}
\centerline{$\begin{array}{ccc}
\includegraphics[scale=0.25]{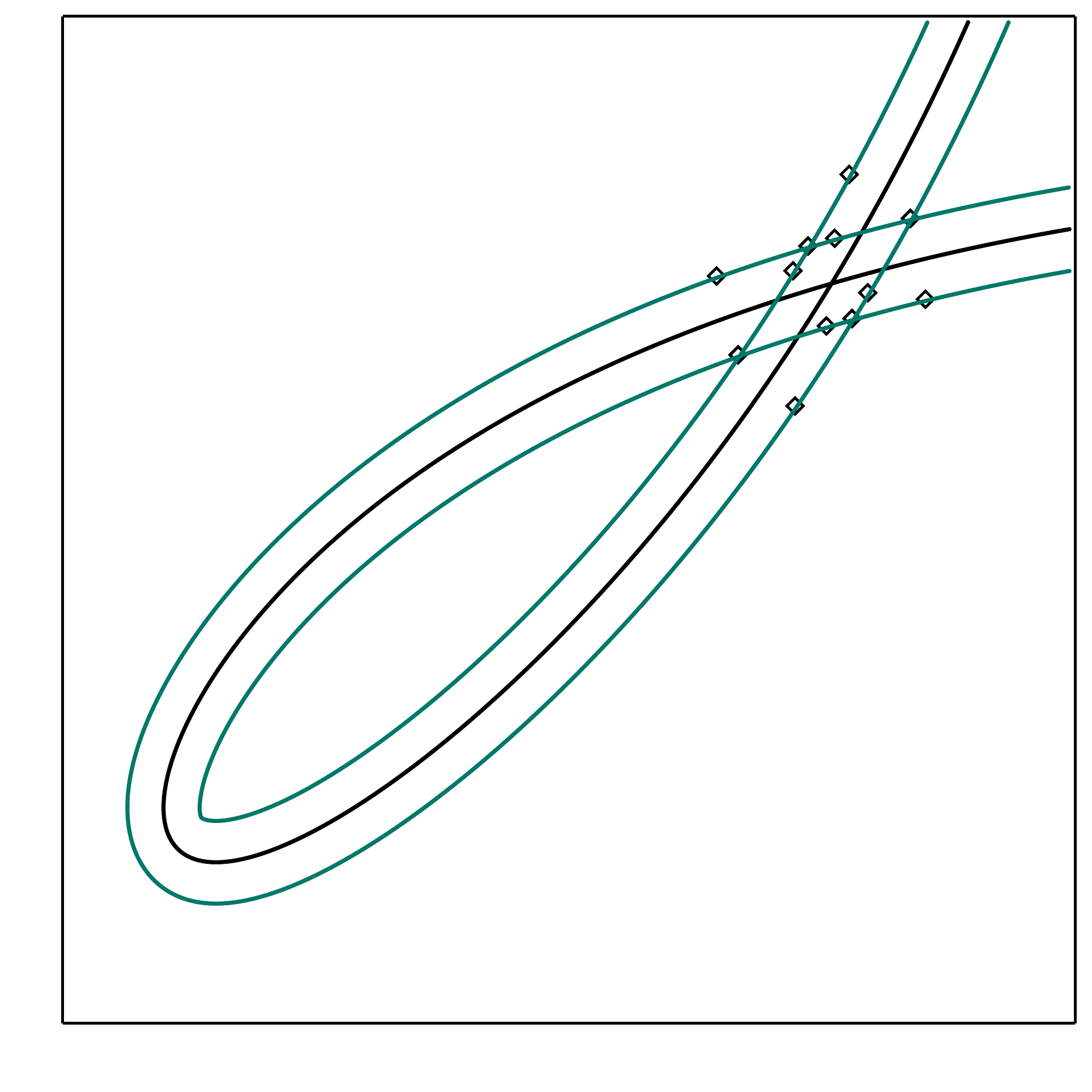}
 &\includegraphics[scale=0.25]{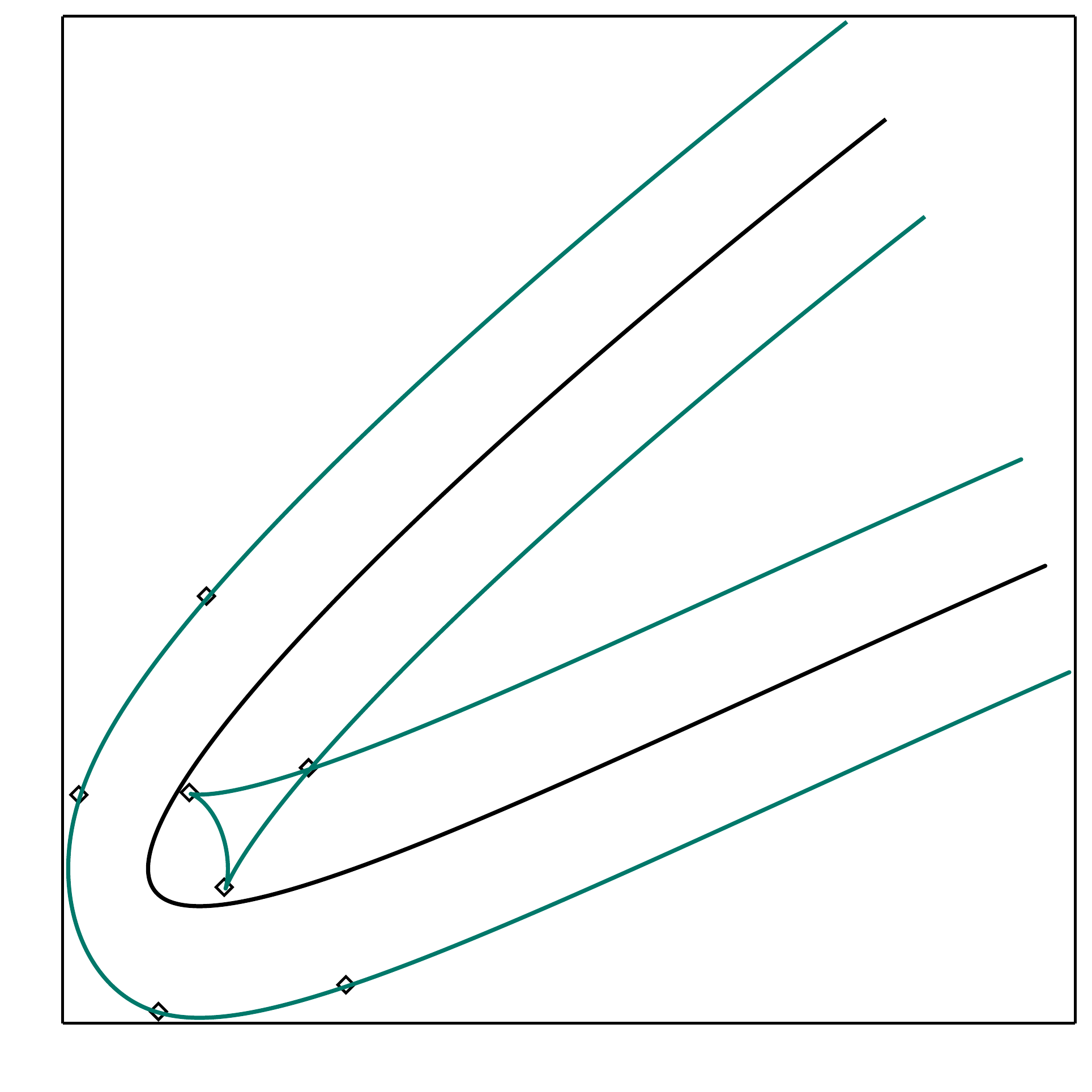}
 &\includegraphics[scale=0.25]{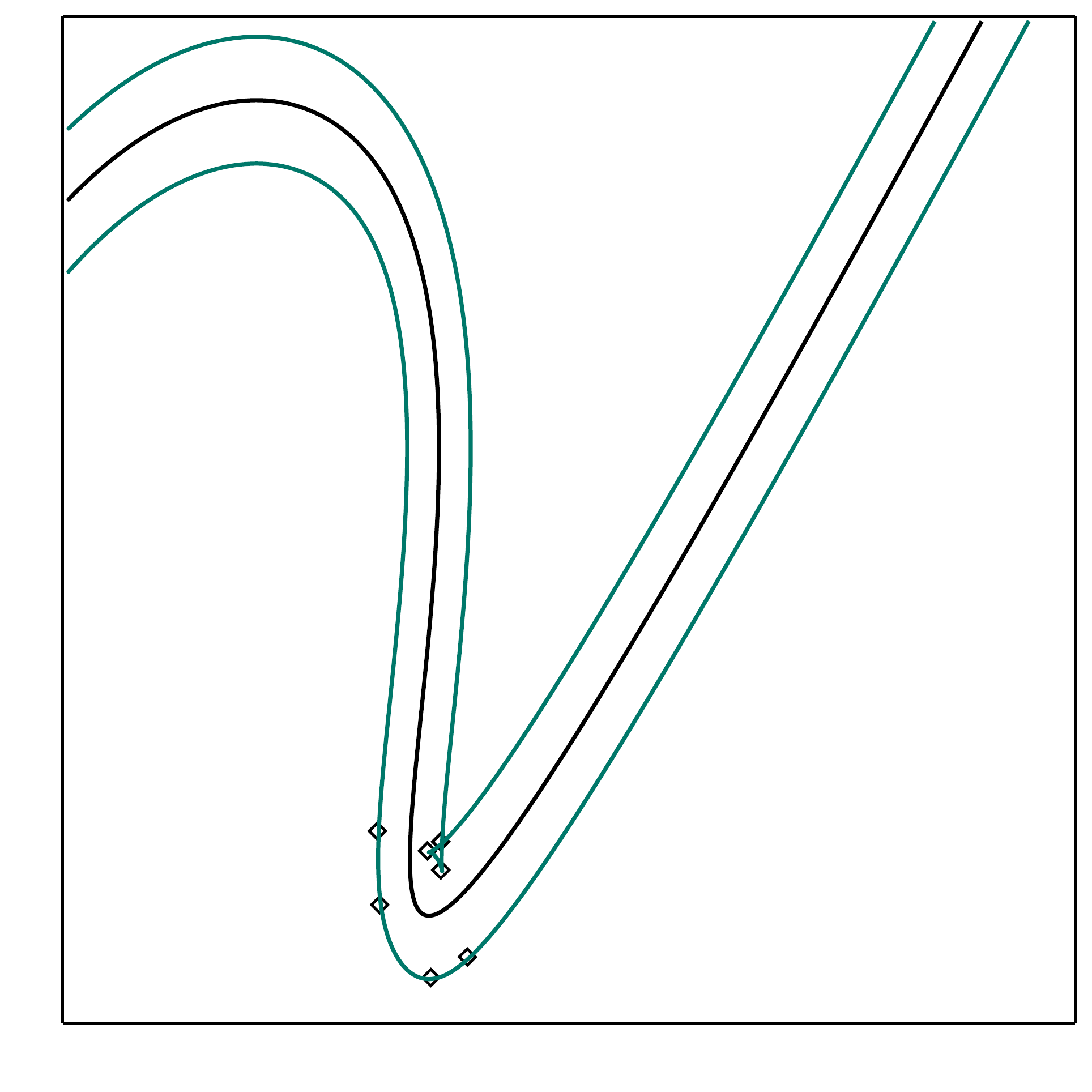} \\
\includegraphics[scale=0.25]{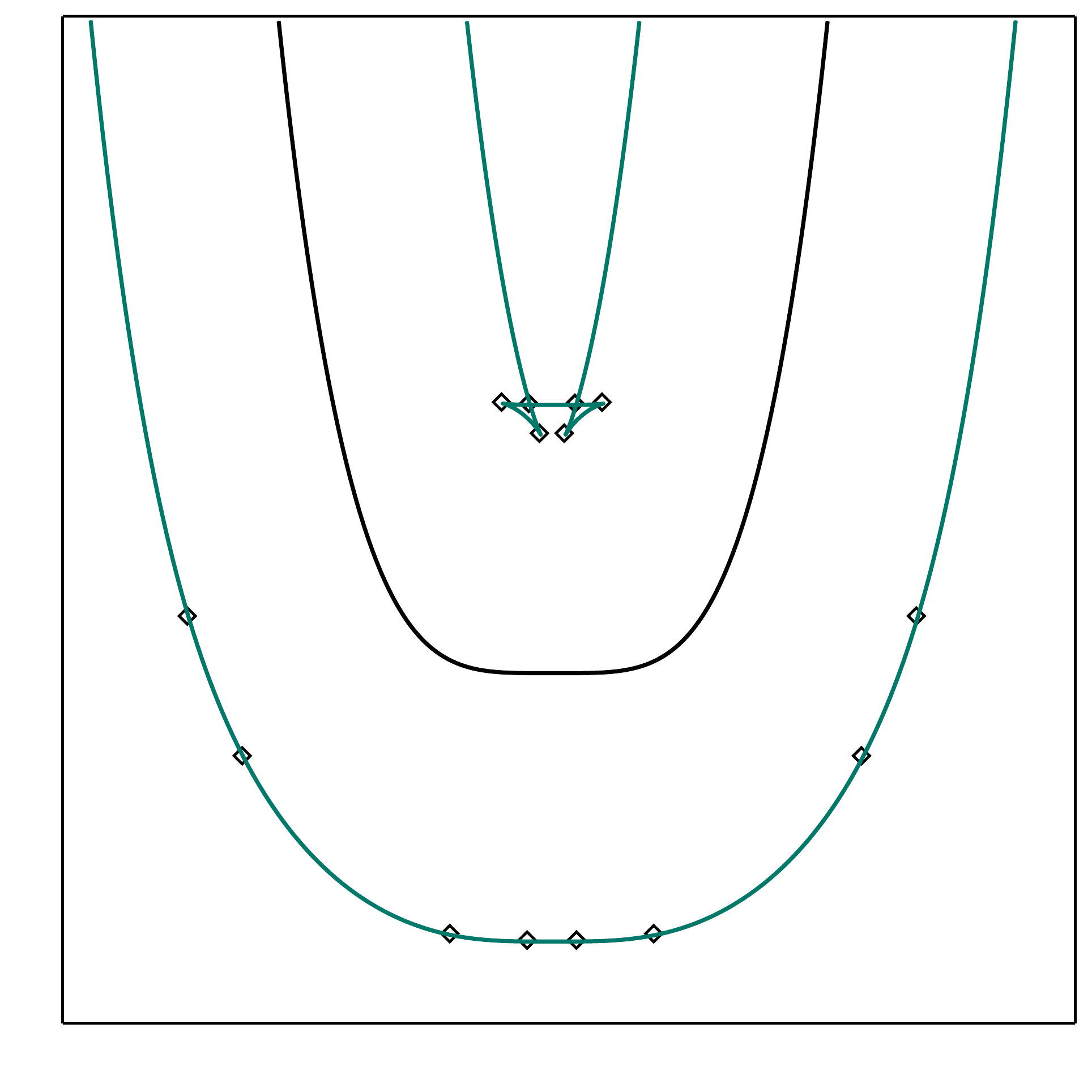} &
\includegraphics[scale=0.25]{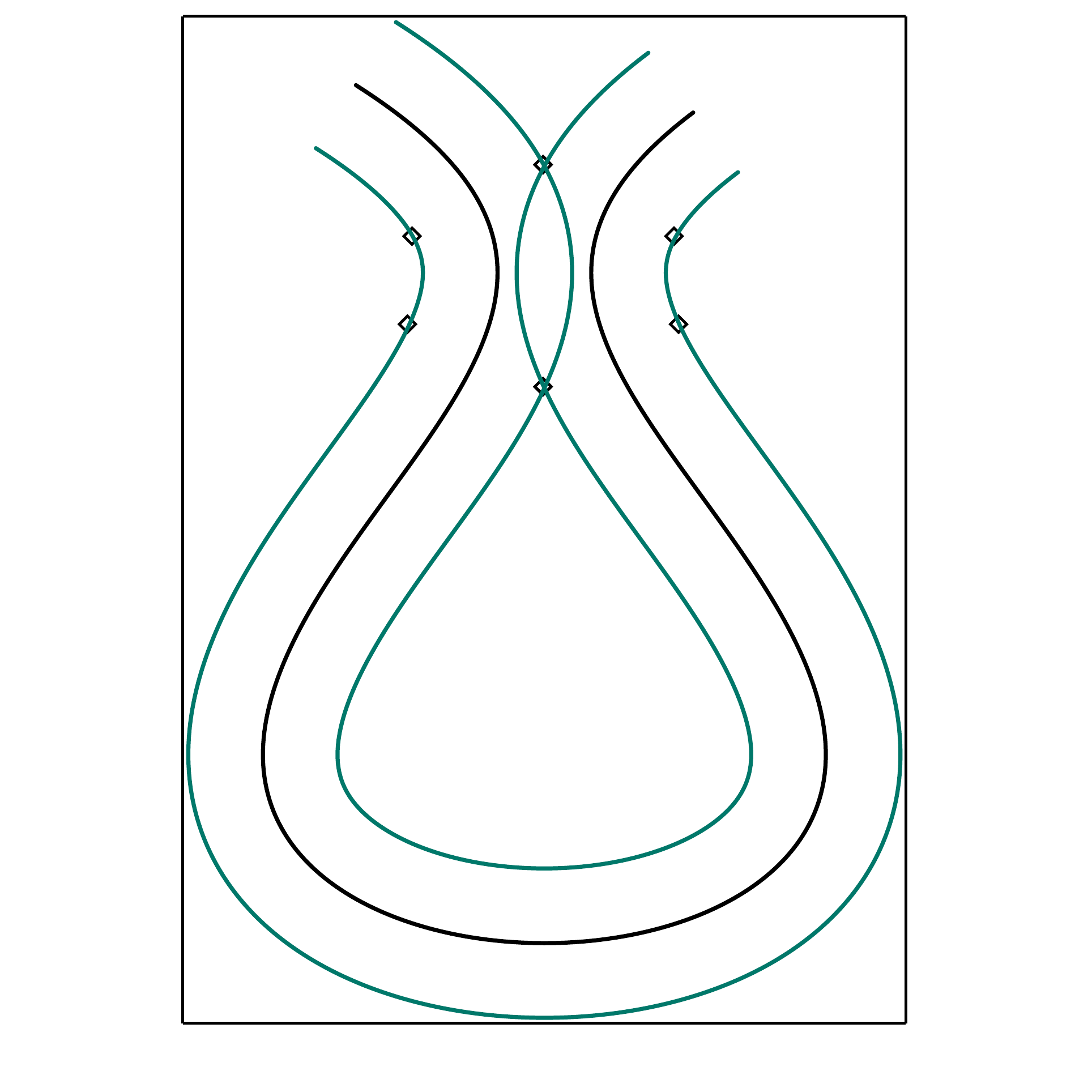} &
\includegraphics[scale=0.25]{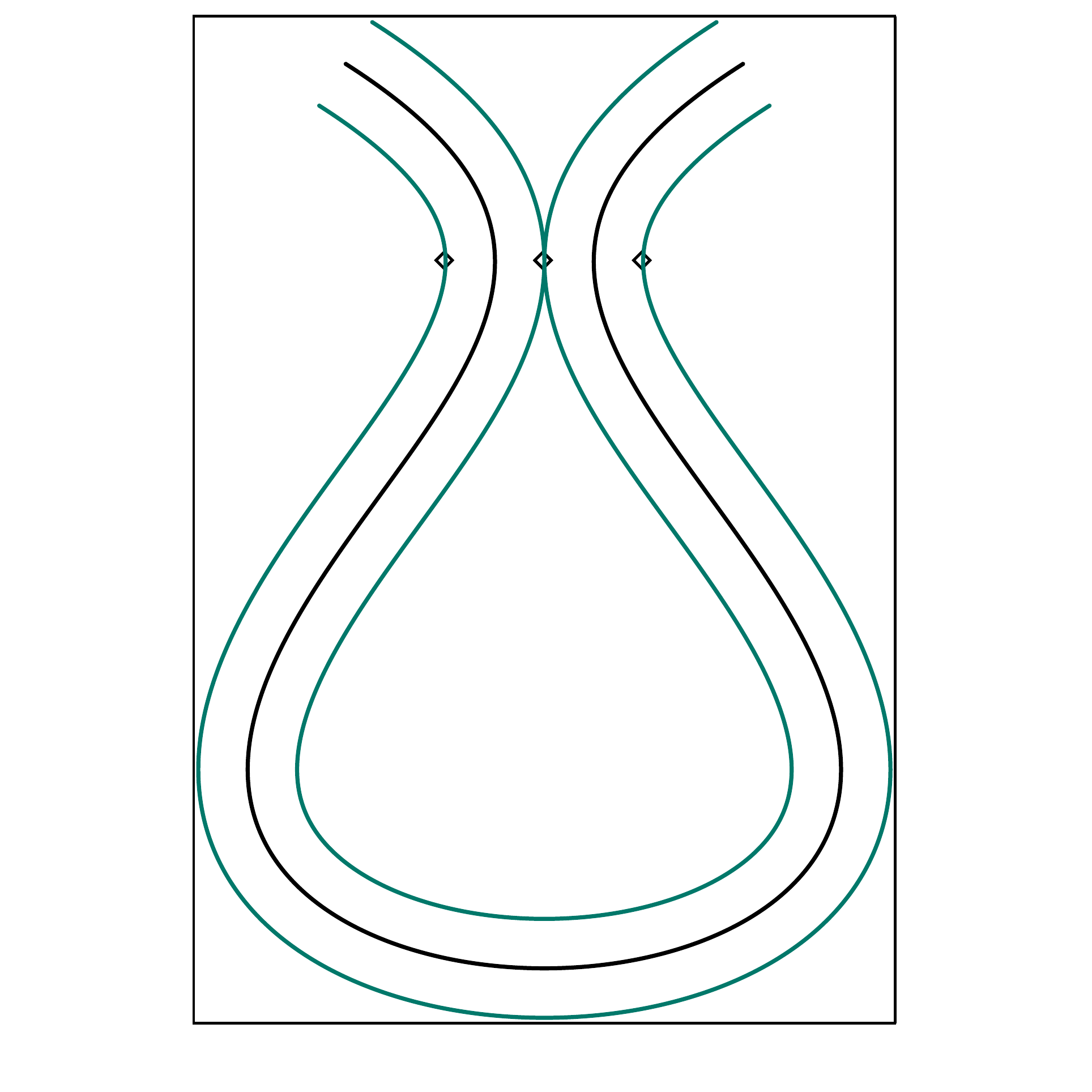}\\
\end{array}$}
\end{center}
\caption{Polynomial examples}\label{figuras_ejemplos2}
\end{figure}

\section{Conclusions.} \label{sec-conclusions}

In this paper we have presented a novel method to compute the real affine, non-isolated singularities of the offset of a planar curve described by means of a proper rational parametrization $({\mathcal X}(t),{\mathcal Y}(t))$. The method is based on ideas in \cite{Fukushima}, and requires the computation of the inverse of a mapping that relates the offset with an auxiliary, simpler curve ${\mathcal M}$. This curve ${\mathcal M}$ lives in the plane $(t,\alpha)$ and has the equation $\alpha^2=\widehat{U}^2(t)+\widehat{V}^2(t)$, where $t$ is the original parameter of the curve and $\alpha$ is an auxiliary variable. The method is easy to describe and to implement, and provides a finite list containing the affine $t$-values generating the real, non-isolated singularities of the offset, which is useful, in particular, for trimming applications. The method can be generalized to other geometric constructions involving square roots studied in the CAGD literature, like for instance \emph{generalized offsets} (\cite{A12}, \cite{Rafa1}) or \emph{conchoids} (\cite{A12-2}, \cite{SS08}, \cite{SS10}).
 
 
\section*{References}

\newpage
\section{Appendix I: proof of Lemma \ref{l-aux}.}

The goal of this appendix is proving Lemma \ref{l-aux}. In order to do it, we need to analyze the behavior of $P, Q$ at the points of ${\mathcal O}_d({\mathcal C})$ generated by $t= \infty$, if any. We introduce the notation $p_{\infty}=\lim\limits_{t\to\infty}\big(\frac{X(t)}{W(t)},\frac{Y(t)}{W(t)}\big)$.

\begin{lemma}\label{circulomalo}
The leading coefficient of $Q$ with respect to $t$ vanishes if and only if $p_{\infty}$ is an affine point. Furthermore, the only points where the leading coefficient of $Q$ with respect to $t$ vanishes are the points of the circle of radius $d$ centered at $p_{\infty}$.
\end{lemma}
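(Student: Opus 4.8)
The plan is to work directly with the expansion of $\tilde{Q}(x,y,t)$ as a polynomial in $t$ with coefficients in ${\Bbb R}[x,y]$, and then transfer the conclusion to $Q=\tilde{Q}/\mu(t)$. Write $\mathrm{lc}_t(\cdot)$ for the leading coefficient with respect to $t$. Since $\mu(t)=\gcd(W(t),X^2(t)+Y^2(t))$ is a polynomial in $t$ alone whose leading coefficient is a nonzero real number, $\mathrm{lc}_t(Q)$ and $\mathrm{lc}_t(\tilde{Q})$ differ only by a nonzero constant factor; hence they have exactly the same zero set in the $(x,y)$--plane, and it suffices to prove the statement for $\mathrm{lc}_t(\tilde{Q})$. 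Set $n=\deg(W)$ and $k=\max\{\deg(X),\deg(Y),\deg(W)\}$, and denote by $x_j,y_j,w_j$ the coefficient of $t^j$ in $X,Y,W$ respectively (so $w_n\neq 0$, while $x_j$ or $y_j$ may vanish). Recall that $p_\infty$ is an affine point if and only if $\deg(X)\le\deg(W)$ and $\deg(Y)\le\deg(W)$, i.e.\ if and only if $k=n$.

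First I would treat the case $k=n$, i.e.\ $p_\infty$ affine. Using the expansion
\[
\tilde{Q}(x,y,t)=W^2(t)(x^2+y^2)-2W(t)X(t)x-2W(t)Y(t)y+X^2(t)+Y^2(t)-d^2W^2(t),
\]
each summand has $t$--degree at most $2n$, and collecting the coefficient of $t^{2n}$ gives
\[
w_n^2(x^2+y^2)-2w_nx_n x-2w_ny_n y+x_n^2+y_n^2-d^2w_n^2=w_n^2\big[(x-x_n/w_n)^2+(y-y_n/w_n)^2-d^2\big].
\]
Since $p_\infty=(x_n/w_n,\,y_n/w_n)$, this is $w_n^2$ times the defining polynomial of the circle of radius $d$ centered at $p_\infty$; in particular it is a nonconstant (hence nonzero) element of ${\Bbb R}[x,y]$, so it genuinely equals $\mathrm{lc}_t(\tilde{Q})$, and its zero set is precisely that circle. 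This settles both assertions in this case.

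Next I would treat the case $k>n$, i.e.\ $p_\infty$ not affine. Here $2k>n+k>2n$, so among the summands of $\tilde{Q}$ the only one that can contribute a term in $t^{2k}$ is $X^2(t)+Y^2(t)$, whose coefficient of $t^{2k}$ is $x_k^2+y_k^2$. Since $k=\max\{\deg(X),\deg(Y)\}$, at least one of $x_k,y_k$ is nonzero, and because the coefficients are \emph{real} we get $x_k^2+y_k^2>0$. Thus $\mathrm{lc}_t(\tilde{Q})=x_k^2+y_k^2$ is a nonzero constant, which never vanishes on the $(x,y)$--plane. Combining the two cases yields exactly the statement of the lemma: $\mathrm{lc}_t(Q)$ has a zero if and only if $p_\infty$ is affine, and in that case its zero set is the circle of radius $d$ centered at $p_\infty$.

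The argument is essentially degree bookkeeping, so I do not expect a genuine obstacle; the two points needing a little care are (i) checking that the coefficient of the nominally top power $t^{2n}$ in the affine case is not identically zero in $x,y$, so that it really is $\mathrm{lc}_t(\tilde{Q})$ — the explicit circle formula above guarantees this — and (ii) using that the ground field is ${\Bbb R}$ to ensure $x_k^2+y_k^2\neq 0$ in the non-affine case, since over ${\Bbb C}$ the statement would fail.
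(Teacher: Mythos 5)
Your proposal is correct and follows essentially the same route as the paper's proof: both reduce to $\mathrm{lc}_t(\tilde{Q})$ (noting $\mu(t)$ only contributes a nonzero constant), split on whether $\deg(W)\geq\max\{\deg(X),\deg(Y)\}$, and identify the leading coefficient as the circle of radius $d$ centered at $p_\infty$ in the affine case and a nonzero constant (using real coefficients) otherwise. The only cosmetic difference is that you extract the coefficient of $t^{2n}$ explicitly, whereas the paper phrases it via $\lim_{t\to\infty}$ of $\tilde{Q}/W^2$; the content is identical.
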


\begin{proof}
Note first that the leading coefficient of $Q$ with respect to $t$ is equal to the leading coefficient of $\tilde{Q}$ up to a multiplication by a constant. Dividing (\ref{Q}) by $W^2(t)$, we get
\[f(x,y,t):=\left(x-\frac{X(t)}{W(t)}\right)^2+\left(y-\frac{Y(t)}{W(t)}\right)^2-d^2.\]
One can see that $\deg(W)\ge\max\{\deg(X),\deg(Y)\}$ iff $p_{\infty}$ is affine. Furthermore, if $\deg(W)\ge\max\{\deg(X),\deg(Y)\}$ then $\deg(\tilde{Q})=2\cdot\deg(W)$, and the leading coefficient of $\tilde{Q}$ with respect to $t$ is the product of the leading coefficient of $W^2(t)$, multiplied by $\lim\limits_{t\to \infty}f(x,y,t)$. But this coincides with the equation of the circle of radius $d$ centered at $P_{\infty}$. 

Finally, if $\deg(W)<\max\{\deg(X),\deg(Y)\}$, in which case $p_{\infty}$ is not affine, then the leading coefficient of $\tilde{Q}$ with respect to $t$ is a nonzero constant. 
\end{proof}

\begin{lemma}\label{rectamala}
The leading coefficient of $P$ with respect to $t$ vanishes iff $p_{\infty}$ is an affine point. Furthermore, the leading coefficient of $P$ with respect to $t$ vanishes over the line normal to $\mathcal{C}$ at $p_{\infty}$. 
\end{lemma}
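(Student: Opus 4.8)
The plan is to mimic the proof of Lemma~\ref{circulomalo}. Since $P=\tilde P/\beta(t)$ and $\beta(t)\in{\Bbb R}[t]$ is a polynomial in $t$ alone, the leading coefficient of $P$ with respect to $t$ coincides, up to a nonzero real constant, with that of
\[
\tilde P(x,y,t)=U(t)W(t)\,x+V(t)W(t)\,y-\bigl(U(t)X(t)+V(t)Y(t)\bigr),
\]
so it suffices to analyse $\mbox{lc}_t(\tilde P)$. Note that $\tilde P$ has degree $\le 1$ in $(x,y)$: its coefficients of $x$, $y$ and $1$ are the univariate polynomials $UW$, $VW$ and $-(UX+VY)$, and $\mbox{lc}_t(\tilde P)$ is obtained by reading off, in each of these three polynomials, the coefficient of the top power $t^N$ of $t$ that actually occurs in $\tilde P$.

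First I would dispose of the case where $p_\infty$ is \emph{not} affine; by the symmetry $x\leftrightarrow y$, $X\leftrightarrow Y$, $U\leftrightarrow V$ of $\tilde P$ one may assume $\deg(X)>\deg(W)$ and $\deg(X)\ge\deg(Y)$. A degree count — using that $\deg(U)=\deg(X)+\deg(W)-1$ because $\deg(X)\neq\deg(W)$, and that over ${\Bbb R}$ the leading coefficient of $UX+VY$ cannot vanish, since it equals up to a nonzero factor a sum $\alpha^2+\beta^2$ of squares of the leading coefficients $\alpha,\beta$ of $X,Y$ — shows that $\deg_t(UX+VY)$ strictly dominates both $\deg_t(UW)$ and $\deg_t(VW)$. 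Hence the coefficient of $t^N$ in $\tilde P$ comes solely from the $(x,y)$-free term, so $\mbox{lc}_t(\tilde P)$ is a nonzero constant and never vanishes.

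Then I would treat the case $p_\infty=(x_\infty,y_\infty)$ affine, i.e. $\deg(W)\ge\max\{\deg(X),\deg(Y)\}$. Set $M=\max\{\deg(U),\deg(V)\}$ (finite and $\ge0$ since $\phi$ is not constant), and let $(u,v)$ be the pair of coefficients of $t^M$ in $\bigl(U(t),V(t)\bigr)$, so $(u,v)\neq(0,0)$. Since $(\mathcal{X}'(t):\mathcal{Y}'(t))=(U(t):V(t))\to(u:v)$ as $t\to\infty$, the vector $(u,v)$ is the tangent direction of $\mathcal{C}$ at $p_\infty$, and the normal line at $p_\infty$ has equation $u(x-x_\infty)+v(y-y_\infty)=0$. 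Now $\deg_t(\tilde P)=\deg(W)+M$, and a short computation of the coefficient of $t^{\deg(W)+M}$ in $UW$, $VW$ and $UX+VY$ (these being $\gamma$ times $u$, $v$ and $u\,x_\infty+v\,y_\infty$ respectively, where $\gamma$ is the leading coefficient of $W$) gives
\[
\mbox{lc}_t(\tilde P)=\gamma\bigl(u(x-x_\infty)+v(y-y_\infty)\bigr),
\]
a nonzero multiple of the equation of the normal line at $p_\infty$; in particular it is genuinely of degree $1$ in $(x,y)$, so $\mbox{lc}_t(P)$ vanishes, and its zero locus is exactly that normal line. An alternative route to the same computation is to use $\tilde P=W^3 P^{\star}$ with $P^{\star}=\mathcal{X}'(t)(x-\mathcal{X}(t))+\mathcal{Y}'(t)(y-\mathcal{Y}(t))$, and to observe that the normal lines $P^{\star}(x,y,t)=0$ converge, as $t\to\infty$, to the normal line at $p_\infty$. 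I expect the main obstacle to be the bookkeeping in this affine case: one must keep careful track of the possible degree drops in $U$ and $V$ — for instance when $\deg(X)=\deg(Y)=\deg(W)$, where the leading terms of $X'W$ and $XW'$ cancel — in order to be sure that $\mbox{lc}_t(\tilde P)$ is a true degree-one polynomial in $(x,y)$ and that the line it defines is precisely the normal to $\mathcal{C}$ at $p_\infty$, and not some other line.
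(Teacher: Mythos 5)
Your proof is correct and follows essentially the same route as the paper: both reduce to analysing $\mathrm{lc}_t(\tilde P)$ directly, split into the cases $p_\infty$ affine and non-affine, and in the affine case identify the leading coefficient with $\gamma\bigl(u(x-x_\infty)+v(y-y_\infty)\bigr)$, the equation of the normal line at $p_\infty$. The ``obstacle'' you flag at the end is in fact already absorbed into your own setup: by defining $M=\max\{\deg U,\deg V\}$ using the \emph{actual} degrees of $U,V$ (after any cancellation in $X'W-XW'$ etc.), the pair $(u,v)$ of coefficients of $t^M$ is automatically nonzero and the computation of the coefficient of $t^{M+\deg W}$ goes through with no further case analysis.
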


\begin{proof}
First, note that the leading coefficient of $P$ with respect to $t$ is equal to the leading coefficient of $\tilde{P}$, up to a multiplication by a constant. 
Let $r:=\max\{\deg(X),\deg(Y)\}$,  $s:=\max\{\deg(U),\deg(V)\}$ and $w:=\deg(W)$. Furthermore, let $c(W,r)$ and $c(W,w)$ denote the coefficient of $t^r$ and $t^w$ respectively in $W(t)$; similarly for $X$ and $Y$. Also, let $c(U,s)$ denote the coefficient of $t^s$ in $U(t)$; similarly for $c(V,s)$. 

Now $p_{\infty}$ is affine iff $\deg(W)\ge r$. Furthermore, if $\deg(W)\ge r$ then the leading coefficient of $\tilde{P}$ with respect to $t$, is:
\[
\mathrm{lcoeff}(\tilde{P})=c(U,s)\cdot \left[c(W,w)\cdot x-c(X,w)\right]+c(V,s)\cdot \left[c(W,w)\cdot y-c(Y,w)\right].
\]
This expression can be written as the product of the constant $c(W,w)\cdot\sqrt{C^2(U,s)+C^2(V,s)}$ times the dot product of the following two vectors:
\[v_1=\left(\frac{c(U,s)}{\sqrt{c^2(U,s)+c^2(V,s)}},\frac{c(V,s)}{\sqrt{c^2(U,s)+c^2(V,s)}}\right),\]and 
\[
v_2=(x,y)-\left(\frac{c(X,w)}{c(W,w)},\frac{c(Y,w)}{c(W,w)}\right).
\]
Furthermore, one can see that 
\[
v_1=\displaystyle\lim_{t\to\infty}\left(\frac{U(t)}{\sqrt{U^2(t)+V^2(t)}},\frac{ V(t) }{ \sqrt{U^2(t)+V^2(t)}}\right),
\]
which is the limit of the unitary vector tangent to $\mathcal{C}$ at $p_{\infty}$. Additionally, denoting $p_{\infty}=(x_{\infty},y_{\infty})$, we also have that $v_2=(x,y)-(x_{\infty},y_{\infty})$, which represents the vector connecting $p_{\infty}$ and a generic point $(x,y)$. Therefore, $v_1\cdot v_2=0$ is the equation of the line normal to $\mathcal{C}$ at  $p_{\infty}$. 

If $\deg(W)< r$, in which case $p_{\infty}$ is not affine, one can easily see that
\[\mathrm{lcoeff}(\tilde{P})=-(r-w)C(W,w)(C(X,r)^2+C(Y,r)^2),\]
which is always nonzero. 
\end{proof}

Then we can finally prove Lemma \ref{l-aux}:

\begin{proof}(of Lemma \ref{l-aux})  From Lemma \ref{circulomalo} and Lemma \ref{rectamala}, we have that the leading coefficients of $P(x,y,t)$, $Q(x,y,t)$ with respect to $t$ simultaneously vanish at the points $(x_0,y_0)\in {\mathcal O}_d({\mathcal C})$ belonging, at the same time, to the circle of radius $d$ centered at $p_{\infty}$, and to the line normal to ${\mathcal C}$ at $p_{\infty}$. But these are exactly the points of ${\mathcal O}_d({\mathcal C})$ generated by $p_{\infty}$.
\end{proof}

\section{Appendix II: singularities of ${\mathcal O}_d({\mathcal C})$ coming from local singularities of ${\mathcal C}$.}

The goal of this appendix is to prove that the singularities of ${\mathcal O}_d({\mathcal C})$ coming from local singularities of ${\mathcal C}$ also satisfy Equation (\ref{eq-fundam}). In this sense, if $(x_0,y_0)\in \mathcal{O}_d(\mathcal{C})$ is generated by $t_0\in {\Bbb R}$, with $(U(t_0),V(t_0))=(0,0)$, and is a self-intersection, then $\mathbf{sres}_1(x(t_0,\alpha_0),y(t_0,\alpha_0))=0$. Observe that if $(x_0,y_0)\neq P_{\pm \infty}$ then  $\deg(G_{x_0,y_0}(t))>1$ and so, $\mathbf{sres}_1(x(t_0,\alpha_0),y(t_0,\alpha_0))=0$; otherwise, $(x_0,y_0)=P_{\pm \infty}$, $\mathbf{sres}_1(x(t_0,\alpha_0),y(t_0,\alpha_0))=0$ by Lemma \ref{l-aux} and Definition \ref{defsyl}.

Hence, in the rest of the appendix we will focus on the case when $(x_0,y_0)$ is not a self-intersection.
  
\begin{lemma}\label{singular}
 Let $(x_0,y_0)\in \mathcal{O}_d(\mathcal{C})$ be generated by $t_0\in {\Bbb R}$, and assume that $(U(t_0),V(t_0))=(0,0)$. Then the multiplicity of $t_0$ as a root of $G_{x_0,y_0}(t)$ is higher than 1 if and only if 
\[
\widehat{U}(t_0)\widehat{V}'(t_0)-\widehat{V}(t_0)\widehat{U}'(t_0)=0.
\]
\end{lemma}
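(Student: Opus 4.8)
The plan is to translate the multiplicity condition into the vanishing of a single derivative, exploiting that the hypothesis $(U(t_0),V(t_0))=(0,0)$ makes part of the situation degenerate for free. First I would recall that, since $(x_0,y_0)$ is generated by $t_0$, both $P(x_0,y_0,t)$ and $Q(x_0,y_0,t)$ vanish at $t_0$, and the multiplicity of $t_0$ as a root of $G_{x_0,y_0}(t)=\gcd(P(x_0,y_0,t),Q(x_0,y_0,t))$ is the minimum of its multiplicities as a root of each factor. I would then replace $P,Q$ by cleaner representatives. Since $(x_0,y_0)$ is affine, $W(t_0)\neq 0$, hence $\mu(t_0)=\gcd(W,X^2+Y^2)(t_0)\neq 0$; as $\tilde Q=W^2Q^\star$ with $Q^\star(x,y,t)=(x-\mathcal{X}(t))^2+(y-\mathcal{Y}(t))^2-d^2$ and $Q=\tilde Q/\mu$, the multiplicity of $t_0$ in $Q(x_0,y_0,t)$ equals that in $Q^\star(x_0,y_0,t)$. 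Similarly, factoring $\gcd(U,V)$ out of \eqref{P} gives $\tilde P=\gcd(U,V)\cdot\widehat P$ with $\widehat P(x,y,t):=\widehat U(t)(W(t)x-X(t))+\widehat V(t)(W(t)y-Y(t))$; comparing with \eqref{cont2} one gets $\mathrm{cont}_t(\widehat P)=\mu$, so $P=\widehat P/\mu$ and the multiplicity of $t_0$ in $P(x_0,y_0,t)$ equals that in $\widehat P(x_0,y_0,t)$.

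The key observation is that, under the hypothesis, $t_0$ is automatically a root of $Q^\star(x_0,y_0,t)$ of multiplicity at least two. Indeed $\partial Q^\star/\partial t$ is a constant multiple of $P^\star$ (see the proof of Proposition \ref{cor1}), and by \eqref{Pstar}, \eqref{UV} together with $(U(t_0),V(t_0))=(0,0)$ we have $P^\star(x_0,y_0,t_0)=\tilde P(x_0,y_0,t_0)/W^3(t_0)=\gcd(U,V)(t_0)\,\widehat P(x_0,y_0,t_0)/W^3(t_0)=0$, since $\gcd(U,V)(t_0)=0$. Consequently the multiplicity of $t_0$ in $G_{x_0,y_0}(t)$ exceeds $1$ if and only if it already exceeds $1$ in $\widehat P(x_0,y_0,t)$; and since $\widehat P(x_0,y_0,t_0)=\mu(t_0)P(x_0,y_0,t_0)=0$, this amounts to $\partial\widehat P/\partial t(x_0,y_0,t_0)=0$.

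It then remains to evaluate $\partial\widehat P/\partial t$ at $(x_0,y_0,t_0)$, for which I would use three facts. From $U(t_0)=V(t_0)=0$ and \eqref{UV} one gets $X'(t_0)=\mathcal{X}(t_0)W'(t_0)$ and $Y'(t_0)=\mathcal{Y}(t_0)W'(t_0)$, so $W'(t_0)x_0-X'(t_0)=W'(t_0)(x_0-\mathcal{X}(t_0))$ and similarly in $y$. Since $\gcd(\widehat U,\widehat V)=1$ we have $(\widehat U(t_0),\widehat V(t_0))\neq(0,0)$, so $\alpha_0:=\sqrt{\widehat U^2(t_0)+\widehat V^2(t_0)}\neq 0$. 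Finally, $\widehat P(x_0,y_0,t_0)=0$ divided by $W(t_0)$ says that $(x_0-\mathcal{X}(t_0),y_0-\mathcal{Y}(t_0))$ is orthogonal to $(\widehat U(t_0),\widehat V(t_0))$, while $\tilde Q(x_0,y_0,t_0)=0$ divided by $W^2(t_0)$ says it has norm $d$; hence $(x_0-\mathcal{X}(t_0),y_0-\mathcal{Y}(t_0))=\pm\frac{d}{\alpha_0}(\widehat V(t_0),-\widehat U(t_0))$. Substituting these relations into the expanded $\partial\widehat P/\partial t$, the terms carrying $W'(t_0)$ cancel, and one is left with
\[
\frac{\partial\widehat P}{\partial t}(x_0,y_0,t_0)=\mp\frac{d\,W(t_0)}{\alpha_0}\bigl(\widehat U(t_0)\widehat V'(t_0)-\widehat V(t_0)\widehat U'(t_0)\bigr).
\]
As $d>0$, $W(t_0)\neq0$ and $\alpha_0\neq0$, this vanishes precisely when $\widehat U(t_0)\widehat V'(t_0)-\widehat V(t_0)\widehat U'(t_0)=0$, which gives the stated equivalence.

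I expect the main obstacle to be the bookkeeping of the $t$-contents at $t_0$: one must notice that $\mu(t_0)\neq 0$ whereas $\gcd(U,V)(t_0)=0$, and realize that it is precisely this last vanishing that produces the ``free'' double root of $Q^\star$, so that the whole multiplicity question collapses onto $\widehat P$. Once this is in place, the computation of $\partial\widehat P/\partial t(x_0,y_0,t_0)$ is short, the only delicate point there being the sign coming from the exterior/interior offset alternative.
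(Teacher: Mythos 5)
Your proof is correct and follows essentially the same route as the paper: both reduce the question to whether $t_0$ is a multiple root of $\widehat{P}(x_0,y_0,t)$ (using $W(t_0)\neq 0$, the nonvanishing of the $t$-contents at $t_0$, and the relation $\partial Q^{\star}/\partial t=2P^{\star}$ together with $\gcd(U,V)(t_0)=0$ to account for the extra multiplicity on the $Q$ side), and then evaluate $\partial\widehat{P}/\partial t$ at $(x_0,y_0,t_0)$ using that the displacement vector equals $\pm\frac{d}{\alpha_0}\bigl(\widehat{V}(t_0),-\widehat{U}(t_0)\bigr)$. Your bookkeeping of the reduction (a guaranteed double root of $Q^{\star}$, rather than the paper's ``multiplicity $\ell+1$'' claim) is only a minor, if anything cleaner, variation of the same idea.
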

   
\begin{proof} We observe first that since $W(t_0)\neq 0$, $t_0$ is a root of $Q(x_0,y_0,t)$ of multiplicity $k$ iff $t_0$ is also a root of $\tilde{Q}(x_0,y_0,t)$ of the same multiplicity. Similarly, $t_0$ is a root of $P(x_0,y_0,t)$ of multiplicity $\ell$ iff $t_0$ is also a root of the polynomial \[\widehat{P}(x_0,y_0,t)=\frac{\tilde{P}(x,y,t)}{ \gcd(U(t),V(t))}\]of the same multiplicity. Furthermore, one can easily see that since $W(t_0)\neq 0$, if $t_0$ is a root of $P(x_0,y_0,t)$ of multiplicity $\ell$ then $t_0$ is also a root of $\tilde{Q}(x_0,y_0,t)$, and therefore of $Q(x_0,y_0,t)$, of multiplicity $\ell+1$. So it suffices to show that the condition in the statement of the lemma holds if and only if the multiplicity of $t_0$ as a root of $\widehat{P}(x_0,y_0,t)$ is higher than 1. In order to do this, notice that the derivative of 
$$
\widehat{P}(x_0,y_0,t)/W(t)=\widehat{U}(t)\left( x_0-\frac{X(t)}{W(t)} \right) +\widehat{V}(t)\left( y_0-\frac{Y(t)}{W(t)} \right)
$$
is:
\begin{equation}\label{coco}
\widehat{U}'(t)\left( x_0-\frac{X(t)}{W(t)} \right) +\widehat{V}'(t)\left( y_0-\frac{Y(t)}{W(t)} \right)-\frac{\widehat{U}(t)U(t)+\widehat{V}(t)V(t)}{W^2(t)}.
\end{equation}
Then $t=t_0$, where $W(t_0)\neq 0$, is a root of $\widehat{P}(x_0,y_0,t)$ of multiplicity higher than 1 if and only if \eqref{coco} vanishes at $t=t_0$. 
Since $U(t_0)=V(t_0)=0$ by hypothesis, the evaluation of \eqref{coco} at $t=t_0$ is equal to the dot product of the vectors:
\[\vec{a}=(\widehat{U}'(t_0),\widehat{V}'(t_0))\mbox{, } \vec{b}=\left(x_0-\frac{X(t_0)}{W(t_0)},y_0-\frac{Y(t_0)}{W(t_0)}\right).\]
Since $(x_0,y_0)\in\mathcal{O}_d(\mathcal{C})$ is generated by $t=t_0$, we have that $\vec{b}$ is parallel to 
\[\left(-\frac{\widehat{V}(t_0)}{\sqrt{\widehat{U}^2(t_0)+\widehat{V}^2(t_0)}},\frac{\widehat{U}(t_0)}{\sqrt{\widehat{U}^2(t_0)+\widehat{V}^2(t_0)}}\right).\]
Thus, \eqref{coco} vanishes at $t=t_0$ if and only if
\[\widehat{U}(t_0)\widehat{V}'(t_0)-\widehat{V}(t_0)\widehat{U}'(t_0)=0.\]
\end{proof}
The following corollary follows from Lemma \ref{singular}.
\begin{corollary}\label{cor-prev}
 Under the hypotheses of Lemma \ref{singular}, $\widehat{U}(t_0)\widehat{V}'(t_0)-\widehat{V}(t_0)\widehat{U}'(t_0)=0$ implies $\mathbf{sres}_1(x_0,y_0)=0$.
\end{corollary}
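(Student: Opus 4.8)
The plan is to derive the corollary from Lemma~\ref{singular} and then re-run, essentially verbatim, the subresultant specialization argument already used in the proof of Proposition~\ref{mainpropo}.

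First I would observe that, by Lemma~\ref{singular}, the hypothesis $\widehat{U}(t_0)\widehat{V}'(t_0)-\widehat{V}(t_0)\widehat{U}'(t_0)=0$ forces $t_0$ to be a root of $G_{x_0,y_0}(t)$ of multiplicity at least $2$. Since a linear polynomial has only simple roots, this immediately yields $\deg(G_{x_0,y_0}(t))>1$, which is precisely the inequality that drives Proposition~\ref{mainpropo}. The task then reduces to showing that $\deg(G_{x_0,y_0}(t))>1$ implies $\mathbf{sres}_1(x_0,y_0)=0$, and I would carry this out by distinguishing two cases, exactly as in Proposition~\ref{mainpropo}. If $(x_0,y_0)\neq P_{\pm\infty}$, then by Lemma~\ref{l-aux} the leading coefficients of $P(x,y,t)$ and $Q(x,y,t)$ with respect to $t$ do not vanish simultaneously at $(x_0,y_0)$, so at least one of $P,Q$ keeps its $t$-degree upon specializing $x=x_0$, $y=y_0$; by Theorem~\ref{th-2} the subresultant chain of $P,Q$ therefore specializes at $(x_0,y_0)$ up to, perhaps, a nonzero constant factor, and by Theorem~\ref{th-1} the first non-vanishing subresultant of the specialized pair equals $G_{x_0,y_0}(t)$ up to a constant. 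Since $\deg(G_{x_0,y_0}(t))>1$, that first non-vanishing subresultant has index at least $2$, so $\mathbf{Subres}_1$ specializes to the zero polynomial at $(x_0,y_0)$; in particular $\mathbf{sres}_1(x_0,y_0)=0$. If instead $(x_0,y_0)=P_{\pm\infty}$, then by Lemma~\ref{l-aux} both $\mbox{lc}_t(P)$ and $\mbox{lc}_t(Q)$ vanish at $(x_0,y_0)$, so the first column of the Sylvester matrix $\mathbf{Sylv}_1(P,n,Q,m)$ evaluated at $(x_0,y_0)$ is identically zero; since $\mathbf{sres}_1$ is, by Definition~\ref{defsyl} and the definition of the determinant polynomial, a minor of that matrix that contains its first column, we again get $\mathbf{sres}_1(x_0,y_0)=0$.

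The proof presents no genuinely new obstacle: the geometric and differential content is already packaged inside Lemma~\ref{singular}, and the passage from ``$G_{x_0,y_0}$ has a multiple root'' to ``$\mathbf{sres}_1$ vanishes'' is the same specialization-of-subresultants bookkeeping used in Proposition~\ref{mainpropo}. The only points that require care are the ones already handled there, namely checking that the hypotheses of Theorem~\ref{th-1} are met for the specialized pair $P(x_0,y_0,t)$, $Q(x_0,y_0,t)$ and treating the exceptional point $P_{\pm\infty}$ separately; I would simply refer back to the proof of Proposition~\ref{mainpropo} rather than repeat that discussion.
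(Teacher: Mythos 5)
Your proposal is correct and follows essentially the same route as the paper: Lemma \ref{singular} gives that $t_0$ is a multiple root of $G_{x_0,y_0}(t)$, hence $\deg(G_{x_0,y_0}(t))>1$, and the conclusion then follows by the same specialization-of-subresultants argument (Lemma \ref{l-aux}, Theorems \ref{th-1} and \ref{th-2}, with the $P_{\pm\infty}$ case handled via the vanishing leading coefficients in $\mathbf{Sylv}_1$) used in Proposition \ref{mainpropo}, which is exactly how the paper argues.
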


\begin{proof} By Lemma \ref{singular} we have that $\deg(G_{x_0,y_0}(t))>1$. Then one argues as in the proof of Proposition \ref{mainpropo}.
\end{proof}

We need some previous work in order to see the geometrical meaning of Lemma \ref{singular}. Let $T:{\Bbb R}^2\to {\Bbb R}^2$ be an orthogonal change of coordinates
\[T(x,y)=(ax+by+c_1,-bx+ay+c_2),\]
with $a^2+b^2=1.$ Let also 
\[
\tilde{\phi}_T(t)=T({\mathcal X}(t),{\mathcal Y}(t))=(\tilde{\mathcal X}(t),\tilde{\mathcal Y}(t)),
\]
with  $\tilde{\mathcal X}(t)=a{\mathcal X}(t)+b{\mathcal Y}(t)+c_1$, $\tilde{\mathcal Y}(t)=-b{\mathcal X}(t)+a{\mathcal Y}(t)+c_2$, and let 
\[
 T(U(t),V(t))=(\tilde{U}(t),\tilde{V}(t))\text{ and }T(\widehat{U}(t),\widehat{V}(t))=(\widehat{U}_T(t),\widehat{V}_T(t)).
\]
Hence, it is easy to see that 
\[
\tilde{\mathcal X}'(t)=\frac{\tilde{U}(t)}{W^2(t)}, \,\,\,
\tilde{\mathcal Y}'(t)=\frac{\tilde{V}(t)}{W^2(t)}. 
\]  Moreover, since
\[
\gcd(\tilde{U},\tilde{V})=\gcd(aU(t)+bV(t),-bU(t)+aV(t))=\gcd(U(t),V(t)),
\]
then 
\[\widehat{U}_T(t)=\tilde{U}(t)/\gcd(\tilde{U}(t),\tilde{V}(t)), \mbox{ }\widehat{V}_T(t)=\tilde{V}(t)/\gcd(\tilde{U}(t),\tilde{V}(t)).\]
Now we have the following instrumental lemma.
\begin{lemma} \label{orth}
Let \[\xi(t)={\mathcal X}'(t){\mathcal Y}''(t)-{\mathcal X}''(t){\mathcal Y}'(t),\]and let 
\[\tilde{\xi}_T(t)=\tilde{\mathcal X}'(t)\tilde{\mathcal Y}''(t)-\tilde{\mathcal X}''(t)\tilde{\mathcal Y}'(t).\]
The following statements are true:
\begin{itemize}
\item [(1)] $\xi(t)=\tilde{\xi}_T(t)$.
\item [(2)] $\widehat{U}_T(t)\widehat{V}_T'(t)-\widehat{U}_T'(t)\widehat{V}_T(t)=\widehat{U}(t)\widehat{V}'(t)-\widehat{U}'(t)\widehat{V}(t)$
\end{itemize}
\end{lemma}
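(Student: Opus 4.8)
The plan is to reduce both identities to the single fact that the linear part of $T$, namely the orthogonal matrix $R=\left(\begin{smallmatrix} a & b\\ -b & a\end{smallmatrix}\right)$, has $\det R=a^2+b^2=1$, together with multiplicativity of the determinant. The first thing I would record is the key observation that differentiation kills the translation part $(c_1,c_2)$ of $T$: from $\tilde{\mathcal{X}}(t)=a\mathcal{X}(t)+b\mathcal{Y}(t)+c_1$ and $\tilde{\mathcal{Y}}(t)=-b\mathcal{X}(t)+a\mathcal{Y}(t)+c_2$ one gets, for every $k\ge 1$, that the column vector $(\tilde{\mathcal{X}}^{(k)},\tilde{\mathcal{Y}}^{(k)})^{\top}$ equals $R\,(\mathcal{X}^{(k)},\mathcal{Y}^{(k)})^{\top}$. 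Likewise, as recalled just before the lemma, $T$ acts on the pairs $(U,V)$ and $(\widehat{U},\widehat{V})$ through its linear part $R$ alone (the translation being irrelevant for these tangent-direction data), so $(\widehat{U}_T,\widehat{V}_T)^{\top}=R\,(\widehat{U},\widehat{V})^{\top}$ and, $R$ being constant, $(\widehat{U}_T',\widehat{V}_T')^{\top}=R\,(\widehat{U}',\widehat{V}')^{\top}$ as well. The compatibility of this linear action with the $\gcd$-normalization, i.e. that $\widehat{U}_T=\tilde{U}/\gcd(\tilde{U},\tilde{V})$ and $\widehat{V}_T=\tilde{V}/\gcd(\tilde{U},\tilde{V})$, has already been checked before the statement using $\gcd(\tilde{U},\tilde{V})=\gcd(U,V)$.

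For part (1), I would introduce the $2\times 2$ matrix $M$ whose columns are $(\mathcal{X}',\mathcal{Y}')^{\top}$ and $(\mathcal{X}'',\mathcal{Y}'')^{\top}$, so that $\xi(t)=\det M$. By the observation above, the analogous matrix for $\tilde{\phi}_T$ is $RM$, whence $\tilde{\xi}_T(t)=\det(RM)=\det R\cdot\det M=\xi(t)$. For part (2), I would set $N$ to be the $2\times 2$ matrix with columns $(\widehat{U},\widehat{V})^{\top}$ and $(\widehat{U}',\widehat{V}')^{\top}$, so that $\widehat{U}(t)\widehat{V}'(t)-\widehat{U}'(t)\widehat{V}(t)=\det N$; the corresponding matrix built from $\widehat{U}_T,\widehat{V}_T$ is $RN$, and therefore $\widehat{U}_T(t)\widehat{V}_T'(t)-\widehat{U}_T'(t)\widehat{V}_T(t)=\det(RN)=\det R\cdot\det N=\widehat{U}(t)\widehat{V}'(t)-\widehat{U}'(t)\widehat{V}(t)$.

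I do not expect any genuine obstacle here: once the translation is seen to disappear under differentiation and the linear part of $T$ is recognized as a rotation, each identity collapses to a single application of $\det(RM)=\det R\,\det M$ with $\det R=1$. The only bookkeeping point worth double-checking is that $(\mathcal{X}^{(k)},\mathcal{Y}^{(k)})$ and $(\widehat{U},\widehat{V})$ transform under the \emph{same} matrix $R$ — rather than, say, one under $R$ and the other under $R^{\top}$ — which is immediate from the explicit formulas for $T$ and for $(\widehat{U}_T,\widehat{V}_T)$ displayed before the lemma.
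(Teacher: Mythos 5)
Your proof is correct, and for part (2) it follows a genuinely different route than the paper. For part (1) your argument is, in substance, what the paper's unexpanded ``direct computation'' amounts to: expanding $\tilde{\mathcal X}'\tilde{\mathcal Y}''-\tilde{\mathcal X}''\tilde{\mathcal Y}'$ is exactly the identity $\det(RM)=\det R\cdot \det M$ with $\det R=a^2+b^2=1$. For part (2), however, the paper does not apply the rotation argument to $(\widehat{U},\widehat{V})$ directly; instead it sets $\nu=\gcd(U,V)$, writes ${\mathcal X}'=\nu\widehat{U}/W^2$, ${\mathcal Y}'=\nu\widehat{V}/W^2$, deduces $\xi=\frac{\nu^2}{W^4}\bigl(\widehat{U}\widehat{V}'-\widehat{U}'\widehat{V}\bigr)$ together with the analogous formula for $\tilde{\xi}_T$, and then obtains (2) by invoking (1) and cancelling the nonzero rational factor $\nu^2/W^4$. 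You instead use the transformation law $(\widehat{U}_T,\widehat{V}_T)^{\top}=R\,(\widehat{U},\widehat{V})^{\top}$ --- which indeed follows from the pre-lemma observations $\tilde{U}=aU+bV$, $\tilde{V}=-bU+aV$ and $\gcd(\tilde{U},\tilde{V})=\gcd(U,V)$ --- and apply determinant multiplicativity once more, so that (1) and (2) become two instances of the same invariance fact and (2) no longer depends on (1). Your route is more uniform and avoids the (harmless) cancellation of $\nu^2/W^4$; the paper's route produces as a by-product the identity linking $\xi$ to the hatted Wronskian, which exhibits the curvature interpretation but is not used elsewhere. One point you handled correctly and is worth keeping explicit: the paper's notation $T(\widehat{U},\widehat{V})=(\widehat{U}_T,\widehat{V}_T)$ must be read as the action of the linear part $R$ alone, the translation being killed by differentiation, exactly as you interpreted it.
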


\begin{proof}
(1) The equality can be verified by a direct computation. (2) Let $\nu(t)=\gcd(U(t),V(t))$. Observe first that 
\[{\mathcal X}'(t)=\frac{U(t)}{W^2(t)}=\frac{\nu(t)\widehat{U}(t)}{W^2(t)},\mbox{ }{\mathcal Y}'(t)=\frac{V(t)}{W^2(t)}=\frac{\nu(t)\widehat{V}(t)}{W^2(t)},\]
and therefore
\[\xi(t)=\frac{\nu^2(t)\cdot (\widehat{U}(t)\widehat{V}'(t)-\widehat{U}'(t)\widehat{V}(t))}{W^4(t)}.\]
Furthermore, 
\[\tilde{\mathcal X}'(t)=\frac{\tilde{U}(t)}{W^2(t)}=\frac{\nu(t)\widehat{U}_T(t)}{W^2(t)},\mbox{ }\tilde{\mathcal Y}'(t)=\frac{\tilde{V}(t)}{W^2(t)}=\frac{\nu(t)\widehat{V}_T(t)}{W^2(t)},\]
and hence
\[\tilde{\xi}_T(t)=\frac{\nu^2(t)\cdot (\widehat{U}_T(t)\widehat{V}_T'(t)-\widehat{U}_T'(t)\widehat{V}_T(t))}{W^4(t)}.\]
Finally, using the statement (1), the statement (2) follows. 
\end{proof}
 
Before giving a geometric translation of Lemma \ref{singular}, we need an additional ingredient, namely the notion of \emph{place} recalled in Subsection \ref{sec-sing}. So let $S\in {\mathcal C}$ be a local, real affine singularity of ${\mathcal C}$, $S=\phi(t_0)$, $t_0\in {\Bbb R}$, and let us consider a coordinate system centered at $S$ where the $x$-axis coincides with the tangent to ${\mathcal D}$ at $S$ (see Figure \ref{fig:orth}). Then $S$ is the center of a place of ${\mathcal C}$ that can be written as 
\[{\mathcal P}(h)=(h^p,\beta_qh^q+\cdots),\]with $p,q\in {\Bbb N}$, $p\geq 2$, $q>p$. In \cite{AS07}, the problem of determining the places ${\mathcal P}_{\pm d}(h)$ of ${\mathcal O}_d({\mathcal C})$ generated by a given place ${\mathcal P}(h)$ of ${\mathcal C}$ under the offsetting transformation is addressed. Furthermore, from Theorem 7 in \cite{AS07}, one can see that if $S$ is a local singularity of ${\mathcal C}$, then it generates a local singularity of ${\mathcal O}_d({\mathcal C})$ if and only if $q-p\geq 2$. Now let us see that Lemma \ref{singular} implies that $S$ must generate a singularity of $\mathcal{O}_d(\mathcal{C})$. This is done in the following proposition, where we use the previous notation and the ideas in Lemma \ref{orth}.

\begin{figure}
\begin{center}
\includegraphics[scale=0.5]{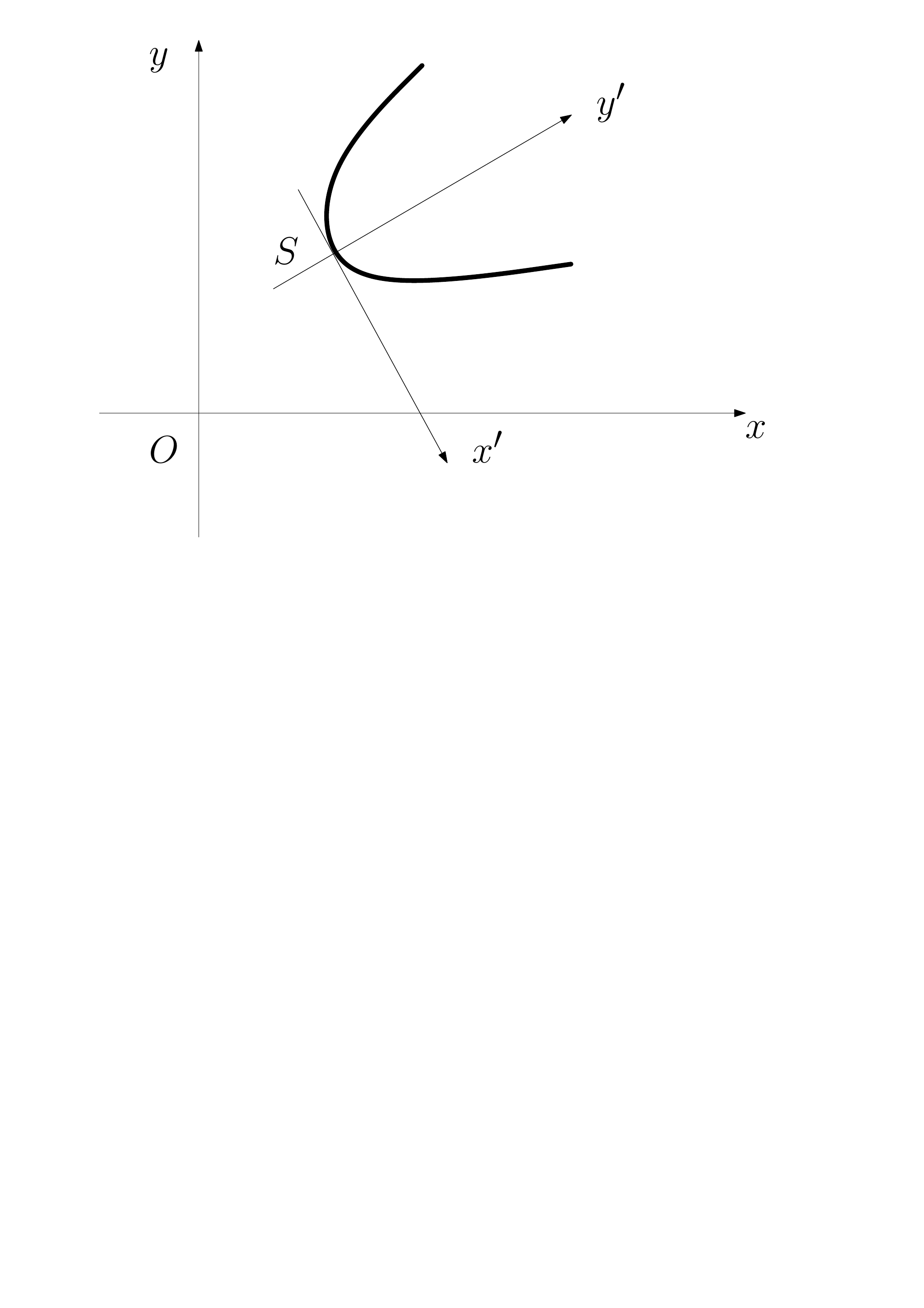}
\end{center}
\caption{Coordinate system for Proposition \ref{geom-mean}.}\label{fig:orth}
\end{figure}

\begin{proposition} \label{geom-mean}
Let $(x_0,y_0)\in \mathcal{O}_d(\mathcal{C})$ be generated by $t_0\in {\Bbb R}$, and assume that $(U(t_0),V(t_0))=(0,0)$. Then $(x_0,y_0)$ is a local singularity of $\mathcal{O}_d(\mathcal{C})$ iff $\widehat{U}(t_0)\widehat{V}'(t_0)-\widehat{V}(t_0)\widehat{U}'(t_0)=0.$
\end{proposition}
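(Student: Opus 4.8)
The plan is to reduce to a tangent-aligned coordinate system, translate the algebraic condition $\widehat{U}(t_0)\widehat{V}'(t_0)-\widehat{V}(t_0)\widehat{U}'(t_0)=0$ into a statement about orders of vanishing, and then invoke the offset place analysis of \cite{AS07} (the characterization recalled just before the statement).

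\textbf{Step 1: normalize coordinates.} First I would introduce the orthogonal change of coordinates $T$ considered in the paragraph preceding Lemma \ref{orth}, chosen so that $T(S)$ is the origin and the tangent to ${\mathcal C}$ at $S=\phi(t_0)$ is sent onto the $x$-axis. Since $T$ is an isometry, ${\mathcal O}_d(T({\mathcal C}))=T({\mathcal O}_d({\mathcal C}))$, so $(x_0,y_0)$ is a local singularity of ${\mathcal O}_d({\mathcal C})$ if and only if $T(x_0,y_0)$ is a local singularity of ${\mathcal O}_d(T({\mathcal C}))$; and by Lemma \ref{orth}(2) the quantity $\widehat{U}\widehat{V}'-\widehat{V}\widehat{U}'$ evaluated at $t_0$ is the same for $\phi$ and for $\tilde{\phi}_T$. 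Hence it suffices to prove the proposition for $\tilde{\phi}_T$, and from here on I would drop the subscript $T$ and simply assume ${\mathcal X}(t_0)={\mathcal Y}(t_0)=0$ with the tangent to ${\mathcal C}$ at $S$ equal to the $x$-axis; note that $W(t_0)\neq 0$ because $(x_0,y_0)$ is an affine point generated by $t_0$.

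\textbf{Step 2: orders of vanishing.} Next I would set $m=\mathrm{ord}_{t_0}{\mathcal X}(t)$ and $n=\mathrm{ord}_{t_0}{\mathcal Y}(t)$. Since $(U(t_0),V(t_0))=(0,0)$ forces $\phi'(t_0)=0$ we have $m\geq 2$, and since the limiting tangent direction at $S$ is $(1,0)$ we have $n>m$. Because $\phi$ is proper and, by the standing hypothesis of this appendix, $(x_0,y_0)$ is not a self-intersection, the localized parametrization $s\mapsto\phi(t_0+s)$ is a primitive parametrization of the branch of ${\mathcal C}$ centered at $S$; therefore it coincides with the standard place ${\mathcal P}(h)=(h^p,\beta_q h^q+\cdots)$ up to a reparametrization $h\mapsto s(h)$ with $s'(0)\neq 0$, and comparing lowest-degree terms yields $p=m$ and $q=n$. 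Using $U={\mathcal X}'W^2$, $V={\mathcal Y}'W^2$ and $W(t_0)\neq 0$, I get $\mathrm{ord}_{t_0}U=m-1$ and $\mathrm{ord}_{t_0}V=n-1$, hence $\mathrm{ord}_{t_0}\gcd(U,V)=m-1$; consequently $\widehat{U}(t_0)\neq 0$, while $\widehat{V}$ vanishes at $t_0$ to order exactly $n-m=q-p$.

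\textbf{Step 3: conclude.} Since $q-p\geq 1$ we have $\widehat{V}(t_0)=0$, so
\[
\widehat{U}(t_0)\widehat{V}'(t_0)-\widehat{V}(t_0)\widehat{U}'(t_0)=\widehat{U}(t_0)\widehat{V}'(t_0),
\]
and because $\widehat{U}(t_0)\neq 0$ this vanishes if and only if $\widehat{V}'(t_0)=0$, i.e. if and only if $\widehat{V}$ vanishes at $t_0$ to order at least $2$, i.e. if and only if $q-p\geq 2$. On the other hand, by Theorem 7 of \cite{AS07}, $S$ generates a local singularity of ${\mathcal O}_d({\mathcal C})$ — equivalently $(x_0,y_0)$ is a local singularity of ${\mathcal O}_d({\mathcal C})$ — precisely when $q-p\geq 2$. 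Combining the two equivalences and transporting back by Lemma \ref{orth}(2) as in Step 1 gives the statement. I expect the only genuinely delicate point to be the identifications $p=m$, $q=n$ of Step 2: one must be careful that the given parametrization localized at $t_0$ is \emph{primitive}, so that its multiplicity is the branch multiplicity $p$ occurring in the standard place — this is exactly where properness of $\phi$ together with the non-self-intersection hypothesis is used. The rest is elementary bookkeeping with orders of vanishing plus the cited results.
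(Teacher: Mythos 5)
Your proof is correct and mirrors the paper's: the same orthogonal normalization via Lemma \ref{orth}, the same reduction to the inequality $q-p\geq 2$, and the same appeal to Theorem 7 of \cite{AS07}. The only technical variation is that you track orders of vanishing of $\widehat{U}_T,\widehat{V}_T$ directly (establishing $p=m$, $q=n$, $\mathrm{ord}_{t_0}\widehat{V}_T=q-p$), whereas the paper works with the slope ratio $m(t)=\widehat{V}_T(t)/\widehat{U}_T(t)$, its derivative $m'(t_0)$, and the corresponding quantity $n'(0)$ from the place $(h^p,\beta_q h^q+\cdots)$; you helpfully make explicit the primitivity of the localized parametrization, which the paper uses only implicitly when identifying $n'(0)$ with $m'(t_0)$.
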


\begin{proof} $(\Leftarrow)$ Let $S=\phi(t_0)\in {\mathcal C}$, and let $x'$ be the tangent line to ${\mathcal C}$ at $S$ corresponding to $t\to t_0$. Furthermore, let $y'$ be the line perpendicular to $x'$ at $S$, and let $T$ be the orthogonal change of coordinates mapping the coordinate system $\{O;x,y\}$ onto the coordinate system $\{S;x',y'\}$ (see Figure \ref{fig:orth}). Also, let $(\tilde{\mathcal X}(t),\tilde{\mathcal Y}(t))=T({\mathcal X}(t),{\mathcal Y}(t))$; notice that $(\tilde{\mathcal X}(t),\tilde{\mathcal Y}(t))$ is a parametrization of ${\mathcal C}$ in the coordinate system $\{S;x',y'\}$. 
Since 
\[
(\tilde{\mathcal X}'(t),\tilde{\mathcal Y}'(t))= \frac{\gamma(t)}{W^2(t)} \cdot (\widehat{U}_T(t),\widehat{V}_T(t)),
\]
we have that 
\[
m(t):=\frac{\tilde{\mathcal Y}'(t)}{\tilde{\mathcal X}'(t)}=\frac{\widehat{V}_T(t)}{\widehat{U}_T(t)}.\]
Observe that the tangent to ${\mathcal C}$ at $S$ is parallel to $(\widehat{U}_T(t_0),\widehat{V}_T(t_0))\neq (0,0)$. Since this tangent is parallel to the $x'$-axis, $\widehat{U}_T(t_0)\neq 0$ and $m(t_0)$ is well-defined.  
Furthermore, 
\[
m'(t)=\frac{\widehat{V}_T'(t)\widehat{U}_T(t)-\widehat{U}'_T(t)\widehat{V}_T(t)}{\widehat{U}_T^2(t)},
\]
and by the condition (ii) and the statement (2) of Lemma \ref{orth}, we have that $m'(t_0)=0$. 

On the other hand, let us consider a place $(x(h),y(h))=(h^p,\beta_qh^q+\cdots)$ of ${\mathcal C}$ centered at $S$. Then we have that 
\begin{equation}\label{lan}
n(h):=\frac{y'(h)}{x'(h)}=\frac{q\beta_q h^{q-1}+\cdots}{ph^{p-1}}=\frac{q\beta_q}{p}h^{q-p}+\cdots
\end{equation}
Since $(x(h),y(h))$ also parametrizes ${\mathcal C}$ around $S$, and is written in the same coordinate system as $(\tilde{\mathcal X}(t),\tilde{\mathcal Y}(t))$, namely the coordinate system $\{S;x',y'\}$, for every $t$ sufficiently close to $t_0$ we can find $h$ such that 
\[
\frac{y'(h)}{x'(h)}=\frac{\tilde{\mathcal Y}'(t)}{\tilde{\mathcal X}'(t)}.
\]
Therefore $n'(0)=m'(t_0)=0$. However, $n'(0)=0$ implies $q-p\geq 2$, which is the condition (see Theorem 7 in \cite{AS07}) for $S$ to generate a singular point in $\mathcal{O}_d(\mathcal{C})$ .

$(\Rightarrow)$ If $(x_0,y_0)$ is a local singularity then $q-p\geq 2$, and therefore $n'(0)=m'(t_0)=0$; but then $\widehat{U}(t_0)\widehat{V}'(t_0)-\widehat{V}(t_0)\widehat{U}'(t_0)=0$.
\end{proof}
Finally, we can eventually prove the aimed result. 
\begin{corollary}\label{corol-wanted}
If $(x_0,y_0)\in {\mathcal O}_d({\mathcal C})$ is a non-isolated, real affine singularity of $\mathcal{O}_d(\mathcal{C})$, generated by $t_0\in {\Bbb R}$, with $(U(t_0),V(t_0))=(0,0)$, then there exists $\alpha_0\in {\Bbb R}$, $\alpha^2_0=\widehat{U}^2(t_0)+\widehat{V}^2(t_0)$, such that $(x_0,y_0)=(x(t_0,\alpha_0),y(t_0,\alpha_0))$ and $\mathbf{sres}_1(x(t_0,\alpha_0),y(t_0,\alpha_0))=0$.
\end{corollary}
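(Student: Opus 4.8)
The plan is to deduce the corollary by a short case analysis that reduces everything to results already established: the self-intersection case is exactly the discussion opening this appendix, and the remaining (genuinely local) case is Proposition \ref{geom-mean} combined with Corollary \ref{cor-prev}.

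First I would settle the existence of $\alpha_0$. Even though $(U(t_0),V(t_0))=(0,0)$, we have $(\widehat{U}(t_0),\widehat{V}(t_0))\neq (0,0)$, because $\gcd(U,V)$ absorbs at $t_0$ only the \emph{common} order of vanishing of $U$ and $V$, so at least one of $\widehat{U},\widehat{V}$ is nonzero there. Hence $\widehat{U}^2(t_0)+\widehat{V}^2(t_0)>0$ and $\alpha_0=\pm\sqrt{\widehat{U}^2(t_0)+\widehat{V}^2(t_0)}$ is a well-defined nonzero real number; arguing as in the first lines of the proof of Proposition \ref{mainpropo}, the fact that $(x_0,y_0)$ is generated by $t_0$ forces one of the two sign choices to satisfy $(x_0,y_0)=(x(t_0,\alpha_0),y(t_0,\alpha_0))$. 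I would fix such an $\alpha_0$; then $\mathbf{sres}_1(x(t_0,\alpha_0),y(t_0,\alpha_0))=\mathbf{sres}_1(x_0,y_0)$, and it only remains to show this equals $0$.

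Then I would split according to the classification of Subsection \ref{sec-sing}: a singular point of $\mathcal{O}_d(\mathcal{C})$ is either a self-intersection, or a local singularity that is not a self-intersection. In the first case I would reproduce the argument already indicated at the beginning of this appendix (and in Proposition \ref{mainpropo}): if $(x_0,y_0)\neq P_{\pm\infty}$, it is generated by at least two values of $t$, so $\deg(G_{x_0,y_0}(t))>1$; by Lemma \ref{l-aux} one of the $t$-leading coefficients of $P,Q$ does not vanish at $(x_0,y_0)$, so by Theorem \ref{th-2} the subresultant chain of $P,Q$ specializes at $(x_0,y_0)$ up to a nonzero constant, and by Theorem \ref{th-1} the specialized $G_{x_0,y_0}(t)$ is the first nonzero subresultant of the specialized chain; since its degree exceeds $1$, the subresultants of index $0$ and $1$ specialize to zero, hence $\mathbf{sres}_1(x_0,y_0)=0$. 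If instead $(x_0,y_0)=P_{\pm\infty}$ — which, being also generated by the finite value $t_0$, is automatically a self-intersection — Lemma \ref{l-aux} gives that both $t$-leading coefficients of $P$ and $Q$ vanish at $(x_0,y_0)$, and Definition \ref{defsyl} shows that $\mathbf{sres}_1(x_0,y_0)$ is assembled out of these leading coefficients, so it vanishes.

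In the complementary case, where $(x_0,y_0)$ is a local singularity of $\mathcal{O}_d(\mathcal{C})$ but not a self-intersection, I would simply apply the ``$\Rightarrow$'' implication of Proposition \ref{geom-mean}: since $(U(t_0),V(t_0))=(0,0)$, being a local singularity of $\mathcal{O}_d(\mathcal{C})$ forces $\widehat{U}(t_0)\widehat{V}'(t_0)-\widehat{V}(t_0)\widehat{U}'(t_0)=0$, and then Corollary \ref{cor-prev} yields $\mathbf{sres}_1(x_0,y_0)=0$. Since the two cases are exhaustive, this finishes the proof. I do not expect any genuine obstacle in this argument: the substantive work has already been done in Lemma \ref{singular} and Proposition \ref{geom-mean}, and the corollary is essentially an assembly. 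The only points that deserve a line of justification are the non-vanishing of $(\widehat{U}(t_0),\widehat{V}(t_0))$ (needed for $\varphi(t_0,\alpha_0)$ to make sense), the exhaustiveness of the self-intersection / local-singularity dichotomy for singular points, and the observation that a point equal to $P_{\pm\infty}$ yet generated by a finite $t_0$ must be a self-intersection.
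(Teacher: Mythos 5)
Your proof is correct and follows exactly the same two-case decomposition as the paper's own proof: the self-intersection case is discharged by the observations at the start of the appendix (with the $P_{\pm\infty}$ sub-case handled via Lemma~\ref{l-aux} and Definition~\ref{defsyl}), and the non-self-intersection local-singularity case is handled by combining the $(\Rightarrow)$ direction of Proposition~\ref{geom-mean} with Corollary~\ref{cor-prev}. The only difference is that you spell out details the paper leaves implicit (existence and well-definedness of $\alpha_0$ via $(\widehat{U}(t_0),\widehat{V}(t_0))\neq(0,0)$, and the exhaustiveness of the dichotomy), which is fine but not a new route.
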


\begin{proof} If $(x_0,y_0)$ is a self-intersection of $\mathcal{O}_d(\mathcal{C})$, the result follows from the observations at the beginning of the appendix. If $(x_0,y_0)$ is a local singularity of $\mathcal{O}_d(\mathcal{C})$, then the result follows from Corollary \ref{cor-prev} and Proposition \ref{geom-mean}.
\end{proof}

\newpage
\section{Appendix III: structure of $\mbox{Res}_t(P(x,y,t),Q(x,y,t))$.}

From Section \ref{subsec-off} we know that the polynomial \[H(x,y)=\mbox{Res}_t(P(x,y,t),Q(x,y,t))\] can be written as $H(x,y)=F(x,y)\cdot G(x,y)$, where $F(x,y)=(f(x,y))^r$, $f(x,y)$ is an irreducible polynomial implicitly representing ${\mathcal O}_d({\mathcal C})$\footnote{Recall that since ${\mathcal O}_d({\mathcal C})$ is irreducible by hypothesis, then its implicit equation consists of just one irreducible factor}, and $G(x,y)$ is the product of all the extraneous factors. Our goal is to prove that, under the hypothesis that ${\mathcal O}_d({\mathcal C})$ is simple and ${\mathcal C}$ is properly parametrized, we have $r=1$, i.e. the component of $H(x,y)$ corresponding to ${\mathcal O}_d({\mathcal C})$ has multiplicity 1. 

Let us consider the set ${\mathcal A}$ of the $y_0$s satisfying some of the following conditions. Here we use the notation $P_{\pm \infty}=(x_{\pm \infty},y_{\pm \infty})$.

\begin{itemize}
\item [(1)] The intersection of the line $y=y_0$ with ${\mathcal O}_d({\mathcal C})$ contains some point also belonging to the curve $G(x,y)=0$.
\item [(2)] The leading coefficients of $P(x,y,t)$ and $Q(x,y,t)$ with respect to $t$ identically vanish when $y=y_0$. 
\item [(3)] $y_0= y_{\pm \infty}$.
\item [(4)] The line $y=y_0$ is tangent to ${\mathcal O}_d({\mathcal C})$.
\item [(5)] The line $y=y_0$ contains either a local singularity of ${\mathcal O}_d({\mathcal C})$, or a point of ${\mathcal O}_d({\mathcal C})$ simultaneously generated by different values\footnote{Notice that these last points correspond to self-intersections of ${\mathcal O}_d({\mathcal C})$} of $t$, i.e. by different points $p=\phi(t)$. 
\end{itemize}

Then we have the following result.

\begin{lemma} \label{finite}
${\mathcal A}$ is a finite set.
\end{lemma}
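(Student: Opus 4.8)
The plan is to write $\mathcal{A}=\mathcal{A}_1\cup\mathcal{A}_2\cup\mathcal{A}_3\cup\mathcal{A}_4\cup\mathcal{A}_5$, where $\mathcal{A}_i$ is the set of $y_0$ satisfying condition $(i)$, and to verify that each $\mathcal{A}_i$ is finite; a finite union of finite sets is finite. Throughout one uses that, under the standing hypotheses, $\mathcal{O}_d(\mathcal{C})$ is an irreducible, hence reduced, plane curve, so that $f(x,y)$ is irreducible, the product $G(x,y)$ of the extraneous factors of $H(x,y)$ is coprime with $f$ (each extraneous factor being, by construction, a component of $H$ which is not a multiple of $f$), and $\mathcal{O}_d(\mathcal{C})$ has only finitely many singular points.

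For $\mathcal{A}_1$: since $\gcd(f,G)=1$, the plane curves $\{f=0\}$ and $\{G=0\}$ share no component, so $\{f=0\}\cap\{G=0\}$ is a finite set of points (empty if $G$ is a nonzero constant); hence its projection onto the $y$-axis, which contains $\mathcal{A}_1$, is finite. For $\mathcal{A}_2$: $\mbox{lc}_t(P)(x,y)$ and $\mbox{lc}_t(Q)(x,y)$ are nonzero elements of $\mathbb{R}[x,y]$, and for any nonzero $R(x,y)=\sum_j r_j(y)\,x^j$ the identity $R(x,y_0)\equiv 0$ (in $x$) forces $r_j(y_0)=0$ for all $j$, hence $y_0\in Z(r_{j_0})$ for any chosen $j_0$ with $r_{j_0}\neq 0$, which is finite; applying this to $\mbox{lc}_t(Q)$ already exhibits a finite superset of $\mathcal{A}_2$. (By Lemmas \ref{circulomalo} and \ref{rectamala}, $\mbox{lc}_t(Q)$ and $\mbox{lc}_t(P)$ equal, up to nonzero constants, the equation of the circle of radius $d$ centred at $p_\infty$ and of the normal line at $p_\infty$ when $p_\infty$ is affine, and nonzero constants otherwise; since the circle is not a union of horizontal lines, in fact $\mathcal{A}_2=\emptyset$.) For $\mathcal{A}_3$: obviously $\#\mathcal{A}_3\le 2$, and $\mathcal{A}_3=\emptyset$ if $P_{\pm\infty}$ are not affine.

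For $\mathcal{A}_4$: excluding the degenerate case in which $\mathcal{O}_d(\mathcal{C})$ is a single line (easily ruled out under our hypotheses, the offset of a line being a pair of parallel lines), the polynomial $f$ genuinely involves $x$ and, being irreducible, is square-free as an element of $\mathbb{R}(y)[x]$, so $D(y):=\mbox{Disc}_x(f)$ is a nonzero polynomial. A horizontal line $y=y_0$ tangent to $\mathcal{O}_d(\mathcal{C})$ at a smooth point meets the curve there with multiplicity at least $2$, whence $y_0$ is a root of $D(y)$; a tangency at a singular point forces $y_0$ to be the ordinate of one of the finitely many singular points. Hence $\mathcal{A}_4$ is finite. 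For $\mathcal{A}_5$: every local singularity and every self-intersection of $\mathcal{O}_d(\mathcal{C})$ is a singular point of the reduced curve $\mathcal{O}_d(\mathcal{C})$, of which there are finitely many, so $\mathcal{A}_5$ is contained in the finite set of their ordinates.

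The only genuinely delicate points are $(1)$, where one must pin down that "extraneous factor" means a component of $H$ that is not a multiple of $f$, so that $\gcd(f,G)=1$, and $(4)$, where "tangent" must be read so that it is detected by the vanishing of the nonzero discriminant $D(y)$ together with passage through one of the finitely many singularities; granting these, each $\mathcal{A}_i$ is manifestly finite and the lemma follows.
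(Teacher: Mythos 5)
Your proof is correct and follows the same overall decomposition as the paper, which simply asserts that conditions (1)--(4) are ``clearly'' finite and devotes its only explicit argument to condition (5). You usefully spell out the routine finiteness arguments for (1)--(4) (extraneous factors are coprime to the irreducible $f$, so the intersection is $0$-dimensional; the leading coefficients in $t$ are nonzero polynomials, hence vanish identically on at most finitely many horizontal lines; there are at most two $y_{\pm\infty}$; and the discriminant $\mathrm{Disc}_x(f)$ is a nonzero univariate polynomial once the degenerate case of a horizontal-line offset is excluded). For (5) you argue ``multi-generated point $\Rightarrow$ self-intersection $\Rightarrow$ singular point of a reduced curve $\Rightarrow$ finitely many,'' whereas the paper argues directly from the definition of \emph{simple} (almost every offset point is generated by a single point of ${\mathcal C}$, and a proper Zariski-closed subset of an irreducible plane curve is finite) together with properness of $\phi$; both are valid, and the identification you make is exactly the content of the paper's footnote to condition (5). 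The one spot you might tighten: when invoking $\mathrm{Disc}_x(f)$ you should also discard the finitely many $y_0$ where $\mathrm{lc}_x(f)$ vanishes, so that specialization of the discriminant behaves -- a minor, standard adjustment.
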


\begin{proof} It is clear that there are finitely many $y_0$s satisfying (1), (2), (3) and (4). So let us see that there are also finitely many $y_0$s satisfying (5). For this purpose, note that ${\mathcal O}_d({\mathcal C})$ has finitely many local singularities. Furthermore, since ${\mathcal O}_d({\mathcal C})$ is simple by hypothesis and ${\mathcal C}$ is properly parametrized, then there are finitely many points of ${\mathcal O}_d({\mathcal C})$ generated by different values of the paramemeter $t$. Therefore, there are also finitely many $y_0$s satisfying (5).
\end{proof}

Therefore, a \emph{generic} $y_0$ does not satify any condition (1)-(5). This is crucial in the next result.

\begin{proposition} \label{libre}
Let $H(x,y)=F(x,y)\cdot G(x,y)$. Then $F(x,y)$ is a square-free polynomial.
\end{proposition}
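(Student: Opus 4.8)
The plan is to show that $F(x,y)$ has no repeated irreducible factor by reducing everything to a single generic horizontal line $y=y_0$ with $y_0\notin{\mathcal A}$, and studying the univariate polynomial $H(x,y_0)=\mathrm{Res}_t(P(x,y_0,t),Q(x,y_0,t))$. Since $F(x,y)=(f(x,y))^r$ with $f$ irreducible, it suffices to prove $r=1$; and since $r$ is the multiplicity with which $f$ divides $H$, it is enough to exhibit one value $y_0$ for which $f(x,y_0)$ is a genuine factor of $H(x,y_0)$ (i.e. $f(x,y_0)\not\equiv 0$) and for which $H(x,y_0)$ is \emph{square-free as a polynomial in $x$}, apart from the (known, controlled) contribution of $G(x,y_0)$. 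The point of excluding the conditions (1)--(5) is precisely to guarantee: by (2), the degree of $H(x,y_0)$ in $x$ equals $\deg_x H$, so no specialization anomaly occurs and $H(x,y_0)=F(x,y_0)G(x,y_0)$ with both factors specializing properly; by (1), the zero sets of $f(x,y_0)$ and $G(x,y_0)$ are disjoint, so a repeated root of $H(x,y_0)$ must already be a repeated root of $F(x,y_0)$ or of $G(x,y_0)$ separately; hence $r\ge 2$ would force $f(x,y_0)^2\mid H(x,y_0)$, i.e. every root $x_0$ of $f(x,y_0)$ would be a multiple root of $H(x,y_0)$.

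Next I would translate ``$x_0$ is a multiple root of $H(x,y_0)$'' into geometry on the offset via the resultant. A simple root $x_0$ of $H(x,y_0)$ corresponds, by the standard theory of resultants (as already invoked in Section~\ref{subsec-off} and Lemma~\ref{l-aux}), to a point $(x_0,y_0)\in{\mathcal O}_d({\mathcal C})$ for which there is a \emph{unique} $t_0$ with $P(x_0,y_0,t_0)=Q(x_0,y_0,t_0)=0$, this $t_0$ being a simple common root, i.e. $\deg(G_{x_0,y_0}(t))=1$ and the line $y=y_0$ meeting ${\mathcal O}_d({\mathcal C})$ transversally there. Conversely, the only ways $x_0$ can fail to be a simple root of $H(x,y_0)$ are: the fiber over $(x_0,y_0)$ is not a single reduced point of the $(x,t)$-curve $\{P=Q=0\}$ — which, after discarding the leading-coefficient locus by condition (2), means either $\deg(G_{x_0,y_0}(t))>1$, or two distinct points of that curve project to the same $(x_0,y_0)$, or the projection is ramified at $(x_0,y_0)$ (the line $y=y_0$ is tangent). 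By Remark~\ref{conv-false}, $\deg(G_{x_0,y_0}(t))>1$ forces $(x_0,y_0)$ to be a local singularity or a self-intersection of ${\mathcal O}_d({\mathcal C})$ (or an intersection with a spurious factor, excluded by (1)); tangency is excluded by (4); self-intersections and local singularities on the line $y=y_0$ are excluded by (5); and $(x_0,y_0)=P_{\pm\infty}$ is excluded by (3). So for a generic $y_0$, \emph{every} root of $f(x,y_0)$ coming from the offset is a simple root of $H(x,y_0)$, contradicting $f(x,y_0)^2\mid H(x,y_0)$ unless $\deg_x f(\cdot,y_0)=0$ — impossible since $f$ is a nonconstant irreducible polynomial not dividing $W$ and ${\mathcal O}_d({\mathcal C})$ is not a horizontal line (being an offset to a rational curve, its offsetting construction precludes this degenerate case). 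Hence $r=1$ and $F$ is square-free.

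The step I expect to be the main obstacle is making airtight the dictionary ``simple root of $H(x,y_0)$ $\Longleftrightarrow$ reduced, unramified, single point of the fiber,'' since resultants only cleanly detect the \emph{existence} of a common root and the relevant multiplicity statement requires either the subresultant specialization machinery (Theorems~\ref{th-1}, \ref{th-2}, already available, together with Lemma~\ref{l-aux} to ensure no degree drop) or a careful local analysis of the projection $\{P=Q=0\}\to\{y=y_0\}$ in the $(x,t)$-plane. I would phrase this via the subresultant chain: for generic $(x_0,y_0)$ the first nonzero subresultant is ${\mathcal G}(x_0,y_0,t)$ of degree $1$ in $t$, which by Theorem~\ref{deg-1} is the generic situation, and then relate the order of vanishing of $H$ at $x_0$ along $y=y_0$ to $\deg_t G_{x_0,y_0}$ plus the local intersection multiplicity of the line with ${\mathcal O}_d({\mathcal C})$. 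Bookkeeping the interaction with the spurious factor $G(x,y)$ — in particular that condition (1) really does separate the two loci on the chosen line — is the other delicate point, and is exactly what Lemma~\ref{finite} is there to buy.
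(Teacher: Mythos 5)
Your plan is essentially the paper's own proof: restrict to a generic line $y=y_0$ avoiding conditions (1)--(5), use (2) so the resultant specializes properly, use (1) and (4)--(5) to separate the roots of $f(x,y_0)$ from $G(x,y_0)$ and keep them simple on the offset side, and then rule out multiple roots of $H(x,y_0)$ by the same case analysis (two generating parameter values, a multiple common root forcing a local singularity via Lemma \ref{lem-1}, or a common vertical asymptote tied to $P_{\pm\infty}$). The ``dictionary'' you flag as the main obstacle---relating multiple roots of the specialized resultant to these fiber degeneracies---is precisely the one step the paper does not prove by hand but cites as Proposition 5 of \cite{Buse}.
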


\begin{proof} Since $F(x,y)=(f(x,y))^r$, we must prove that $r=1$. In order to do this, observe first that $f(x,y)$ cannot be of the form $\alpha\cdot y$, with $\alpha\in {\Bbb R}$. Indeed, in that case ${\mathcal C}$ consists of a pair of parallel lines, which does not correspond to a rational curve. Therefore in order to prove the assertion, it suffices to prove that for a generic $y_0$, $F(x,y_0)$ is square-free. For this purpose let $y=y_0$ be generic; hence, $y_0$ does not satisfy any condition (1)-(5). In particular, the leading coefficients of $P(x,y,t)$ and $Q(x,y,t)$ do not identically vanish for $y=y_0$, so $\mbox{Res}_t(P(x,y,t),Q(x,y,t))$ specializes properly, i.e. $H(x,y_0)=\mbox{Res}_t(P(x,y_0,t),Q(x,y_0,t))$ (see Theorem \ref{th-2} in Section \ref{sec-subres}). Since $y_0$ does not satisfy condition (1), the line $y=y_0$ does not intersect $H(x,y)=0$ in any point both belonging to ${\mathcal O}_d({\mathcal C})$ and to the curve $G(x,y)=0$. Moreover $y_0$ does not satisfy condition (4) either, and therefore we have that 
\[H(x,y_0)=(x-x_1)^r\cdots (x-x_n)^r\cdot G(x,y_0),\]where $G(x_i,y_0)\neq 0$ for $i=1,\ldots,n$. In other words, the intersection of the line $y=y_0$ with the curve $F(x,y)=0$ consists of the points $(x_i,y_0)$, $i=1,\ldots,n$, and all these points have the same multiplicity of intersection with $y=y_0$, namely $r$. Now assume that $r>1$. From Proposition 5 in \cite{Buse}, we have the following possibilities:
\begin{itemize}
\item [(i)] For any $i=1,\ldots,n$ there is $y_1\neq y_0$ and $t_0,t_1$ with $P(x_i,y_0,t_0)=Q(x_i,y_0,t_0)=0$ and $P(x_i,y_0,t_1)=Q(x_i,y_0,t_1)=0$. But this implies that $(x_i,y_0)\in{\mathcal O}_d({\mathcal C})$ is simultaneously generated by $t_0$ and $t_1$, which cannot happen because $y_0$ does not satisfy condition (5). 
\item [(ii)] There is some $i=1,\ldots,n$ such that $P(x_i,y_0,t)$ and $Q(x_i,y_0,t)$ share a root $t_0$ of multiplicity at least 2. However, in that case $P_t$ vanishes at the point $(x_i,y_0,t_0)$, which by Lemma \ref{lem-1} implies that $(x_i,y_0)$ is a local singularity of ${\mathcal O}_d({\mathcal C})$. But this cannot happen because $y_0$ does not satisfy condition (5).  
\item [(iii)] The line $x=x_i$ is a common vertical asymptote of the curves (defined on the $xt$-plane) $P(x,y_0,t)=0$, $Q(x,y_0,t)=0$. But this cannot happen because $y_0$ does not satisfy condition (3), and hence $y_0\neq y_{\pm \infty}$.
\end{itemize}
So we conclude that $r=1$. 
\end{proof}


\end{document}